\numberwithin{equation}{section}
\newcommand{\Fg}{\mathfrak{g}}
\newcommand{\Fh}{\mathfrak{h}}
\newcommand{\Fb}{\mathfrak{b}}
\newcommand{\FI}{\mathfrak{I}}
\newcommand{\BZ}{\mathbb{Z}}
\newcommand{\BQ}{\mathbb{Q}}
\newcommand{\BR}{\mathbb{R}}
\newcommand{\BC}{\mathbb{C}}
\newcommand{\BX}{\mathbb{X}}
\newcommand{\BU}{\mathbb{U}}
\newcommand{\CL}{\mathcal{L}}
\newcommand{\CB}{\mathcal{B}}
\newcommand{\sD}{\mathsf{D}}
\newcommand{\sT}{\mathsf{T}}
\newcommand{\ve}{\varepsilon}
\newcommand{\vp}{\varphi}
\newcommand{\vpi}{\varpi}
\newcommand{\eps}{\epsilon}
\newcommand{\ba}{\mathbf{a}}
\newcommand{\be}{\mathbf{e}}
\newcommand{\bp}{\mathbf{p}}
\newcommand{\bq}{\mathbf{q}}
\newcommand{\bx}{\mathbf{x}}
\newcommand{\bw}{\mathbf{w}}
\newcommand{\bzero}{\bm{0}}
\newcommand{\brho}{\bm{\rho}}
\newcommand{\bchi}{\bm{\chi}}
\newcommand{\q}{\mathsf{q}}
\newcommand{\lng}{w_{\circ}}
\newcommand{\rr}{\Delta_{\af}}
\newcommand{\prr}{\Delta_{\af}^{+}}
\newcommand{\mcr}[1]{\lfloor #1 \rfloor}
\newcommand{\xcr}[1]{\lceil #1 \rceil}
\newcommand{\edge}[1]{\xrightarrow{\,#1\,}}
\newcommand{\si}{\frac{\infty}{2}}
\newcommand{\sell}{\ell^{\si}}
\newcommand{\sil}{\prec}
\newcommand{\sile}{\preceq}
\newcommand{\sig}{\succ}
\newcommand{\sige}{\succeq}
\newcommand{\SLS}{\mathbb{B}^{\si}}
\newcommand{\QLS}{\mathrm{QLS}}
\newcommand{\EQB}{\mathrm{EQB}}
\newcommand{\cl}{\mathrm{cl}}
\newcommand{\af}{\mathrm{af}}
\newcommand{\wt}{\mathop{\rm wt}\nolimits}
\newcommand{\fin}{\mathop{\rm fin}\nolimits}
\newcommand{\fwt}{\mathop{\rm fin}\nolimits}
\newcommand{\qwt}{\mathop{\rm nul}\nolimits}
\newcommand{\gch}{\mathop{\rm gch}\nolimits}
\newcommand{\Hom}{\mathop{\rm Hom}\nolimits}
\newcommand{\Img}{\mathop{\rm Image}\nolimits}
\newcommand{\Par}{\mathop{\rm Par}\nolimits}
\newcommand{\Conn}{\mathop{\rm Conn}\nolimits}
\newcommand{\Kg}{\mathbb{K}}
\newcommand{\Kl}{\mathbf{K}}
\newcommand{\Kc}[1]{\SLS_{\sige #1;\not> #1}}
\newcommand{\pair}[2]{\langle #1,\,#2 \rangle}
\newcommand{\ol}[1]{\overline{#1}}
\newcommand{\ti}[1]{\widetilde{#1}}
\newcommand{\xw}{\xcr{w}}
\newcommand{\Iw}{I_{\xcr{w}}}
\newcommand{\Qvw}{Q_{\Iw}^{\vee,+}}
\newcommand{\SB}{\mathrm{BG}^{\si}(W_{\af})}
\newcommand{\QB}{\mathrm{QBG}(W)}
\newcommand{\J}{S}
\newcommand{\Jc}{I \setminus \J}
\newcommand{\QJ}{Q_{\J}}
\newcommand{\QJv}{Q_{\J}^{\vee}}
\newcommand{\QJvp}{Q_{\J}^{\vee,+}}
\newcommand{\DeJ}{\Delta_{\J}}
\newcommand{\PJ}{\Pi^{\J}}
\newcommand{\WJ}{W_{\J}}
\newcommand{\WJu}{W^{\J}}
\newcommand{\SBJ}{\mathrm{BG}^{\si}((\WJu)_{\af})}
\newcommand{\SBa}{\mathrm{BG}^{\si}_{a\lambda}((\WJu)_{\af})}
\newcommand{\SBb}[1]{\mathrm{BG}^{\si}_{#1\lambda}((\WJu)_{\af})}
\newcommand{\QBJ}{\mathrm{QBG}(\WJu)}
\newcommand{\QBa}{\mathrm{QBG}_{a\lambda}(\WJu)}
\newcommand{\QBb}[1]{\mathrm{QBG}_{#1\lambda}(\WJu)}
\newcommand{\kq}[1]{K^{\J}_{#1}}
\newcommand{\kp}[1]{K_{#1}}
\newcommand{\kw}{K_{\xcr{w}}}
\theoremstyle{plain}
\newtheorem{thm}{Theorem}[section]
\newtheorem{lem}[thm]{Lemma}
\newtheorem{prop}[thm]{Proposition}
\newtheorem{ithm}{Theorem}
\theoremstyle{definition}
\newtheorem{dfn}[thm]{Definition}
\theoremstyle{remark}
\newtheorem{rem}[thm]{Remark}
\newenvironment{enu}{%
 \begin{enumerate}%
}{\end{enumerate}}
\begin{document}

\setlength{\baselineskip}{18pt}

\title{\Large\bf 
Level-zero van der Kallen modules \\ and specialization of 
\\ nonsymmetric Macdonald polynomials at $t = \infty$%
\footnote{Key words and phrases: 
semi-infinite Lakshmibai-Seshadri path, 
nonsymmetric Macdonald polynomial, extremal weight module \newline
Mathematics Subject Classification 2010: Primary 17B37; Secondary 14N15, 14M15, 33D52, 81R10. 
}%
}
\author{%
Satoshi Naito \\ 
 \small Department of Mathematics, Tokyo Institute of Technology, \\
 \small 2-12-1 Oh-okayama, Meguro-ku, Tokyo 152-8551, Japan \\
 \small (e-mail: {\tt naito@math.titech.ac.jp}) \\[3mm]
and \\[3mm]
Daisuke Sagaki \\ 
 \small Institute of Mathematics, University of Tsukuba, \\
 \small 1-1-1 Tennodai, Tsukuba, Ibaraki 305-8571, Japan \\
 \small (e-mail: {\tt sagaki@math.tsukuba.ac.jp})
}
\date{}
\maketitle

%
\begin{abstract} \setlength{\baselineskip}{13pt}

Let $\lambda \in P^{+}$ be a level-zero dominant integral weight, 
and $w$ an arbitrary coset representative of minimal length for the cosets 
in $W/W_{\lambda}$, where $W_{\lambda}$ is the stabilizer of $\lambda$ in a finite Weyl group $W$.
In this paper, we give a module $\Kg_{w}^{-}(\lambda)$ over 
the negative part of a quantum affine algebra whose graded character is 
identical to the specialization at $t = \infty$ of 
the nonsymmetric Macdonald polynomial $E_{w \lambda}(q,\,t)$ 
multiplied by a certain explicit finite product of 
rational functions of $q$ of the form $(1 - q^{-r})^{-1}$ for a positive integer $r$.
This module $\Kg_{w}^{-}(\lambda)$ (called a level-zero van der Kallen module) is 
defined to be the quotient module of the level-zero Demazure module $V_{w}^{-}(\lambda)$ 
by the sum of the submodules $V_{z}^{-}(\lambda)$ for all those coset representatives $z$ 
of minimal length for the cosets in $W/W_{\lambda}$ such that $z > w$ in the Bruhat order $<$ on $W$.
\end{abstract}
%
%
\section{Introduction.} 
\label{sec:intro}

In our previous paper \cite{NS16}, 
we computed the graded character $\gch V_{e}^{-}(\lambda)$ of 
the Demazure submodule $V_{e}^{-}(\lambda)$ of 
a level-zero extremal weight module $V(\lambda)$ 
over the quantum affine algebra $U_{\q}(\Fg_{\af})$ 
associated to a nontwisted affine Lie algebra $\Fg_{\af}$, 
where $\lambda \in P^{+}$ is a level-zero dominant integral weight and 
$e$ is the identity element of the affine Weyl group $W_{\af}$.
The main result of \cite{NS16} states that the graded character $\gch  V_{e}^{-}(\lambda)$ is 
identical to the specialization $E_{\lng \lambda}(q, 0)$ at $t = 0$ of 
the nonsymmetric Macdonald polynomial multiplied by the inverse of 
the finite product $\prod_{i \in I} \prod_{r = 1}^{ \pair{\lambda}{\alpha^{\vee}_{i}} } (1 - q^{-r})$, 
where $\lng$ is the longest element of the finite Weyl group $W \subset W_{\af}$ and 
$q$ denotes the formal exponential $\be^{\delta}$, 
with $\delta$ the null root of $\Fg_{\af}$.
Also, in \cite{NNS1}, we computed the graded character $\gch  V_{\lng}^{-}(\lambda)$ of 
the Demazure submodule $V_{\lng}^{-}(\lambda)$ of $V(\lambda)$, 
and proved that it is identical to the specialization $E_{\lng \lambda}(q, \infty)$ at $t = \infty$ 
of the nonsymmetric Macdonald polynomial multiplied by the inverse of the same finite product as above.
Moreover, in \cite{NNS1}, for an arbitrary element $w$ of the finite Weyl group $W$, 
we obtained combinatorial formulas for the graded character $\gch V_{w}^{-}(\lambda)$ of 
the Demazure submodule $V_{w}^{-}(\lambda)$ of $V(\lambda)$ and 
the specialization $E_{w \lambda}(q, \infty)$ at $t = \infty$ of 
the nonsymmetric Macdonald polynomial, described in terms of 
quantum Lakshmibai-Seshadri paths introduced in \cite{LNSSS16}.

Independently, Feigin-Makedonskyi \cite{FM17} introduced 
a family of finite-dimensional modules (called generalized Weyl modules),
indexed by the elements $w$ of the finite Weyl group $W$,
over the Iwahori subalgebra $\FI:=\Fb \oplus (z \BC[z] \otimes \Fg)$ 
of the current algebra $\Fg[z] := \BC[z] \otimes \Fg$ 
associated to the finite-dimensional simple Lie algebra $\Fg \subset \Fg_{\af}$
with Borel subalgebra $\Fb$, 
and proved that for the cases $w = e$ and $w = \lng$, their graded characters are 
identical to the specializations at $t = \infty$ and $t = 0$ of 
the nonsymmetric Macdonald polynomial $E_{\lng \lambda}(q, t)$, respectively.
Here we mention that the graded character of a generalized Weyl module 
indexed by a general element $w \ne e,\,\lng$ of $W$
is not identical to the specialization at $t = 0$ nor $t = \infty$ 
of a nonsymmetric Macdonald polynomial.

Afterward, in \cite{Kat16}, for an arbitrary element $w$ of 
the finite Weyl group $W$, Kato gave an algebro-geometric construction of 
a finite-dimensional $\FI$-module whose graded character is 
identical to the specialization $E_{w \lambda}(q, \infty)$ at $t = \infty$ 
of the nonsymmetric Macdonald polynomial. Based on this result,
Feigin-Kato-Makedonskyi \cite{FKM} gave an algebraic description of 
these $\FI$-modules (denoted by $\BU_{\sigma(\lambda_{-})}$, 
with $\lambda_{-}$ antidominant and $\sigma \in W$) 
by generators and relations, 
which are similar to that of generalized Weyl modules given in \cite{FM17}.

The purpose of this paper is to give a module 
(which we call a level-zero van der Kallen module) 
over the negative part $U_{\q}^{-}(\Fg_{\af})$ of $U_{\q}(\Fg_{\af})$ 
whose graded character is identical to 
the specialization $E_{w \lambda}(q, \infty)$ at $t = \infty$ of 
the nonsymmetric Macdonald polynomial multiplied by 
the inverse of the same product as above for an arbitrary $w \in W$; 
note that the Demazure submodule $V_{w}^{-}(\lambda)$ is a $U_{\q}^{-}(\Fg_{\af})$-submodule 
of the level-zero extremal weight module $V(\lambda)$.
Also, we give an explicit description of the crystal basis of 
the level-zero van der Kallen module in terms of 
semi-infinite Lakshmibai-Seshadri paths (or, quantum Lakshmibai-Seshadri paths).

Let us explain our main result more precisely. 
Let $\lambda \in P^{+}$ be a level-zero dominant integral weight. 
We set $\J = \J_{\lambda} := 
\bigl\{i \in I \mid \pair{\lambda}{\alpha^{\vee}_{i}} = 0 \bigr\}$, 
and let $\WJu$ denote the set of minimal-length coset representatives for the cosets in $W/\WJ$,
where $W_{S} := \langle s_{i} \mid i \in S \rangle$ is the subgroup of 
the finite Weyl group $W = \langle s_{i} \mid i \in I \rangle$; 
we denote by $\lng(S)$ the longest element of $W_{S}$.
For $w \in \WJu$, we set
\begin{equation*}
\Kg_{w}^{-}(\lambda):=
 V_{w}^{-}(\lambda)\Biggm/
 \sum_{ z \in \WJu, \, z > w } 
 V_{z}^{-}(\lambda); 
\end{equation*}
here we know from \cite{NS16} that $V_{z}^{-}(\lambda) \subset V_{w}^{-}(\lambda)$ 
for all $z \in \WJu$ such that $z > w$ in the Bruhat order $<$ 
on the finite Weyl group $W$. We call 
the module $\Kg_{w}^{-}(\lambda)$ a level-zero van der Kallen module.
Our main result is the following.
\begin{ithm} \label{ithm}
Let $\lambda \in P^{+}$ be a level-zero dominant integral weight, 
and let $w \in \WJu$. Then the graded character $\gch \Kg_{w}$ can be expressed as follows{\rm :}
\begin{equation*}
\gch \Kg_{w}^{-}(\lambda) = 
 \left(\prod_{i \in I} 
  \prod_{r = 1}^{\pair{\lambda}{\alpha^{\vee}_{i}} - \eps_{i}} (1 - q^{-r})\right)^{-1} 
  E_{w \lambda}(q, \infty),
\end{equation*}
where
\begin{equation*}
\eps_{i} = \eps_{i}(\xw) :=
\begin{cases}
1 & \text{\rm if } \quad \xw s_i > \xw, \\
0 & \text{\rm if } \quad \xw s_i < \xw.
\end{cases}
\end{equation*}
Here, $\xw := w \lng(S) \in W$ denotes the maximal-length representative for the coset $w W_{S} \in W/W_{S}$.
\end{ithm}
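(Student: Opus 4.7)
The plan is to exhibit an explicit crystal basis for the quotient module $\Kg_w^-(\lambda)$ inside the crystal $\SLS$ of semi-infinite Lakshmibai-Seshadri paths of shape $\lambda$, and then to compute its graded character by leveraging the combinatorial formula for $E_{w\lambda}(q,\infty)$ already obtained in \cite{NNS1}. The target crystal should be precisely $\Kc{w}$, the set of semi-infinite LS paths whose initial direction $\kappa(\pi)$ satisfies $\kappa(\pi) \sige w$ but is not strictly above $w$ in the relevant order on initial directions.

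First I would invoke the results of \cite{NS16} and \cite{NNS1}, which identify the crystal basis of $V_w^-(\lambda)$ with $\SLS_{\sige w}$ and show that the Bruhat-order inclusions $V_z^-(\lambda) \subset V_w^-(\lambda)$ for $z \in \WJu$ with $z > w$ correspond to the inclusions of the associated path subsets. The essential step at the crystal level is then to verify that
\begin{equation*}
\bigcup_{z \in \WJu,\, z > w} \SLS_{\sige z} \;=\; \SLS_{\sige w} \setminus \Kc{w},
\end{equation*}
i.e.\ that the union on the left is exactly the set of paths whose initial direction lies strictly above $w$. This reduces to a comparison between the Bruhat order on $\WJu$ and the semi-infinite Bruhat order on the set of initial directions, which is controlled by the quantum Bruhat graph $\QBJ$; one verifies that the covers $w \to z$ in $\WJu$ account for all of the relevant lower covers of $w$ among the initial directions. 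Granting this, the crystal of $\Kg_w^-(\lambda)$ is exactly $\Kc{w}$, so
\begin{equation*}
\gch \Kg_w^-(\lambda) \;=\; \sum_{\pi \in \Kc{w}} q^{-\deg(\pi)}\, \be^{\wt(\pi)}.
\end{equation*}

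The second step is to convert this sum into the claimed product form by means of the bijection developed in \cite{NNS1} between $\SLS_{\sige w}$ and pairs $(\eta, \mathbf{c})$, where $\eta$ runs over the quantum LS paths appearing in the combinatorial formula for $E_{w\lambda}(q,\infty)$ and $\mathbf{c} = (c_{i,r})$ is a tuple of nonnegative integers indexed by $i \in I$ and $1 \le r \le \pair{\lambda}{\alpha_i^{\vee}}$. Summing the ``tuple'' factor gives the full product $\bigl(\prod_i \prod_{r=1}^{\pair{\lambda}{\alpha_i^{\vee}}} (1 - q^{-r})\bigr)^{-1}$ times $E_{w\lambda}(q,\infty)$, and thereby recovers the formula for $\gch V_w^-(\lambda)$. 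Restricting this bijection to $\Kc{w}$, the exclusion of initial directions strictly above $w$ should translate into forcing the ``top-layer'' entry $c_{i,\pair{\lambda}{\alpha_i^{\vee}}}$ to vanish exactly when $\xw s_i > \xw$ (that is, when $\eps_i = 1$), while leaving all other entries unconstrained; summing then produces the truncated product with upper limit $\pair{\lambda}{\alpha_i^{\vee}} - \eps_i$.

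The main obstacle is this last identification. One must show that the ``cap'' condition on initial directions translates, under the bijection, into a precise constraint on the top-layer entries of $\mathbf{c}$ indexed by those $i$ for which $\xw s_i > \xw$. This is where the specific choice of $\xw = w\lng(\J)$ as the \emph{maximal} representative of the coset $wW_{\J}$ becomes crucial: a nonzero top-layer entry $c_{i,\pair{\lambda}{\alpha_i^{\vee}}} \neq 0$ is what ``lifts'' the initial direction past the ceiling $\xw$, and such a lift via $s_i$ lands strictly above $w$ in $\WJu$ precisely when $\xw s_i > \xw$. Making this correspondence rigorous, and reconciling it with the local structure of the quantum Bruhat graph near $\xw$, constitutes the main technical content of the proof.
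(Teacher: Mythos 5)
Your overall strategy coincides with the paper's crystal-theoretic proof: identify $\gch \Kg_{w}^{-}(\lambda)$ with the sum over $\Kc{w}(\lambda)=\bigl\{\pi\in\SLS(\lambda)\mid\kappa(\pi)\in\kq{w}\bigr\}$ (note that $\kappa(\pi)$ is the \emph{final}, not the initial, direction), fiber this set over quantum LS paths via $\cl$, and compare with the formula of \cite{NNS1} for $E_{w\lambda}(q,\infty)$. But the step you flag as ``the main obstacle'' is precisely the mathematical core of the argument, and you leave it unproved. What is needed is a non-recursive description of $\kq{w}$: Propositions~\ref{prop:kappa} and~\ref{prop:kappa2} show that $\kq{w}$ consists exactly of the elements $u\PJ(t_{\xi})$ with $u\in\mcr{\EQB(\xw)}$ and $\xi\in\wt^{\J}(w\Rightarrow u)+Q^{\vee,+}_{\Iw\setminus\J}$. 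Establishing this occupies all of Section~\ref{subsec:kw} and rests on the tilted-Bruhat-order characterization of $\EQB(w)$ (Lemma~\ref{lem:tec1}), on Lemmas~\ref{lem:tec2} and~\ref{lem:fin}, and on a descending induction on $\ell(w)$ using the diamond lemmas for the quantum Bruhat graph and the semi-infinite Bruhat order. Nothing in your proposal substitutes for this; ``reconciling it with the local structure of the quantum Bruhat graph near $\xw$'' is exactly the content that is missing, and the set $\EQB(\xw)$ does not even appear in your sketch.

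Moreover, your accounting of how the quotient condition enters the count is not correct as stated. The condition $\kappa(\pi)\in\kq{w}$ has two distinct effects: (a) it forces $\cl(\kappa(\pi))=\kappa(\eta)$ to lie in $\mcr{\EQB(\xw)}$, cutting the sum over quantum LS paths from all of $\QLS(\lambda)$ down to $\QLS^{w\lambda,\infty}(\lambda)$ --- this is what makes $E_{w\lambda}(q,\infty)$ appear, via Theorem~\ref{thm:NNS}; and (b) it forces the translation part $\xi$ into $\wt^{\J}(w\Rightarrow\kappa(\eta))+Q^{\vee,+}_{\Iw\setminus\J}$, which is what truncates the product to exponent range $1\le r\le\pair{\lambda}{\alpha_{i}^{\vee}}-\eps_{i}$. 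You attribute the entire effect to (b) (``forcing the top-layer entry to vanish''), having already --- and incorrectly --- built restriction (a) into your parametrization of $\SLS_{\sige w}(\lambda)$: the fibers of $\cl$ restricted to $\SLS_{\sige w}(\lambda)$ surject onto \emph{all} of $\QLS(\lambda)$, and the identity you assert in passing, namely $\gch V_{w}^{-}(\lambda)=\bigl(\prod_{i\in I}\prod_{r=1}^{\pair{\lambda}{\alpha_{i}^{\vee}}}(1-q^{-r})\bigr)^{-1}E_{w\lambda}(q,\infty)$, is false for general $w$ (it holds for $w=\mcr{\lng}$); it is incompatible with the decomposition $\gch V_{w}^{-}(\lambda)=\sum_{v\in\WJu,\,v\ge w}\gch\Kg_{v}^{-}(\lambda)$ of Appendix~\ref{sec:prf2}. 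So besides the unproved key proposition, the combinatorial translation you sketch would not, as written, produce the correct right-hand side.
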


Our proof of this theorem is a crystal-theoretic one, 
which is based on the formulas for the graded character $\gch V_{w}^{-}(\lambda)$ 
and the specialization $E_{w \lambda}(q, \infty)$ at $t = \infty$ of 
the nonsymmetric Macdonald polynomial in terms of quantum Lakshmibai-Seshadri paths 
obtained in \cite{NNS1}. In fact, the crystal basis of the level-zero 
van der Kallen module $\Kg_{w}^{-}(\lambda)$ can be realized 
as the set of those semi-infinite Lakshmibai-Seshadri paths $\pi$ of shape $\lambda$ 
whose final direction $\kappa(\pi)$ lies in the set
\begin{equation*}
\kq{w} := ((\WJu)_{\af})_{\sige w} \setminus 
 \bigcup_{z \in \WJu, \, z > w} ( (\WJu)_{\af} )_{\sige z},
\end{equation*}
where $( (\WJu)_{\af} )_{\sige x} := \bigl\{ y \in (\WJu)_{\af} \mid y \sige x\}$ 
for $x \in (\WJu)_{\af}$, with $(\WJu)_{\af}$ 
the set of Peterson's coset representatives 
for the cosets in $W_{\af}/(\WJ)_{\af}$ 
(for the notation, see Section~\ref{sec:main}).
Also, in the course of the proof of Theorem~\ref{ithm}, 
we obtain a non-recursive description of the (strange) subsets 
$\EQB(w)$ for $w \in W$, introduced in \cite{NNS1} and studied in \cite{NNS2}, 
in terms of the semi-infinite Bruhat graph (or, quantum Bruhat graph); 
see Section~\ref{sec:kappa} for details.
Here we should mention that in contrast to the arguments in \cite{FKM}, 
our proof works uniformly for all nontwisted affine Lie algebras and does not use 
the reduction to the rank two cases, though the construction of 
the module $\Kg_{w}^{-}(\lambda)$ above is inspired by \cite{FKM}.

In Appendix~\ref{sec:prf2}, we give another proof of Theorem~\ref{ithm}, 
which is based on the recursive formula for the specialization at $t = \infty$ of 
the nonsymmetric Macdonald polynomials due to Cherednik-Orr (\cite{CO});
we would like to thank a referee for suggesting this recursive proof.
A key to this proof is Lemma~\ref{lem:dj2} (or rather equivalently, 
Proposition~\ref{prop:interval}). 
As a consequence of this lemma, we obtain a formula, 
which expresses the graded character of a Demazure submodule of 
a level-zero extremal weight module as an explicit sum of 
the graded characters of level-zero van der Kallen modules:
for each $w \in \WJu$,
\begin{equation*}
\gch V_{w}^{-}(\lambda) = \sum_{v \in \WJu, \, v \ge w} \gch \Kg_{v}^{-}(\lambda).
\end{equation*}

Finally, in Appendix~\ref{sec:apdx}, 
we remark that the cyclic vector of 
(the classical limit , i.e., the limit at $\q \to 1$ of) 
the level-zero van der Kallen module $\Kg_{w}^{-}(\lambda)$ 
for $w \in \WJu$ satisfies the same relations (see Lemma~\ref{lem:rel}) 
as those for the negative-roots version of the $\FI$-modules 
$\BU_{\sigma(\lambda_{-})}$ for $\sigma \in W$ introduced in \cite{FKM}.
Because both of these modules have the same graded character (at least in simply-laced cases), 
they are in fact isomorphic if the notational convention is suitably adjusted. 
Hence level-zero van der Kallen modules $\Kg_{w}^{-}(\lambda)$ 
can be thought of as a quantum analog of 
the $\FI$-modules $\BU_{\sigma(\lambda_{-})}$ in \cite{FKM}.

This paper is organized as follows. 
In Section~\ref{sec:main}, we fix our notation 
for affine Lie algebras, and recall some basic facts 
about the (parabolic) semi-infinite Bruhat graph. 
Next, we briefly review fundamental results on level-zero extremal weight modules 
and their Demazure submodules. Also, we define 
level-zero van der Kallen modules and state our main result above.
In Section~\ref{sec:SLS}, we review the realization of the crystal bases of 
level-zero extremal weight modules by semi-infinite Lakshmibai-Seshadri paths.
In Section~\ref{sec:kappa}, 
we first recall some basic facts about the (parabolic) quantum Bruhat graph, 
and then review from \cite{NNS1} a recursive description of 
the subsets $\EQB(w) \subset W$, $w \in W$, which are needed in the formula 
for the specialization $E_{w \lambda}(q, \infty)$. Also, we obtain a condition 
for the final directions of semi-infinite Lakshmibai-Seshadri paths 
forming the crystal basis of a level-zero van der Kallen module.
In Section~\ref{sec:prf-main}, by using this condition, 
we give a proof of our main result above.
In Appendix~\ref{sec:prf2}, we give a recursive proof of our main result.
In Appendix~\ref{sec:apdx}, we mention some relations satisfied 
by the cyclic vector of the level-zero van der Kallen module.
%
%
\subsection*{Acknowledgments.}
S.N. was partially supported by 
JSPS Grant-in-Aid for Scientific Research (B) 16H03920. 
D.S. was partially supported by 
JSPS Grant-in-Aid for Scientific Research (C) 15K04803. 

%
\section{Main result.}
\label{sec:main}
%
%
\subsection{Affine Lie algebras.}
\label{subsec:liealg}

Let $\Fg$ be a finite-dimensional simple Lie algebra over $\BC$
with Cartan subalgebra $\Fh$. 
Denote by $\{ \alpha_{i}^{\vee} \}_{i \in I}$ and 
$\{ \alpha_{i} \}_{i \in I}$ the set of simple coroots and 
simple roots of $\Fg$, respectively, and set
$Q := \bigoplus_{i \in I} \BZ \alpha_i$, 
$Q^{+} := \sum_{i \in I} \BZ_{\ge 0} \alpha_i$, and 
$Q^{\vee} := \bigoplus_{i \in I} \BZ \alpha_i^{\vee}$, 
$Q^{\vee,+} := \sum_{i \in I} \BZ_{\ge 0} \alpha_i^{\vee}$; 
for $\xi,\,\zeta \in Q^{\vee}$, we write $\xi \ge \zeta$ if $\xi-\zeta \in Q^{\vee,+}$. 
Let $\Delta$, $\Delta^{+}$, and $\Delta^{-}$ be 
the set of roots, positive roots, and negative roots of $\Fg$, respectively, 
with $\theta \in \Delta^{+}$ the highest root of $\Fg$. 
For a root $\alpha \in \Delta$, we denote by $\alpha^{\vee}$ its dual root. 
We set $\rho:=(1/2) \sum_{\alpha \in \Delta^{+}} \alpha$. 
Also, let $\vpi_{i}$, $i \in I$, denote the fundamental weights for $\Fg$, and set
%
%
\begin{equation} \label{eq:P-fin}
P:=\bigoplus_{i \in I} \BZ \vpi_{i}, \qquad 
P^{+} := \sum_{i \in I} \BZ_{\ge 0} \vpi_{i}. 
\end{equation} 

Let $\Fg_{\af} = \bigl(\BC[z,z^{-1}] \otimes \Fg\bigr) \oplus \BC c \oplus \BC d$ be 
the untwisted affine Lie algebra over $\BC$ associated to $\Fg$, 
where $c$ is the canonical central element, and $d$ is 
the scaling element (or the degree operator), 
with Cartan subalgebra $\Fh_{\af} = \Fh \oplus \BC c \oplus \BC d$. 
We regard an element $\mu \in \Fh^{\ast}:=\Hom_{\BC}(\Fh,\,\BC)$ as an element of 
$\Fh_{\af}^{\ast}$ by setting $\pair{\mu}{c}=\pair{\mu}{d}:=0$, where 
$\pair{\cdot\,}{\cdot}:\Fh_{\af}^{\ast} \times \Fh_{\af} \rightarrow \BC$ denotes
the canonical pairing of $\Fh_{\af}^{\ast}:=\Hom_{\BC}(\Fh_{\af},\,\BC)$ and $\Fh_{\af}$. 
Let $\{ \alpha_{i}^{\vee} \}_{i \in I_{\af}} \subset \Fh_{\af}$ and 
$\{ \alpha_{i} \}_{i \in I_{\af}} \subset \Fh_{\af}^{\ast}$ be the set of 
simple coroots and simple roots of $\Fg_{\af}$, respectively, 
where $I_{\af}:=I \sqcup \{0\}$; note that 
$\pair{\alpha_{i}}{c}=0$ and $\pair{\alpha_{i}}{d}=\delta_{i0}$ 
for $i \in I_{\af}$. 
Denote by $\delta \in \Fh_{\af}^{\ast}$ the null root of $\Fg_{\af}$; 
recall that $\alpha_{0}=\delta-\theta$. 
Also, let $\Lambda_{i} \in \Fh_{\af}^{\ast}$, $i \in I_{\af}$, 
denote the fundamental weights for $\Fg_{\af}$ such that $\pair{\Lambda_{i}}{d}=0$, 
and set 
%
%
\begin{equation} \label{eq:P}
P_{\af} := 
  \left(\bigoplus_{i \in I_{\af}} \BZ \Lambda_{i}\right) \oplus 
   \BZ \delta \subset \Fh^{\ast}, \qquad 
P_{\af}^{0}:=\bigl\{\mu \in P_{\af} \mid \pair{\mu}{c}=0\bigr\};
\end{equation}
notice that $P_{\af}^{0}=P \oplus \BZ \delta$, and that
$\pair{\mu}{\alpha_{0}^{\vee}} = - \pair{\mu}{\theta^{\vee}}$ 
for $\mu \in P_{\af}^{0}$. We remark that for each $i \in I$, 
$\vpi_{i}$ is equal to $\Lambda_{i}-\pair{\Lambda_{i}}{c}\Lambda_{0}$, 
which is called the level-zero fundamental weight in \cite{Kas02}. 

Let $W := \langle s_{i} \mid i \in I \rangle$ and 
$W_{\af} := \langle s_{i} \mid i \in I_{\af} \rangle$ be the (finite) Weyl group of $\Fg$ and 
the (affine) Weyl group of $\Fg_{\af}$, respectively, 
where $s_{i}$ is the simple reflection with respect to $\alpha_{i}$ 
for each $i \in I_{\af}$. We denote by $\ell:W_{\af} \rightarrow \BZ_{\ge 0}$ 
the length function on $W_{\af}$, whose restriction to $W$ agrees with 
the one on $W$, by $e \in W \subset W_{\af}$ the identity element, 
and by $\lng \in W$ the longest element. 
Denote by $\ge$ the (ordinary) Bruhat order on $W$. 
For each $\xi \in Q^{\vee}$, let $t_{\xi} \in W_{\af}$ denote 
the translation in $\Fh_{\af}^{\ast}$ by $\xi$ (see \cite[Sect.~6.5]{Kac}); 
for $\xi \in Q^{\vee}$, we have 
%
%
\begin{equation}\label{eq:wtmu}
t_{\xi} \mu = \mu - \pair{\mu}{\xi}\delta \quad 
\text{if $\mu \in \Fh_{\af}^{\ast}$ satisfies $\pair{\mu}{c}=0$}.
\end{equation}
Then, $\bigl\{ t_{\xi} \mid \xi \in Q^{\vee} \bigr\}$ forms 
an abelian normal subgroup of $W_{\af}$, in which $t_{\xi} t_{\zeta} = t_{\xi + \zeta}$ 
holds for $\xi,\,\zeta \in Q^{\vee}$. Moreover, we know from \cite[Proposition 6.5]{Kac} that
\begin{equation*}
W_{\af} \cong 
 W \ltimes \bigl\{ t_{\xi} \mid \xi \in Q^{\vee} \bigr\} \cong W \ltimes Q^{\vee}. 
\end{equation*}

Denote by $\rr$ the set of real roots of $\Fg_{\af}$, and 
by $\prr \subset \rr$ the set of positive real roots; 
we know from \cite[Proposition 6.3]{Kac} that
$\rr = 
\bigl\{ \alpha + n \delta \mid \alpha \in \Delta,\, n \in \BZ \bigr\}$, 
and 
$\prr = 
\Delta^{+} \sqcup 
\bigl\{ \alpha + n \delta \mid \alpha \in \Delta,\, n \in \BZ_{> 0}\bigr\}$. 
For $\beta \in \rr$, we denote by $\beta^{\vee} \in \Fh_{\af}$ 
its dual root, and $s_{\beta} \in W_{\af}$ the corresponding reflection; 
if $\beta \in \rr$ is of the form $\beta = \alpha + n \delta$ 
with $\alpha \in \Delta$ and $n \in \BZ$, then 
$s_{\beta} =s_{\alpha} t_{n\alpha^{\vee}} \in W \ltimes Q^{\vee}$.

Finally, let $U_{\q}(\Fg_{\af})$ (resp., $U_{\q}'(\Fg_{\af})$)
denote the quantized universal enveloping algebra over $\BC(\q)$ 
associated to $\Fg_{\af}$ (resp., $[\Fg_{\af},\Fg_{\af}]$), 
with $E_{i}$ and $F_{i}$, $i \in I_{\af}$, the Chevalley generators 
corresponding to $\alpha_{i}$ and $-\alpha_{i}$, respectively.  
We denote by $U_{\q}^{-}(\Fg_{\af})$ 
the negative part of $U_{\q}(\Fg_{\af})$, that is, 
the $\BC(\q)$-subalgebra of $U_{\q}(\Fg_{\af})$ generated by $F_{i}$, $i \in I_{\af}$. 
%
%
\subsection{Parabolic semi-infinite Bruhat graph.}
\label{subsec:SiBG}

In this subsection, we take and fix an arbitrary subset $\J \subset I$. We set 
$\QJ := \bigoplus_{i \in \J} \BZ \alpha_i$, 
$\QJv := \bigoplus_{i \in \J} \BZ \alpha_i^{\vee}$,  
$\QJvp := \sum_{i \in \J} \BZ_{\ge 0} \alpha_i^{\vee}$, 
$\DeJ := \Delta \cap \QJ$, 
$\DeJ^{\pm} := \Delta^{\pm} \cap \QJ$, 
$\WJ := \langle s_{i} \mid i \in \J \rangle$, and 
$\rho_{\J}:=(1/2) \sum_{\alpha \in \DeJ^{+}} \alpha$; 
we denote by
$[\,\cdot\,]^{\J} : Q^{\vee} \twoheadrightarrow Q_{\Jc}^{\vee}$
the projection from $Q^{\vee}=Q_{\Jc}^{\vee} \oplus \QJv$
onto $Q_{\Jc}^{\vee}$ with kernel $\QJv$. 
Let $\WJu$ denote the set of minimal-length coset representatives 
for the cosets in $W/\WJ$; we know from \cite[Sect.~2.4]{BB} that 
%
%
\begin{equation} \label{eq:mcr}
\WJu = \bigl\{ w \in W \mid 
\text{$w \alpha \in \Delta^{+}$ for all $\alpha \in \DeJ^{+}$}\bigr\}.
\end{equation}
For $w \in W$, we denote by $\mcr{w}=\mcr{w}^{\J} \in \WJu$ 
(resp., $\xw=\xw^{\J}$) the minimal-length (resp., maximal-length) coset 
representative for the coset $w \WJ$ in $W/\WJ$; 
note that $\xw = \mcr{w}\lng(\J)$, where 
$\lng(\J)$ denotes the longest element of the subgroup $\WJ$ of $W$. 
Also, following \cite{Pet97} 
(see also \cite[Sect.~10]{LS10}), we set
\begin{align}
(\DeJ)_{\af} 
  & := \bigl\{ \alpha + n \delta \mid 
  \alpha \in \DeJ,\,n \in \BZ \bigr\} \subset \Delta_{\af}, \\
(\DeJ)_{\af}^{+}
  &:= (\DeJ)_{\af} \cap \prr = 
  \DeJ^{+} \sqcup \bigl\{ \alpha + n \delta \mid 
  \alpha \in \DeJ,\, n \in \BZ_{> 0} \bigr\}, \\
\label{eq:stabilizer}
(\WJ)_{\af} 
 & := \WJ \ltimes \bigl\{ t_{\xi} \mid \xi \in \QJv \bigr\}
   = \bigl\langle s_{\beta} \mid \beta \in (\DeJ)_{\af}^{+} \bigr\rangle, \\
\label{eq:Pet}
(\WJu)_{\af}
 &:= \bigl\{ x \in W_{\af} \mid 
 \text{$x\beta \in \prr$ for all $\beta \in (\DeJ)_{\af}^{+}$} \bigr\};
\end{align}
if $\J = \emptyset$, then 
$(W^{\emptyset})_{\af}=W_{\af}$ and $(W_{\emptyset})_{\af}=\bigl\{e\bigr\}$. 
We know from \cite{Pet97} (see also \cite[Lemma~10.6]{LS10}) that 
for each $x \in W_{\af}$, there exist a unique 
$x_1 \in (\WJu)_{\af}$ and a unique $x_2 \in (\WJ)_{\af}$ 
such that $x = x_1 x_2$; let 
%
%
\begin{equation} \label{eq:PiJ}
\PJ : W_{\af} \twoheadrightarrow (\WJu)_{\af}, \quad x \mapsto x_{1}, 
\end{equation}
denote the projection, 
where $x= x_1 x_2$ with $x_1 \in (\WJu)_{\af}$ and $x_2 \in (\WJ)_{\af}$. 
%
%
\begin{lem} \label{lem:PiJ}
\mbox{}
\begin{enu}
\item It holds that 
%
%
\begin{equation} \label{eq:PiJ2}
\begin{cases}
\PJ(w)=\mcr{w} 
  & \text{\rm for all $w \in W$}; \\[1mm]
\PJ(xt_{\xi})=\PJ(x)\PJ(t_{\xi}) 
  & \text{\rm for all $x \in W_{\af}$ and $\xi \in Q^{\vee}$};
\end{cases}
\end{equation}
in particular, 
$(\WJu)_{\af} 
  = \bigl\{ w \PJ(t_{\xi}) \mid w \in \WJu,\,\xi \in Q^{\vee} \bigr\}$.

\item For each $\xi \in Q^{\vee}$, 
the element $\PJ(t_{\xi}) \in (\WJu)_{\af}$ is 
of the form{\rm:} $\PJ(t_{\xi})=ut_{\xi+\xi_{1}}$ 
for some $u \in \WJ$ and $\xi_{1} \in \QJv$. 

\item For $\xi,\,\zeta \in Q^{\vee}$, 
$\PJ(t_{\xi}) = \PJ(t_{\zeta})$ if and only if $\xi-\zeta \in \QJv$.

\end{enu}
\end{lem}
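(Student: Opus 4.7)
The plan rests on the characterization that $\PJ(y)$ is the \emph{unique} element of $(\WJu)_{\af}$ satisfying $\PJ(y)^{-1} y \in (\WJ)_{\af}$, together with the semidirect product description $(\WJ)_{\af} = \WJ \ltimes \{t_{\eta} \mid \eta \in \QJv\}$ from~\eqref{eq:stabilizer} and the commutation rule $t_{\mu} v = v t_{v^{-1}\mu}$ in $W \ltimes Q^{\vee}$. An auxiliary input, used throughout, is that $\WJ$ acts trivially on $Q^{\vee}/\QJv$: indeed $s_{i}\eta - \eta = -\pair{\alpha_{i}}{\eta}\alpha_{i}^{\vee} \in \QJv$ for $i \in \J$, so by induction on length, $(v - 1)\eta \in \QJv$ for all $v \in \WJ$ and $\eta \in Q^{\vee}$.

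For the first equation of (1), I would verify $\mcr{w} \in (\WJu)_{\af}$: since~\eqref{eq:mcr} gives $\mcr{w}\DeJ^{+} \subset \Delta^{+}$, any $\beta = \alpha + n\delta \in (\DeJ)_{\af}^{+}$ with $n > 0$ satisfies $\mcr{w}\beta = \mcr{w}\alpha + n\delta \in \prr$. Together with $\mcr{w}^{-1} w \in \WJ \subset (\WJ)_{\af}$, uniqueness forces $\PJ(w) = \mcr{w}$. For (2), I would write $t_{\xi} = \PJ(t_{\xi}) y_{2}$ with $y_{2} = w t_{\zeta} \in (\WJ)_{\af}$ ($w \in \WJ$, $\zeta \in \QJv$), solve $\PJ(t_{\xi}) = t_{\xi - \zeta} w^{-1} = w^{-1} t_{w(\xi - \zeta)}$, and set $u := w^{-1}$, $\xi_{1} := w(\xi - \zeta) - \xi = (w - 1)\xi - w\zeta$; both summands lie in $\QJv$ by the auxiliary fact and by the $\WJ$-invariance of $\QJv$.

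The second equation of (1) is the technical core. Setting $z := \PJ(x)\PJ(t_{\xi})$, I would verify (a) $z \in (\WJu)_{\af}$ and (b) $z^{-1}(xt_{\xi}) \in (\WJ)_{\af}$. For (a), any $\beta \in (\DeJ)_{\af}^{+}$ satisfies $\PJ(t_{\xi})\beta \in (\DeJ)_{\af}$ by (2) (since $u \in \WJ$ preserves $\DeJ$ and $t_{\xi+\xi_{1}}$ only shifts the $\delta$-component), and combined with $\PJ(t_{\xi})\beta \in \prr$ this gives $\PJ(t_{\xi})\beta \in (\DeJ)_{\af}^{+}$; then $\PJ(x)\PJ(t_{\xi})\beta \in \prr$. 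For (b), I would rewrite
\begin{equation*}
z^{-1}(xt_{\xi}) = \PJ(t_{\xi})^{-1}\bigl(\PJ(x)^{-1}x\bigr)\PJ(t_{\xi}) \cdot \PJ(t_{\xi})^{-1} t_{\xi},
\end{equation*}
note that $\PJ(t_{\xi})^{-1} t_{\xi} \in (\WJ)_{\af}$ by definition of $\PJ$, and check that conjugation by $\PJ(t_{\xi}) = u t_{\xi + \xi_{1}}$ preserves $(\WJ)_{\af}$; expanding a generic $w' t_{\zeta'} \in (\WJ)_{\af}$ via the commutation rule reduces this to the statement that the resulting Weyl component lies in $\WJ$ (clear, since $u \in \WJ$) and the translation component lies in $\QJv$ (by the auxiliary fact). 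The ``in particular'' clause then follows by applying this equation to $x = v \in \WJu$ together with $\PJ(v) = v$.

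For (3), $\PJ(t_{\xi}) = \PJ(t_{\zeta})$ iff $t_{\xi - \zeta} \in (\WJ)_{\af}$; since $t_{\xi - \zeta}$ is a pure translation in $W \ltimes Q^{\vee}$, the semidirect product structure forces its $\WJ$-component to be trivial, so this is equivalent to $\xi - \zeta \in \QJv$. The main obstacle I anticipate is the conjugation check in (b) above; once the commutation relation $t_{\mu} v = v t_{v^{-1}\mu}$ is applied, the verification reduces entirely to the auxiliary triviality of the $\WJ$-action on $Q^{\vee}/\QJv$ established at the outset.
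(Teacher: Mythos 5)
Your proposal is correct, but it takes a genuinely different route from the paper: the paper does not prove parts (1) and (2) at all, it simply cites \cite[Proposition~10.10]{LS10} for (1) and \cite[(3.7)]{LNSSS} for (2), and only records how (3) follows from them. You instead give a self-contained argument from Peterson's decomposition itself: you characterize $\PJ(y)$ as the unique $z \in (\WJu)_{\af}$ with $z^{-1}y \in (\WJ)_{\af}$, verify $\mcr{w} \in (\WJu)_{\af}$ directly from \eqref{eq:mcr} and \eqref{eq:Pet}, and obtain the multiplicativity $\PJ(xt_{\xi}) = \PJ(x)\PJ(t_{\xi})$ by checking that $\PJ(x)\PJ(t_{\xi})$ sends $(\DeJ)_{\af}^{+}$ into $\prr$ (using the shape $ut_{\xi+\xi_{1}}$ of $\PJ(t_{\xi})$ coming from your proof of (2)) and that conjugation by $\PJ(t_{\xi})$ preserves $(\WJ)_{\af}$, both of which reduce, via $t_{\mu}v = vt_{v^{-1}\mu}$, to your correctly established observation that $\WJ$ acts trivially on $Q^{\vee}/\QJv$. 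What your approach buys is independence from the references at the cost of some semidirect-product bookkeeping; what the paper buys is brevity. Two steps in your write-up deserve one explicit sentence each, though neither is a gap since both follow from what you have already proved: the inclusion $(\WJu)_{\af} \subset \bigl\{ w\PJ(t_{\xi}) \mid w \in \WJu,\ \xi \in Q^{\vee} \bigr\}$ (write $y \in (\WJu)_{\af}$ as $y = vt_{\eta}$ with $v \in W$, $\eta \in Q^{\vee}$ and apply $y = \PJ(y) = \mcr{v}\,\PJ(t_{\eta})$), and the ``only if'' direction of your first equivalence in (3), which follows from your product formula via $\PJ(t_{\xi}) = \PJ(t_{\zeta})\PJ(t_{\xi-\zeta})$, forcing $\PJ(t_{\xi-\zeta}) = e$ and hence $t_{\xi-\zeta} \in (\WJ)_{\af}$ (alternatively, it is exactly the paper's deduction of the ``only if'' part from part (2) and the uniqueness of the decomposition in $W \ltimes Q^{\vee}$).
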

\begin{proof}
Part (1) follows from \cite[Proposition~10.10]{LS10}, and 
part (2) follows from \cite[(3.7)]{LNSSS}. 
The ``if'' part of part (3) is obvious by part (1) and 
the fact that $t_{\xi-\zeta} \in (\WJ)_{\af}$. 
The ``only if'' part of part (3) is obvious by part (2). 
\end{proof}
%
%
\begin{dfn} \label{dfn:sell}
Let $x \in W_{\af}$, and 
write it as $x = w t_{\xi}$ with $w \in W$ and $\xi \in Q^{\vee}$. 
We define the semi-infinite length $\sell(x)$ of $x$ by:
$\sell (x) = \ell (w) + 2 \pair{\rho}{\xi}$. 
\end{dfn}
%
%
\begin{dfn}[\cite{Lu80}, \cite{Lu97}; see also \cite{Pet97}] \label{dfn:SiB}
\mbox{}
\begin{enu}
\item The (parabolic) semi-infinite Bruhat graph $\SBJ$ 
is the $\prr$-labeled directed graph whose 
vertices are the elements of $(\WJu)_{\af}$, and 
whose directed edges are of the form: 
$x \edge{\beta} y$ for $x,y \in (\WJu)_{\af}$ and $\beta \in \prr$ 
such that $y=s_{\beta}x$ and $\sell (y) = \sell (x) + 1$. 
When $\J=\emptyset$, we write $\SB$ for 
$\mathrm{BG}^{\si}\bigl((W^{\emptyset})_{\af}\bigr)$. 

\item 
The (parabolic) semi-infinite Bruhat order is a partial order 
$\sile$ on $(\WJu)_{\af}$ defined as follows: 
for $x,\,y \in (\WJu)_{\af}$, we write $x \sile y$ 
if there exists a directed path in $\SBJ$ from $x$ to $y$; 
we write $x \sil y$ if $x \sile y$ and $x \ne y$. 
\end{enu}
\end{dfn}

\begin{rem}
In the case $\J = \emptyset$, the semi-infinite Bruhat order on $W_{\af}$ is
essentially the same as the generic Bruhat order introduced in \cite{Lu80}; 
see \cite[Appendix~A.3]{INS} for details. Also, for a general $\J$, 
the parabolic semi-infinite Bruhat order on $(\WJu)_{\af}$
is essentially the same as the partial order on $\J$-alcoves introduced in
\cite{Lu97} when we take a special point to be the origin.
\end{rem}
%
%
\begin{rem} \label{rem:SB}
It follows from the definition that 
the restriction of the semi-infinite Bruhat order $\sige$ on $(\WJu)_{\af}$ 
to $\WJu \subset (\WJu)_{\af}$ agrees with 
the (ordinary) Bruhat order $\ge$ on $\WJu$. 
\end{rem}
In Section~\ref{subsec:lemma} below, we recall some of 
the basic properties of the semi-infinite Bruhat order. 
%
%
\subsection{Crystal bases of extremal weight modules.}
\label{subsec:extremal}

In this subsection, we fix $\lambda \in P^{+} \subset P_{\af}^{0}$ 
(see \eqref{eq:P-fin} and \eqref{eq:P}). 
Let $V(\lambda)$ denote the extremal weight module of 
extremal weight $\lambda$ over $U_{\q}(\Fg_{\af})$, 
which is an integrable $U_{\q}(\Fg_{\af})$-module generated by 
a single element $v_{\lambda}$ with 
the defining relation that $v_{\lambda}$ is 
an extremal weight vector of weight $\lambda$; 
recall from \cite[Sect.~3.1]{Kas02} and \cite[Sect.~2.6]{Kas05} that 
$v_{\lambda}$ is an extremal weight vector of weight $\lambda$ 
if ($v_{\lambda}$ is a weight vector of weight $\lambda$ and) 
there exists a family $\{ v_{x} \}_{x \in W_{\af}}$ 
of weight vectors in $V(\lambda)$ such that $v_{e}=v_{\lambda}$, 
and such that for every $i \in I_{\af}$ and $x \in W_{\af}$ with 
$n:=\pair{x\lambda}{\alpha_{i}^{\vee}} \ge 0$ (resp., $\le 0$),
the equalities $E_{i}v_{x}=0$ and 
$F_{i}^{(n)}v_{x}=v_{s_{i}x}$ 
(resp., $F_{i}v_{x}=0$ and 
$E_{i}^{(-n)}v_{x}=v_{s_{i}x}$) hold, 
where for $i \in I_{\af}$ and $k \in \BZ_{\ge 0}$, 
the $E_{i}^{(k)}$ and $F_{i}^{(k)}$ are the $k$-th divided powers of 
$E_{i}$ and $F_{i}$, respectively;
note that $v_{x}$ is an extremal weight vector of weight $x\lambda$. 
We know from \cite[Proposition~8.2.2]{Kas94} that $V(\lambda)$ has 
a crystal basis $(\CL(\lambda),\CB(\lambda))$ and the corresponding 
global basis $\bigl\{G(b) \mid b \in \CB(\lambda)\bigr\}$; 
we denote by $u_{\lambda}$ the element of $\CB(\lambda)$ 
such that $G(u_{\lambda})=v_{\lambda}$. 
It follows from \cite[Sect.~7]{Kas94} that 
the affine Weyl group $W_{\af}$ acts on $\CB(\lambda)$ by
%
%
\begin{equation} \label{eq:W1}
s_{i} \cdot b:=
\begin{cases}
f_{i}^{n}b & \text{if $n:=\pair{\wt b}{\alpha_{i}^{\vee}} \ge 0$}, \\[1.5mm]
e_{i}^{-n}b & \text{if $n:=\pair{\wt b}{\alpha_{i}^{\vee}} \le 0$}, 
\end{cases}
\end{equation}
for $b \in \CB(\lambda)$ and $i \in I_{\af}$. 

We know the following from \cite{Kas02} (see also \cite[Sect.~5.2]{NS16}). 
Let $i \in I$. 
\begin{enu}
\item[(i)] 
The crystal graph of $\CB(\vpi_{i})$ is connected, and 
$\CB(\vpi_{i})_{\vpi_{i}+k\delta}=
\bigl\{ u_{\vpi_{i}+k\delta} \bigr\}$ for all $k \in \BZ$, 
where $u_{\vpi_{i}+k\delta}:=
t_{-k\alpha_{i}^{\vee}} \cdot u_{\vpi_{i}}$ for $k \in \BZ$. 
Therefore, $\dim V(\vpi_{i})_{\vpi_{i}+k\delta}=1$ for all $k \in \BZ$. 

\item[(ii)]
There exists a $U_{\q}'(\Fg_{\af})$-module 
automorphism $z_{i}:V(\vpi_i) \rightarrow V(\vpi_i)$ 
that maps $v_{\vpi_{i}}$ to $v_{\vpi_{i}+\delta}:=
G(u_{\vpi_{i}+\delta})$; thus, this map commutes 
with the Kashiwara operators $e_{j}$, $f_{j}$, 
$j \in I_{\af}$, on $V(\vpi_{i})$. 

\item[(iii)] We have $z_{i}(\CL(\vpi_{i})) \subset \CL(\vpi_{i})$. 
Hence the map $z_{i}:V(\vpi_{i}) \rightarrow V(\vpi_{i})$ 
induces a $\BC$-linear automorphism $z_{i}:
\CL(\vpi_{i})/\q\CL(\vpi_{i}) \rightarrow \CL(\vpi_{i})/\q\CL(\vpi_{i})$; 
this induced map commutes with the Kashiwara operators 
$e_{j}$, $f_{j}$, $j \in I_{\af}$, on $\CL(\vpi_{i})/\q\CL(\vpi_{i})$, and 
satisfies $z_{i}(u_{\vpi_{i}}) = u_{\vpi_{i}+\delta}$. 
Therefore, the map $z_{i}$ preserves $\CB(\vpi_{i})$. 
\end{enu}
Let us write $\lambda \in P^{+}$ as
$\lambda = \sum_{i \in I} m_{i} \vpi_{i}$, 
with $m_{i} \in \BZ_{\ge 0}$ for $i \in I$. 
We fix an arbitrary total order on $I$, and then set 
$\ti{V}(\lambda):=
\bigotimes_{i \in I} V(\vpi_{i})^{\otimes m_{i}}$.
By \cite[Eq.\,(4.8) and Corollary~4.15]{BN}, 
there exists an injective $U_{\q}(\Fg_{\af})$-module homomorphism
$\Phi_{\lambda}:V(\lambda) \hookrightarrow \ti{V}(\lambda)$
that maps $v_{\lambda}$ to $\ti{v}_{\lambda}:=
\bigotimes_{i \in I} v_{\vpi_{i}}^{\otimes m_{i}}$. 
For each $i \in I$ and $1 \le k \le m_{i}$, we define $z_{i,k}$ to be 
the $U_{\q}'(\Fg_{\af})$-module automorphism of $\ti{V}(\lambda)$ 
which acts as $z_{i}$ only on the $k$-th factor of $V(\vpi_{i})^{\otimes m_{i}}$ 
in $\ti{V}(\lambda)$, and as the identity map on the other factors of $\ti{V}(\lambda)$; 
these $z_{i,k}$'s, $i \in I$, $1 \le k \le m_{i}$, 
commute with each other. 
We define
%
%
\begin{equation} \label{eq:olpar}
\ol{\Par(\lambda)} := 
 \bigl\{ \brho = (\rho^{(i)})_{i \in I} \mid 
 \text{$\rho^{(i)}$ is a partition of length $\le m_{i}$ 
 for each $i \in I$} \bigr\}. 
\end{equation}
For $\brho=(\rho^{(i)})_{i \in I} \in \ol{\Par(\lambda)}$, we set
%
%
\begin{equation} \label{eq:Schur}
s_{\brho}(z^{-1}):=\prod_{i \in I} 
s_{\rho^{(i)}}(z_{i,1}^{-1},\,\dots,\,z_{i,m_i}^{-1}). 
\end{equation}
Here, for a partition 
$\chi=(\chi_{1} \ge \cdots \ge \chi_{m})$ 
of length less than or equal to $m \in \BZ_{\ge 0}$, 
$s_{\chi}(x)=s_{\chi}(x_{1},\,\dots,\,x_{m})$ denotes 
the Schur polynomial in the variables $x_{1},\,\dots,\,x_{m}$ 
corresponding to the partition $\chi$. 
We can easily show (see \cite[Sect.~7.3]{NS16}) that 
$s_{\brho}(z^{-1})(\Img \Phi_{\lambda}) \subset \Img \Phi_{\lambda}$ 
for each $\brho = (\rho^{(i)})_{i \in I} \in \ol{\Par(\lambda)}$. 
Hence we can define a $U_{\q}'(\Fg_{\af})$-module homomorphism
$z_{\brho}:V(\lambda) \rightarrow V(\lambda)$ in such a way that 
the following diagram commutes:
%
%
\begin{equation} \label{eq:zrho}
\begin{CD}
V(\lambda) @>{\Phi_{\lambda}}>> \ti{V}(\lambda) \\
@V{z_{\brho}}VV @VV{s_{\brho}(z^{-1})}V \\
V(\lambda) @>{\Phi_{\lambda}}>> \ti{V}(\lambda);
\end{CD}
\end{equation}
note that $z_{\brho}v_{\lambda} = S_{\brho}^{-}v_{\lambda}$ 
in the notation of \cite{BN} (and \cite{NS16}). 
Here, recall that $\ti{V}(\lambda)$ has the crystal basis 
$\bigl(\ti{\CL}(\lambda):=
\bigotimes_{i \in I} \CL(\vpi_{i})^{\otimes m_{i}}, 
\ti{\CB}(\lambda):=
\bigotimes_{i \in I} \CB(\vpi_{i})^{\otimes m_{i}}\bigr)$. 
We see from part (iii) above that 
the map $s_{\brho}(z^{-1})$ preserves $\ti{\CL}(\lambda)$. 
Also, we know from \cite[page 369, the 2nd line from below]{BN} that 
$\Phi_{\lambda}(\CL(\lambda)) \subset \ti{\CL}(\lambda)$. 
Therefore, we deduce that the map 
$z_{\brho}:V(\lambda) \rightarrow V(\lambda)$ preserves $\CL(\lambda)$, 
and hence induces a $\BC$-linear map 
$z_{\brho}:\CL(\lambda)/\q\CL(\lambda) \rightarrow \CL(\lambda)/\q\CL(\lambda)$; 
this map commutes with the Kashiwara operators. 
It follows from \cite[p.\,371]{BN} that
%
%
\begin{equation} \label{eq:CBlam}
\CB(\lambda) = \bigl\{
 z_{\brho}b \mid 
 \brho \in \Par(\lambda),\,b \in \CB_{0}(\lambda) \bigr\}, 
\end{equation}
where $\CB_{0}(\lambda)$ denotes the connected component 
of $\CB(\lambda)$ containing $u_{\lambda}$, and
%
%
\begin{equation} \label{eq:par}
\Par(\lambda) := 
 \bigl\{ \brho = (\rho^{(i)})_{i \in I} \mid 
 \text{$\rho^{(i)}$ is a partition of length $< m_{i}$ 
 for each $i \in I$} \bigr\};
\end{equation}
we understand that a partition of length less than $0$ is 
the empty partition $\emptyset$. For $\brho \in \Par(\lambda)$, we set 
%
%
\begin{equation} \label{eq:u-rho}
u^{\brho} := z_{\brho}u_{\lambda} \in \CB(\lambda). 
\end{equation}
%
%
\begin{rem} \label{rem:zGb}
We see from \cite[Theorem~4.16\,(ii)]{BN} (see also the argument after 
\cite[(7.3.8)]{NS16}) that $z_{\brho}G(b) = G(z_{\brho}b)$ for 
$b \in \CB_{0}(\lambda)$ and $\brho \in \ol{\Par(\lambda)}$. 
\end{rem}
%
%
\subsection{Level-zero van der Kallen modules and their graded characters.}
\label{subsec:main}

Let $\lambda \in P^{+} \subset P_{\af}^{0}$, and set 
%
%
\begin{equation} \label{eq:J}
\J=\J_{\lambda}:= 
\bigl\{ i \in I \mid \pair{\lambda}{\alpha_i^{\vee}}=0 \bigr\} \subset I.
\end{equation}
Let $\{ v_{x} \}_{x \in W_{\af}}$ be the family of 
extremal weight vectors in $V(\lambda)$ corresponding to $v_{\lambda}$ 
(see Section~\ref{subsec:extremal}). 
For each $x \in W_{\af}$, we define 
the Demazure submodule $V_{x}^{-}(\lambda)$ of $V(\lambda)$ by
%
%
\begin{equation} \label{eq:dem}
V_{x}^{-}(\lambda):=U_{\q}^{-}(\Fg_{\af})v_{x} \subset V(\lambda). 
\end{equation}
We see that the Demazure submodule $V_{x}^{-}(\lambda)$ has 
the ($\Fh_{\af}$-)weight space decomposition of the form: 
\begin{equation} \label{eq:wsd1}
V_{x}^{-}(\lambda) = 
 \bigoplus_{k \in \BZ}
\Biggl(
   \bigoplus_{\gamma \in Q} 
   V_{x}^{-}(\lambda)_{\lambda+\gamma+k\delta}
\Biggr),
\end{equation}
where each weight space $V_{x}^{-}(\lambda)_{\lambda+\gamma+k\delta}$ 
is finite-dimensional. 
Also, we know from \cite[Sect.~2.8]{Kas05} (see also \cite[Sect.~4.1]{NS16}) 
that $V_{x}^{-}(\lambda)$ is compatible 
with the global basis of $V(\lambda)$, that is, 
there exists a subset $\CB_{x}^{-}(\lambda)$ of 
the crystal basis $\CB(\lambda)$ such that 
%
%
\begin{equation} \label{eq:gb}
V_{x}^{-}(\lambda) = 
\bigoplus_{b \in \CB_{x}^{-}(\lambda)} \BC(\q) G(b) 
\subset
\bigoplus_{b \in \CB(\lambda)} \BC(\q) G(b) = V(\lambda);  
\end{equation}
by \cite[Lemma~5.4.1]{NNS1}, we have
%
%
\begin{equation} \label{eq:cb1}
\CB_{x}^{-}(\lambda) = 
 \bigl\{ z_{\brho}b \mid \brho \in \Par(\lambda),\,
 b \in \CB_{x}^{-}(\lambda) \cap \CB_{0}(\lambda)
\bigr\}.
\end{equation}
%
%
\begin{rem} \label{rem:dem}
By \cite[Lemma~4.1.2]{NS16}, we have
$V_{x}^{-}(\lambda)=V_{\PJ(x)}^{-}(\lambda)$ for $x \in W_{\af}$. 
Also, it follows from \cite[Lemma~5.2.3]{NS16} that 
for $x,y \in (\WJu)_{\af}$, 
%
%
\begin{equation} \label{eq:subset}
V_{y}^{-}(\lambda) \subset V_{x}^{-}(\lambda) \iff 
\CB_{y}^{-}(\lambda) \subset \CB_{x}^{-}(\lambda) \iff 
y \sige x.
\end{equation}
\end{rem}

Now, for $w \in \WJu$, we define a quotient 
$U_{\q}^{-}(\Fg_{\af})$-module (level-zero van der Kallen module) 
of $V_{w}^{-}(\lambda)$ by
%
%
\begin{equation} \label{eq:Kg}
\Kg_{w}^{-}(\lambda):=
 V_{w}^{-}(\lambda)\Biggm/
 \sum_{ z \in \WJu, \, z > w } 
 V_{z}^{-}(\lambda); 
\end{equation}
note that $V_{z}^{-}(\lambda) \subset V_{w}^{-}(\lambda)$ for all 
$z \in \WJu$ such that $z > w$ (see \eqref{eq:subset} and Remark~\ref{rem:SB}).
Also, for each $w \in \WJu$, we define another quotient $U_{\q}^{-}(\Fg)$-module 
of $V_{w}^{-}(\lambda)$ by
%
%
\begin{equation} \label{eq:Kl}
\Kl_{w}^{-}(\lambda):=
 V_{w}^{-}(\lambda)\Biggm/
 \Biggl( 
 \sum_{ z \in \WJu, \, z > w }
 V_{z}^{-}(\lambda) + 
 \sum_{\brho \in \ol{\Par(\lambda)},\,\brho \ne (\emptyset)_{i \in I}}
 z_{\brho} V_{w}^{-}(\lambda) \Biggr),
\end{equation}
where $z_{\brho}:V(\lambda) \rightarrow V(\lambda)$ is as in \eqref{eq:zrho}. 
Here we know from \cite[(5.15) and (5.16)]{NNS1} that if we set 
\begin{equation}
X_{w}^{-}(\lambda):=
 \sum_{\brho \in \ol{\Par(\lambda)},\,\brho \ne (\emptyset)_{i \in I}}
 z_{\brho} V_{w}^{-}(\lambda),
\end{equation}
then $X_{w}^{-}(\lambda) = \bigoplus_{b \in \CB(X_{w}^{-}(\lambda))} \BC(\q)G(b)$, 
where 
%
%
\begin{equation} \label{eq:cb2}
\CB(X_{w}^{-}(\lambda)):=\bigl\{ z_{\brho}b \mid 
 \brho \in \Par(\lambda),\,\brho \ne (\emptyset)_{i \in I},\,
 b \in \CB_{w}^{-}(\lambda) \cap \CB_{0}(\lambda)
\bigr\}.
\end{equation}
By \eqref{eq:cb1}, we see that 
$\CB(X_{w}^{-}(\lambda)) \subset \CB_{w}^{-}(\lambda)$, and hence
$X_{w}^{-}(\lambda) \subset V_{w}^{-}(\lambda)$.
Observe that $\Kg_{w}^{-}(\lambda)$ and $\Kl_{w}^{-}(\lambda)$ have
the ($\Fh_{\af}$-)weight space decompositions induced by that of 
$V_{w}^{-}(\lambda)$ (see \eqref{eq:wsd1}): 
\begin{equation*}
\Kg_{w}^{-}(\lambda) = 
 \bigoplus_{k \in \BZ}
\Biggl(
   \bigoplus_{\gamma \in Q} 
   \Kg_{w}^{-}(\lambda)_{\lambda+\gamma+k\delta}
\Biggr), \qquad
\Kl_{w}^{-}(\lambda) = 
 \bigoplus_{k \in \BZ}
\Biggl(
   \bigoplus_{\gamma \in Q} 
   \Kl_{w}^{-}(\lambda)_{\lambda+\gamma+k\delta}
\Biggr).
\end{equation*}
By putting $q:=\be^{\delta}$, we define
\begin{equation*}
\gch \Kg_{w}^{-}(\lambda):=
\sum_{\gamma \in Q,\,k \in \BZ} 
\bigl( \dim \Kg_{w}^{-}(\lambda)_{\lambda+\gamma+k\delta} \bigr) 
\be^{\lambda+\gamma} q^{k}, 
\end{equation*}
\begin{equation*}
\gch \Kl_{w}^{-}(\lambda):=
\sum_{\gamma \in Q,\,k \in \BZ} 
\bigl( \dim \Kl_{w}^{-}(\lambda)_{\lambda+\gamma+k\delta} \bigr) 
\be^{\lambda+\gamma} q^{k}.
\end{equation*}
The following is the main result of this paper. 
%
%
\begin{thm}[cf. {\cite[Corollaries~3.19 and 3.20]{FKM}}] \label{thm:main}
Let $\lambda \in P^{+}$, and set 
$S=S_{\lambda}:=\bigl\{ i \in I \mid \pair{\lambda}{\alpha_{i}^{\vee}}=0 \bigr\}$. 
For $w \in \WJu$, the graded character $\gch \Kg_{w}^{-}(\lambda)$ can be 
expressed as{\rm:}
%
%
\begin{equation} \label{eq:main1}
\gch \Kg_{w}^{-}(\lambda) = 
\left(
 \prod_{i \in I} 
 \prod_{r=1}^{\pair{\lambda}{\alpha_{i}^{\vee}}-\eps_{i}} (1 - q^{-r}) \right)^{-1}
 E_{w\lambda}(q,\,\infty), 
\end{equation}
where $E_{w\lambda}(q,\infty)$ is the specialization of 
the nonsymmetric Macdonald polynomial $E_{w\lambda}(q,t)$ at $t=\infty$, 
and for $i \in I$, 
%
%
\begin{equation} \label{eq:eps}
\eps_{i}=\eps_{i}(\xcr{w}):=
\begin{cases}
1 & \text{\rm if $\xw s_{i} > \xw$}, \\
0 & \text{\rm if $\xw s_{i} < \xw$}. 
\end{cases}
\end{equation}
Moreover, it holds that
%
%
\begin{equation} \label{eq:main2}
\gch \Kl_{w}^{-}(\lambda) = E_{w\lambda}(q,\,\infty). 
\end{equation}
\end{thm}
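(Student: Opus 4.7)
The plan is a crystal-theoretic argument: identify the crystal basis of $\Kg_{w}^{-}(\lambda)$ inside $\CB(\lambda)$, translate it to the semi-infinite Lakshmibai--Seshadri path model of \cite{NNS1}, and match the resulting graded character with the combinatorial formula for $E_{w\lambda}(q,\infty)$. By \eqref{eq:gb}, each Demazure submodule $V_{x}^{-}(\lambda)$ is spanned by a subset $\CB_{x}^{-}(\lambda)$ of the global basis, so the quotient $\Kg_{w}^{-}(\lambda)$ inherits a $\BC(\q)$-basis indexed by $\CB_{w}^{-}(\lambda)\setminus\bigcup_{z\in\WJu,\,z>w}\CB_{z}^{-}(\lambda)$. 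Using the realization from \cite{NNS1} of $\CB_{x}^{-}(\lambda)\cap\CB_{0}(\lambda)$ as the set of semi-infinite LS paths of shape $\lambda$ whose final direction $\kappa(\pi)$ satisfies $\kappa(\pi)\sige x$ (together with \eqref{eq:subset}), I would deduce that the intersection of the above set with $\CB_{0}(\lambda)$ is exactly the set of semi-infinite LS paths $\pi$ with $\kappa(\pi)\in\kq{w}$.

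Combining this with \eqref{eq:cb1} yields a clean description of the whole crystal basis of $\Kg_{w}^{-}(\lambda)$ as $\{z_{\brho}b:\brho\in\Par(\lambda),\ \kappa(b)\in\kq{w}\}$. Because each $z_{\brho}$ commutes with the Kashiwara operators and shifts weights only in the $\delta$-direction, the graded character factors as
\[
\gch\Kg_{w}^{-}(\lambda)=P_{w}(\lambda;q)\cdot\sum_{\pi:\kappa(\pi)\in\kq{w}}\be^{\wt(\pi)}q^{\deg(\pi)},
\]
where $P_{w}(\lambda;q)$ collects the $q$-contributions of the $z_{\brho}$-orbit. A standard partition generating-function calculation turns $P_{w}(\lambda;q)$ into the product $\prod_{i\in I}\prod_{r=1}^{\pair{\lambda}{\alpha_{i}^{\vee}}-\eps_{i}(\xw)}(1-q^{-r})^{-1}$, with the correction $\eps_{i}(\xw)$ reflecting how the orbit interacts with $\CB_{w}^{-}(\lambda)$ according to whether $\xw s_{i}>\xw$ or $\xw s_{i}<\xw$. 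The second factor should equal $E_{w\lambda}(q,\infty)$ via the combinatorial formula from \cite{NNS1}, once the non-recursive description of the subsets $\EQB(w)$ obtained in Section~\ref{sec:kappa} is used to identify the final-direction set appearing in that formula with $\kq{w}$; this proves \eqref{eq:main1}.

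For the companion identity \eqref{eq:main2}, I would use \eqref{eq:cb2}: the extra submodule $X_{w}^{-}(\lambda)$ is spanned precisely by those $z_{\brho}b$ with $\brho\ne(\emptyset)_{i\in I}$. Hence quotienting by it in \eqref{eq:Kl} forces $\brho=(\emptyset)_{i\in I}$ in the factorization above, which collapses $P_{w}(\lambda;q)$ to $1$ and leaves exactly the semi-infinite LS path sum over $\kq{w}$; invoking the same identification with $E_{w\lambda}(q,\infty)$ gives \eqref{eq:main2}.

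The principal obstacle is the identification alluded to in the middle paragraph: $\kq{w}$ is defined purely via the parabolic semi-infinite Bruhat order on $(\WJu)_{\af}$, while the formula from \cite{NNS1} for $E_{w\lambda}(q,\infty)$ is phrased in terms of the recursively defined sets $\EQB(w)\subset W$ together with quantum Bruhat graph data. Establishing the non-recursive semi-infinite Bruhat characterization of $\EQB(w)$ that bridges the two perspectives, and simultaneously tracking the $\eps_{i}$-shift in the Schur/partition prefactor, is the technical heart of the proof and is exactly what Section~\ref{sec:kappa} is designed to deliver.
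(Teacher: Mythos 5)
Your outline follows the same route as the paper (crystal basis of $\Kg_{w}^{-}(\lambda)$ $\to$ semi-infinite LS paths with final direction in $\kq{w}$ $\to$ the $\EQB$-description of $\kq{w}$ from Section~\ref{sec:kappa} $\to$ the quantum LS path formula for $E_{w\lambda}(q,\infty)$), but the character bookkeeping in your middle step has a genuine gap. The prefactor is \emph{not} the generating function of the $z_{\brho}$-orbit alone: the $\brho$'s range over $\Par(\lambda)$ and contribute $\prod_{i\in I}\prod_{r=1}^{\pair{\lambda}{\alpha_{i}^{\vee}}-1}(1-q^{-r})^{-1}$, with no dependence on $w$ whatsoever. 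The $\eps_{i}$-dependence enters through the \emph{other} half of $\kq{w}$: by Proposition~\ref{prop:kappa2}, the admissible final directions are $u\PJ(t_{\xi})$ with $\xi\in\wt^{\J}(w\Rightarrow u)+Q^{\vee,+}_{\Iw\setminus\J}$, and inside each connected component the fibre of $\cl$ over $\eta\in\QLS^{w\lambda,\infty}(\lambda)$ consists of the paths $X_{\eta}(t_{\xi}\cdot\pi^{C})$ with $\xi$ in that cone; the cone contributes an extra $\prod_{i\in\Iw\setminus\J}(1-q^{-\pair{\lambda}{\alpha_{i}^{\vee}}})^{-1}$, and only the product of the two contributions (the paper's $\Par_{w}(\lambda)$ and \eqref{eq:wtXS}) yields $\prod_{i}\prod_{r=1}^{\pair{\lambda}{\alpha_{i}^{\vee}}-\eps_{i}}(1-q^{-r})^{-1}$. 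Correspondingly, your ``second factor'', a sum over semi-infinite LS paths with $\kappa(\pi)\in\kq{w}$ (even restricted to $\SLS_{0}(\lambda)$), is \emph{not} $E_{w\lambda}(q,\infty)$: Theorem~\ref{thm:NNS} is a sum over quantum LS paths weighted by $q^{\deg_{w\lambda}}$, and the projection $\cl$ is many-to-one precisely by the cone directions. As written, each of your two factors is incorrect even though their product happens to be right, so the clean orbit factorization must be replaced by the fibrewise analysis over $\QLS(\lambda)$.

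This misattribution becomes fatal for \eqref{eq:main2}. Quotienting out only the classes $z_{\brho}b$ with $\brho\in\Par(\lambda)$, $\brho\ne(\emptyset)_{i\in I}$, leaves, besides the base points, all the cone translates inside $\SLS_{0}(\lambda)$, so your argument would produce $E_{w\lambda}(q,\infty)\prod_{i\in\Iw\setminus\J}(1-q^{-\pair{\lambda}{\alpha_{i}^{\vee}}})^{-1}$ rather than $E_{w\lambda}(q,\infty)$. The missing point is that $X_{w}^{-}(\lambda)$ is built from $\brho\in\ol{\Par(\lambda)}$, not $\Par(\lambda)$: the full-length columns act essentially as translations preserving each connected component, so the quotient in \eqref{eq:Kl} also kills the nontrivial cone directions, and (as in \cite[Theorem~5.12]{NNS1}, used in Section~\ref{subsec:prf-main2}) the surviving paths are exactly the base points $X_{\eta}(t_{\wt^{\J}(w\Rightarrow\kappa(\eta))}\cdot\pi_{\lambda})$ for $\eta\in\QLS^{w\lambda,\infty}(\lambda)$, whose generating sum is $E_{w\lambda}(q,\infty)$. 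Finally, deferring the identification of $\kq{w}$ to Section~\ref{sec:kappa} is legitimate (it is Propositions~\ref{prop:kappa} and \ref{prop:kappa2}), but be aware that this is the technical core of the paper's argument, and even granting it, the character computation above still has to be carried out as indicated.
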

%
%
\begin{rem} \label{rem:Iw}
Keep the notation and setting of the theorem above. 
We see by \eqref{eq:mcr} that 
$\xw \alpha_{i} \in \Delta^{-}$ for all $i \in \J$ 
since $\xw = \mcr{w} \lng(\J)$. 
Hence it follows that $\xw s_{i} < \xw$ for all $i \in \J$. 
\end{rem}

We will give a proof of Theorem~\ref{thm:main} in Section~\ref{sec:prf-main}. 
%
%
\section{Semi-infinite Lakshmibai-Seshadri paths}
\label{sec:SLS}
%
%
\subsection{Crystal structure on semi-infinite LS paths.}
\label{subsec:SLS}

In this subsection, we fix $\lambda \in P^{+} \subset P_{\af}^{0}$ 
(see \eqref{eq:P-fin} and \eqref{eq:P}), 
and take $\J=\J_{\lambda}$ as in \eqref{eq:J}. 
%
%
\begin{dfn} \label{dfn:SBa}
For a rational number $0 < a < 1$, 
we define $\SBa$ to be the subgraph of $\SBJ$ 
with the same vertex set but having only 
those directed edges of the form
$x \edge{\beta} y$ for which 
$a\pair{x\lambda}{\beta^{\vee}} \in \BZ$ holds.
\end{dfn}
%
%
\begin{dfn}\label{dfn:SLS}
A semi-infinite Lakshmibai-Seshadri (LS for short) path of 
shape $\lambda $ is a pair 
%
%
\begin{equation} \label{eq:SLS}
\pi = (\bx \,;\, \ba) 
     = (x_{1},\,\dots,\,x_{s} \,;\, a_{0},\,a_{1},\,\dots,\,a_{s}), \quad s \ge 1, 
\end{equation}
of a strictly decreasing sequence $\bx : x_1 \sig \cdots \sig x_s$ 
of elements in $(\WJu)_{\af}$ and an increasing sequence 
$\ba : 0 = a_0 < a_1 < \cdots  < a_s =1$ of rational numbers 
satisfying the condition that there exists a directed path 
from $x_{u+1}$ to  $x_{u}$ in $\SBb{a_{u}}$ 
for each $u = 1,\,2,\,\dots,\,s-1$. 
\end{dfn}

We denote by $\SLS(\lambda)$ 
the set of all semi-infinite LS paths of shape $\lambda$.
Following \cite[Sect.~3.1]{INS} (see also \cite[Sect.~2.4]{NS16}), 
we endow the set $\SLS(\lambda)$ 
with a crystal structure with weights in $P_{\af}$ as follows. 
Let $\pi \in \SLS(\lambda)$ be of the form \eqref{eq:SLS}. 
We define $\ol{\pi}:[0,1] \rightarrow \BR \otimes_{\BZ} P_{\af}$ 
to be the piecewise-linear, continuous map 
whose ``direction vector'' for the interval 
$[a_{u-1},\,a_{u}]$ is $x_{u}\lambda \in P_{\af}$ 
for each $1 \le u \le s$, that is, 
%
%
\begin{equation} \label{eq:olpi}
\ol{\pi} (t) := 
\sum_{k = 1}^{u-1}(a_{k} - a_{k-1}) x_{k}\lambda + (t - a_{u-1}) x_{u}\lambda
\quad
\text{for $t \in [a_{u-1},\,a_u]$, $1 \le u \le s$}. 
\end{equation}
We know from \cite[Proposition~3.1.3]{INS} that $\ol{\pi}$ 
is an (ordinary) LS path of shape $\lambda$, introduced in \cite[Sect.~4]{Lit95}. 
We set
%
%
\begin{equation} \label{eq:wt}
\wt (\pi):= \ol{\pi}(1) = \sum_{u = 1}^{s} (a_{u}-a_{u-1})x_{u}\lambda \in P_{\af}.
\end{equation}
We define root operators $e_{i}$, $f_{i}$, $i \in I_{\af}$, 
in the same manner as in \cite[Sect.~2]{Lit95}. Set 
%
%
\begin{equation} \label{eq:H}
\begin{cases}
H^{\pi}_{i}(t) := \pair{\ol{\pi}(t)}{\alpha_{i}^{\vee}} \quad 
\text{for $t \in [0,1]$}, \\[1.5mm]
m^{\pi}_{i} := 
 \min \bigl\{ H^{\pi}_{i} (t) \mid t \in [0,1] \bigr\}. 
\end{cases}
\end{equation}
As explained in \cite[Remark~2.4.3]{NS16}, 
all local minima of the function $H^{\pi}_{i}(t)$, $t \in [0,1]$, 
are integers; in particular, 
the minimum value $m^{\pi}_{i}$ is a nonpositive integer 
(recall that $\ol{\pi}(0)=0$, and hence $H^{\pi}_{i}(0)=0$).
We define $e_{i}\pi$ as follows. 
If $m^{\pi}_{i}=0$, then we set $e_{i} \pi := \bzero$, 
where $\bzero$ is an additional element not 
contained in any crystal. 
If $m^{\pi}_{i} \le -1$, then we set
%
%
\begin{equation} \label{eq:t-e}
\begin{cases}
t_{1} := 
  \min \bigl\{ t \in [0,\,1] \mid 
    H^{\pi}_{i}(t) = m^{\pi}_{i} \bigr\}, \\[1.5mm]
t_{0} := 
  \max \bigl\{ t \in [0,\,t_{1}] \mid 
    H^{\pi}_{i}(t) = m^{\pi}_{i} + 1 \bigr\}; 
\end{cases}
\end{equation}
notice that $H^{\pi}_{i}(t)$ is 
strictly decreasing on the interval $[t_{0},\,t_{1}]$. 
Let $1 \le p \le q \le s$ be such that 
$a_{p-1} \le t_{0} < a_p$ and $t_{1} = a_{q}$. 
Then we define $e_{i}\pi$ to be
%
%
\begin{equation} \label{eq:epi}
\begin{split}
& e_{i} \pi := ( 
  x_{1},\,\ldots,\,x_{p},\,s_{i}x_{p},\,s_{i}x_{p+1},\,\ldots,\,
  s_{i}x_{q},\,x_{q+1},\,\ldots,\,x_{s} ; \\
& \hspace*{40mm}
  a_{0},\,\ldots,\,a_{p-1},\,t_{0},\,a_{p},\,\ldots,\,a_{q}=t_{1},\,
\ldots,\,a_{s});
\end{split}
\end{equation}
if $t_{0} = a_{p-1}$, then we drop $x_{p}$ and $a_{p-1}$, and 
if $s_{i} x_{q} = x_{q+1}$, then we drop $x_{q+1}$ and $a_{q}=t_{1}$.
Similarly, we define $f_{i}\pi$ as follows. 
Note that $H^{\pi}_{i}(1) - m^{\pi}_{i}$ is a nonnegative integer. 
If $H^{\pi}_{i}(1) - m^{\pi}_{i} = 0$, then we set $f_{i} \pi := \bzero$. 
If $H^{\pi}_{i}(1) - m^{\pi}_{i}  \ge 1$, then we set
%
%
\begin{equation} \label{eq:t-f}
\begin{cases}
t_{0} := 
 \max \bigl\{ t \in [0,1] \mid H^{\pi}_{i}(t) = m^{\pi}_{i} \bigr\}, \\[1.5mm]
t_{1} := 
 \min \bigl\{ t \in [t_{0},\,1] \mid H^{\pi}_{i}(t) = m^{\pi}_{i} + 1 \bigr\};
\end{cases}
\end{equation}
notice that $H^{\pi}_{i}(t)$ is 
strictly increasing on the interval $[t_{0},\,t_{1}]$. 
Let $0 \le p \le q \le s-1$ be such that $t_{0} = a_{p}$ and 
$a_{q} < t_{1} \le a_{q+1}$. Then we define $f_{i}\pi$ to be
%
%
\begin{equation} \label{eq:fpi}
\begin{split}
& f_{i} \pi := ( x_{1},\,\ldots,\,x_{p},\,s_{i}x_{p+1},\,\dots,\,
  s_{i} x_{q},\,s_{i} x_{q+1},\,x_{q+1},\,\ldots,\,x_{s} ; \\
& \hspace{40mm} 
  a_{0},\,\ldots,\,a_{p}=t_{0},\,\ldots,\,a_{q},\,t_{1},\,
  a_{q+1},\,\ldots,\,a_{s});
\end{split}
\end{equation}
if $t_{1} = a_{q+1}$, then we drop $x_{q+1}$ and $a_{q+1}$, and 
if $x_{p} = s_{i} x_{p+1}$, then we drop $x_{p}$ and $a_{p}=t_{0}$.
In addition, we set $e_{i} \bzero = f_{i} \bzero := \bzero$ 
for all $i \in I_{\af}$.
%
%
\begin{thm}[{see \cite[Theorem~3.1.5]{INS}}] \label{thm:SLS}
\mbox{}
\begin{enu}
\item The set $\SLS(\lambda) \sqcup \{ \bzero \}$ is 
stable under the action of the root operators 
$e_{i}$ and $f_{i}$, $i \in I_{\af}$.

\item For each $\pi \in \SLS(\lambda)$ 
and $i \in I_{\af}$, we set 
\begin{equation*}
\begin{cases}
\ve_{i} (\pi) := 
 \max \bigl\{ n \ge 0 \mid e_{i}^{n} \pi \neq \bzero \bigr\}, \\[1.5mm]
\vp_{i} (\pi) := 
 \max \bigl\{ n \ge 0 \mid f_{i}^{n} \pi \neq \bzero \bigr\}.
\end{cases}
\end{equation*}
Then, the set $\SLS(\lambda)$, 
equipped with the maps $\wt$, $e_{i}$, $f_{i}$, $i \in I_{\af}$, 
and $\ve_{i}$, $\vp_{i}$, $i \in I_{\af}$, 
defined above, is a crystal with weights in $P_{\af}$.
\end{enu}
\end{thm}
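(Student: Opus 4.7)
The plan is to verify parts (1) and (2) separately, with essentially all of the work concentrated in part (1); part (2) then follows from routine checks of the crystal axioms modeled on Littelmann's arguments for ordinary LS paths. The guiding observation, visible already from \eqref{eq:olpi}, is that the piecewise-linear map $\ol{\pi}$ attached to $\pi \in \SLS(\lambda)$ is an ordinary (affine) LS path of shape $\lambda$ in Littelmann's sense, and that the cut-and-reflect recipe in \eqref{eq:t-e}--\eqref{eq:fpi} is designed so that $\ol{e_i\pi} = e_i\ol{\pi}$ and $\ol{f_i\pi} = f_i\ol{\pi}$ on the level of ordinary LS paths. Thus the substantive content of part (1) is to show that the lifts $x_k \in (\WJu)_{\af}$ and the chain conditions in the subgraphs $\SBb{a_u}$ survive the procedure.

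For part (1), fix $i \in I_{\af}$ and $\pi \in \SLS(\lambda)$ with $f_i\pi \ne \bzero$; the argument for $e_i$ is symmetric. By construction, $f_i\pi$ modifies $\pi$ only on the interval $[t_0, t_1]$ where $H_i^\pi$ climbs from $m_i^\pi$ to $m_i^\pi + 1$, inserting (if necessary) new break points $t_0, t_1$ and replacing each direction $x_k$ by $s_i x_k$ on the middle block. Three properties need to be verified. First, each reflected element $s_i x_k$ must still lie in $(\WJu)_{\af}$: using \eqref{eq:Pet}, the failure would require $x_k \beta = \alpha_i$ for some $\beta \in (\DeJ)_{\af}^+$, and one rules this out via the monotonicity of $H_i^\pi$ on $[t_0, t_1]$ together with the integrality condition imposed by the subgraphs $\SBb{a_u}$. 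Second, each consecutive pair must be connected by a directed path in the corresponding $\SBb{a_u}$; this is the key technical step and rests on a \emph{left-reflection lifting property}: given a directed edge $x \edge{\beta} y$ in $\SBa$, the pair $(s_i x, s_i y)$ is also connected by a directed edge in $\SBa$, with an orientation depending on whether $s_i\beta \in \prr$. One proves this by analyzing individual covers according to the sign of $s_i\beta$, using that the integrality condition $a \pair{x\lambda}{\beta^\vee} \in \BZ$ is invariant under $s_i$ and that $\sell(s_i z) = \sell(z) \pm 1$. Third, the strict-decrease condition $s_i x_{k+1} \sil s_i x_k$ in the reflected block follows automatically from the lifted chain. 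The boundary splices at $t_0, t_1$ are handled by single edges of the form $x_p \edge{\beta} s_i x_p$ for a suitable real affine root $\beta$ with finite part $\alpha_i$, which exist in $\SBb{t_0}, \SBb{t_1}$ precisely because $t_0, t_1$ are the extremal points of $H_i^\pi$ where the integer-value condition holds.

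Once part (1) is established, part (2) amounts to routine verifications. The weight identity $\wt(f_i\pi) = \wt(\pi) - \alpha_i$ is immediate from \eqref{eq:wt} and the fact that reflecting the direction vector on $[t_0, t_1]$ by $s_i$ changes $\ol{\pi}(1)$ by $-(H_i^\pi(t_1) - H_i^\pi(t_0))\alpha_i = -\alpha_i$. Iterating the cut-and-reflect procedure yields $\ve_i(\pi) = -m_i^\pi$ and $\vp_i(\pi) = H_i^\pi(1) - m_i^\pi$, whence $\vp_i(\pi) - \ve_i(\pi) = \pair{\wt\pi}{\alpha_i^\vee}$. The inverse relations $e_i f_i \pi = \pi$ and $f_i e_i \pi = \pi$ (whenever defined) hold because $e_i$ applied to $f_i\pi$ identifies exactly the same critical points $t_0 < t_1$ and literally undoes the reflection. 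The main obstacle in the whole argument is the left-reflection lifting property for the parabolic semi-infinite Bruhat graph $\SBJ$ used in the second point above: it is the semi-infinite analog of the classical lifting property for the ordinary Bruhat order, but its proof is delicate because one must simultaneously control the quantum-level integrality constraint defining $\SBa$, the semi-infinite length function $\sell$, and the conditions under which $s_i$ preserves the Peterson coset system $(\WJu)_{\af}$.
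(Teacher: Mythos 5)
You should first note that this paper does not prove Theorem~\ref{thm:SLS} at all: it is quoted from \cite[Theorem~3.1.5]{INS}, so the only proof to compare with is the one in \cite{INS}, whose technical core is a family of ``diamond'' lemmas for the (parabolic, $a$-integral) semi-infinite Bruhat graph, i.e.\ the integrality-compatible versions of Lemma~\ref{lem:dia-si}, together with Lemma~\ref{lem:236} guaranteeing that reflected directions stay in $(\WJu)_{\af}$. Your outline follows the same broad strategy (reflect the middle block where $H^{\pi}_{i}$ is monotone, re-splice, then check the axioms), and your treatment of the first point (membership of $s_{i}x_{k}$ in $(\WJu)_{\af}$ via the nonvanishing of $\pair{x_{k}\lambda}{\alpha_{i}^{\vee}}$) is essentially right.

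However, the key lemma you rely on is not correct as stated, and the place where it fails is exactly where the real work lies. Your ``left-reflection lifting property'' --- that any directed edge $x \edge{\beta} y$ of $\SBa$ is sent by left multiplication by $s_{i}$ to a directed edge between $s_{i}x$ and $s_{i}y$ --- is false without sign hypotheses: already in type $A_{2}$ with $\J=\emptyset$, the edge $s_{1} \edge{\alpha_{2}} s_{2}s_{1}$ of $\SB$ is sent by $s_{1}$ to the pair $(e,\lng)$, whose semi-infinite lengths differ by $3$, so it is not an edge (here $x^{-1}\alpha_{1}\in\Delta^{-}$ while $y^{-1}\alpha_{1}\in\Delta^{+}$). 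The statement does hold when $\pair{x\lambda}{\alpha_{i}^{\vee}}$ and $\pair{y\lambda}{\alpha_{i}^{\vee}}$ are both positive (or both negative), which covers the interior of the reflected block, but your claim that both boundary splices are single edges of the form $x \edge{\beta} s_{i}x$ is wrong at the minimum-attaining end: for $f_{i}$ the consecutive pair at $t_{0}=a_{p}$ is $(x_{p},\,s_{i}x_{p+1})$ with $\pair{x_{p}\lambda}{\alpha_{i}^{\vee}}\le 0$ and $\pair{x_{p+1}\lambda}{\alpha_{i}^{\vee}}>0$, and these two elements need not be related by any reflection; one must deduce a directed path from $s_{i}x_{p+1}$ to $x_{p}$ in $\SBb{a_{p}}$ from the given path from $x_{p+1}$ to $x_{p}$ (symmetrically at the $t_{1}$ end for $e_{i}$). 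That is precisely the mixed-sign, path-level diamond lemma --- Lemma~\ref{lem:dia-si}\,(1),(2) refined so as to respect the integrality condition defining $\SBb{a}$ --- whose proof is the substance of \cite[Theorem~3.1.5]{INS} (building on \cite{NS16}); as written, your proposal assumes this step rather than proving it, so there is a genuine gap.
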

%
%

Finally, if $\pi \in \SLS(\lambda)$ is of the form \eqref{eq:SLS}, 
then we set 
$\kappa(\pi):=x_{s} \in (\WJu)_{\af}$, 
and call it the final direction of $\pi$. 
For $x \in W_{\af}$, we set
%
%
\begin{equation} \label{eq:SLS-dem}
\SLS_{\sige x}(\lambda) := 
 \bigl\{
   \pi \in \SLS(\lambda) \mid \kappa(\pi) \sige \PJ(x)
 \bigr\}. 
\end{equation}
%
%
\subsection{Realization of the crystal bases of Demazure submodules by semi-infinite LS paths.}
\label{subsec:Parp}

As in the previous subsection, 
we fix $\lambda \in P^{+}$, and take $\J=\J_{\lambda}$ as in \eqref{eq:J}. 
We write $\lambda \in P^{+}$ as 
$\lambda = \sum_{i \in I} m_{i} \varpi_{i}$ 
with $m_{i} \in \BZ_{\ge 0}$ for $i \in I$; 
recall the definitions of 
$\ol{\Par(\lambda)}$ and $\Par(\lambda)$ 
from \eqref{eq:olpar} and \eqref{eq:par}, respectively.
For $\brho = (\rho^{(i)})_{i \in I} \in \ol{\Par(\lambda)}$, we set 
$|\brho|:=\sum_{i \in I} |\rho^{(i)}|$, where for a partition 
$\chi = (\chi_1 \ge \chi_2 \ge \cdots \ge \chi_{m})$, 
we set $|\chi| := \chi_{1}+\cdots+\chi_{m}$. 
We equip the set $\Par(\lambda)$ with a crystal structure as follows: 
for each $\brho = (\rho^{(i)})_{i \in I} \in \Par(\lambda)$, we set
\begin{equation}
\begin{cases}
e_{j} \brho = f_{j} \brho := \bzero, \quad 
\ve_{j} (\brho) = \vp_{j} (\brho) := -\infty 
  & \text{for $j \in I_{\af}$}, \\[1.5mm]
\wt(\brho) := - |\brho| \delta. &
\end{cases}
\end{equation}

Let $\Conn(\SLS(\lambda))$ denote the set of 
all connected components of $\SLS(\lambda)$, 
and let $\SLS_{0}(\lambda) \in \Conn(\SLS(\lambda))$ denote 
the connected component of $\SLS(\lambda)$
containing $\pi_{\lambda}:=(e\,;\,0,\,1) \in \SLS(\lambda)$, 
where $e$ is the identity element of $W_{\af}$.
%
%
\begin{prop} \label{prop:SLS}
Keep the notation and setting above. 
\begin{enu}

\item Each connected component $C \in \Conn(\SLS(\lambda))$ 
of $\SLS(\lambda)$ contains a unique element of the form{\rm:}
%
%
\begin{equation} \label{eq:etaC}
\pi^{C} = 
 (\PJ(t_{\xi_1}),\, 
  \PJ(t_{\xi_2}),\,\dots,\,\PJ(t_{\xi_{s-1}}),\,e \,;\, 
  a_{0},\,a_{1},\,\dots,\,a_{s-1},\,a_{s})
\end{equation}
for some $s \ge 1$ and $\xi_{1},\,\xi_{2},\,\ldots,\,\xi_{s-1} \in Q^{\vee,+}$ 
{\rm (see \cite[Proposition~7.1.2]{INS})}. 

\item There exists a bijection 
$\Theta:\Conn(\SLS(\lambda)) \rightarrow \Par(\lambda)$ such that 
$\wt(\pi^{C})=\lambda-|\Theta(C)|\delta = \lambda+\wt(\Theta(C))$ 
{\rm (see \cite[Proposition~7.2.1 and its proof]{INS})}. 

\item Let $C \in \Conn(\SLS(\lambda))$. 
Then, there exists an isomorphism $C \stackrel{\sim}{\rightarrow}
\bigl\{\Theta(C)\bigr\} \otimes \SLS_{0}(\lambda)$ of crystals that maps 
$\pi^{C}$ to $\Theta(C) \otimes \pi_{\lambda}$. 
Consequently, $\SLS(\lambda)$ is isomorphic as a crystal  
to $\Par(\lambda) \otimes \SLS_{0}(\lambda)$ 
{\rm (see \cite[Proposition~3.2.4 and its proof]{INS})}. 
\end{enu}
\end{prop}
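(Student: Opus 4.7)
The plan is to prove parts (1)--(3) in sequence, using the explicit combinatorial description of the root operators given by \eqref{eq:epi} and \eqref{eq:fpi}, together with the structural results on $(\WJu)_{\af}$ from Lemma~\ref{lem:PiJ}.

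For part (1), I would proceed in two stages. First, for existence, I would start with an arbitrary $\pi \in C$ and produce the canonical form by a sequence of root operator applications. The key observation is that the root operators $e_{i},f_{i}$ act on the directions $x_{u}$ essentially by left multiplication by $s_{i}$; so by a descent argument on $\sell(\kappa(\pi))$ one reduces to a path whose final direction is $e$. Next, I would exploit the definition of $\SBJ$ and the chain condition in Definition~\ref{dfn:SLS} to show that, once $\kappa(\pi) = e$, the intermediate directions $x_{1} \sig \cdots \sig x_{s-1} \sig e$ are automatically of the form $\PJ(t_{\xi_{k}})$ with $\xi_{k} \in Q^{\vee,+}$. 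For this, the identification of $(\WJu)_{\af}$ in Lemma~\ref{lem:PiJ}(1),(2) together with an analysis of which elements of $\{w\PJ(t_{\xi})\}$ lie $\sig e$ is central. Second, for uniqueness within $C$, I would check that, since $\pair{e\lambda}{\alpha_{i}^{\vee}} \ge 0$ for every $i \in I_{\af}$ (in fact $>0$ only for $i \notin \J$), the tail portion of $\pi^{C}$ pins down where $m_{i}^{\pi}$ can be achieved and forces the data $(\xi_{1},\dots,\xi_{s-1};a_{0},\dots,a_{s})$ to be rigid inside $C$.

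For part (2), I would define the bijection $\Theta$ explicitly by reading off partitions from the canonical data. Writing each $\xi_{u} = \sum_{i \in \Jc} c_{u}^{(i)} \alpha_{i}^{\vee}$ modulo $\QJv$, one defines, for each $i \in I$, the partition $\rho^{(i)}$ so that its parts record the weighted jumps in the sequence $c_{u}^{(i)}$ against the coefficients $(a_{u})$. Using the computation $\PJ(t_{\xi})\lambda = \lambda - \pair{\lambda}{\xi}\delta$ (which follows from \eqref{eq:wtmu} and from the fact that $\WJ$ together with $\QJv$ fixes $\lambda$, by \eqref{eq:J}), one obtains directly
\[
\wt(\pi^{C}) = \lambda - \left(\sum_{u=1}^{s-1}(a_{u}-a_{u-1})\pair{\lambda}{\xi_{u}}\right)\delta,
\]
and a combinatorial identity identifies the quantity in parentheses with $|\Theta(C)| = \sum_{i \in I} |\rho^{(i)}|$. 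Surjectivity of $\Theta$ requires constructing, for each $\brho \in \Par(\lambda)$, an explicit canonical path whose image is $\brho$; injectivity then follows from the explicit definition and from the rigidity established in part (1).

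For part (3), I would construct the crystal isomorphism $C \xrightarrow{\sim} \{\Theta(C)\} \otimes \SLS_{0}(\lambda)$ by transport of structure: given any $\pi \in C$, choose a sequence of root operators taking $\pi$ to $\pi^{C}$, and apply the same sequence (interpreted via the tensor product rule) to $\pi_{\lambda} \in \SLS_{0}(\lambda)$. The assignment $\pi \mapsto \Theta(C) \otimes (\cdot)$ is shown to be well-defined (independent of the choice of sequence, by the rigidity of $\pi^{C}$), weight-preserving (by part (2), since $\wt(\Theta(C)) = -|\Theta(C)|\delta$ absorbs the $\delta$-shift), and equivariant under the $e_{i},f_{i}$. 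The hard part I expect is the uniqueness clause in part (1): in the semi-infinite setting the minimum of $H_{i}^{\pi}$ may be attained at several points, so two \emph{a priori} different canonical forms might in principle lie in the same component. Ruling this out requires a careful analysis of the possible raising sequences between two putative canonical forms and showing that any such sequence must ultimately be trivial -- this is the step where the tension between the combinatorics of $\SLS(\lambda)$ and the semi-infinite Bruhat order on $(\WJu)_{\af}$ is most delicate.
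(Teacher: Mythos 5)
First, a remark on the comparison itself: the paper does not prove Proposition~\ref{prop:SLS} at all --- it is quoted, with pointers, from \cite{INS} (Propositions~7.1.2, 7.2.1 and 3.2.4 there), so your attempt has to be measured against those (long) proofs rather than against anything in this text.

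Your outline contains a genuine error at its base, in part (1). You reduce existence to the claim that once $\kappa(\pi)=e$, the chain condition of Definition~\ref{dfn:SLS} automatically forces the remaining directions to be of the form $\PJ(t_{\xi_k})$ with $\xi_k\in Q^{\vee,+}$. This is false: $\kappa(\pi)=e$ only forces $x_u\sig e$, and by Lemma~\ref{lem:NNS} the elements $\sige e$ include $u\PJ(t_{\xi})$ for \emph{every} $u\in\WJu$ with $[\xi]^{\J}\ge\wt^{\J}(e\Rightarrow u)$, not just projected translations. Concretely, take $\lambda$ regular (so $\J=\emptyset$) with $\pair{\lambda}{\alpha_i^{\vee}}\in 2\BZ_{>0}$ for some $i\in I$; then
\begin{equation*}
\pi=(s_i t_{\alpha_i^{\vee}},\,e\,;\,0,\,\tfrac{1}{2},\,1)
\end{equation*}
lies in $\SLS(\lambda)$, via the directed path $e\edge{\alpha_i}s_i\edge{\delta-\alpha_i}t_{\alpha_i^{\vee}}\edge{\alpha_i}s_i t_{\alpha_i^{\vee}}$ in $\SB$, each of whose labels satisfies the $\tfrac{1}{2}$-integrality condition of Definition~\ref{dfn:SBa}; it has final direction $e$, yet its first direction is not a translation. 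So the canonical element $\pi^{C}$ is not characterized by $\kappa=e$, and both the existence of an element of the form \eqref{eq:etaC} in each component and its uniqueness need a much finer analysis --- you yourself flag uniqueness as ``the hard part'' and leave it unresolved. The later parts inherit this: in (2) the recipe for $\Theta$ is not actually pinned down (in particular, why each $\rho^{(i)}$ has length \emph{strictly} less than $m_i$, why $|\Theta(C)|$ equals $\sum_u(a_u-a_{u-1})\pair{\lambda}{\xi_u}$, and why $\Theta$ is onto $\Par(\lambda)$ are all asserted, not proved); and in (3) well-definedness of the transport-of-structure map does not follow from ``rigidity of $\pi^{C}$'' --- $\SLS(\lambda)$ is a level-zero crystal, not a highest-weight one, so independence of the chosen monomial in the root operators is exactly what has to be proved, and \cite{INS} does this by a separate argument. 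As it stands, the proposal is a plan whose central steps are either missing or incorrect.
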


We know the following from 
\cite[Theorem~3.2.1]{INS} and \cite[Theorem~4.2.1]{NS16}. 
%
%
\begin{thm} \label{thm:isom}
There exists an isomorphism 
$\Psi_{\lambda}:\CB(\lambda) \stackrel{\sim}{\rightarrow} \SLS(\lambda)$ 
of crystals satisfying the following conditions {\rm (1)} and {\rm (2)}. 
\begin{enu}
\item $\Psi_{\lambda}(u^{\brho}) = \pi^{\Theta^{-1}(\brho)}$ 
for every $\brho \in \Par(\lambda)$, where $u^{\brho}=z_{\brho}u_{\lambda} \in \CB(\lambda)$ 
is as defined in \eqref{eq:u-rho}. In particular, 
$\Psi_{\lambda}(u_{\lambda}) = \pi_{\lambda}$. 

\item $\Psi_{\lambda}(\CB_{x}^{-}(\lambda)) = \SLS_{\sige x}(\lambda)$ 
for every $x \in (\WJu)_{\af}$. 
\end{enu}
\end{thm}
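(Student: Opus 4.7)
The plan is to construct $\Psi_{\lambda}$ by exploiting the parallel decompositions of the two sides: $\CB(\lambda) = \bigsqcup_{\brho \in \Par(\lambda)} z_{\brho}\CB_{0}(\lambda)$ from \eqref{eq:CBlam} on the algebraic side, and $\SLS(\lambda) \cong \Par(\lambda) \otimes \SLS_{0}(\lambda)$ from Proposition~\ref{prop:SLS}(3) on the combinatorial side. Since $z_{\brho}$ commutes with the Kashiwara operators (see Remark~\ref{rem:zGb} and the discussion preceding \eqref{eq:CBlam}) and shifts the weight by $-|\brho|\delta$, the $z_{\brho}$-orbit structure on $\CB(\lambda)$ mirrors exactly the tensoring by the ``trivial'' crystal element $\brho \in \Par(\lambda)$ on the path side. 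Therefore it suffices to build a crystal isomorphism $\Psi_{\lambda}^{0}:\CB_{0}(\lambda) \xrightarrow{\sim} \SLS_{0}(\lambda)$ with $\Psi_{\lambda}^{0}(u_{\lambda}) = \pi_{\lambda}$, and then extend by $\Psi_{\lambda}(z_{\brho}b) := $ the element of the connected component $\Theta^{-1}(\brho)$ corresponding to $\brho \otimes \Psi_{\lambda}^{0}(b)$. Condition~(1) will then be immediate, because $u^{\brho} = z_{\brho}u_{\lambda}$ goes to $\brho \otimes \pi_{\lambda}$, which is identified with $\pi^{\Theta^{-1}(\brho)}$ by Proposition~\ref{prop:SLS}(3).

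To produce $\Psi_{\lambda}^{0}$, I would follow the tensor product strategy of \cite{INS}. First, treat the fundamental case $\lambda = \vpi_{i}$: here $\CB(\vpi_{i})$ is the KR-type extremal crystal described in Section~\ref{subsec:extremal}(i)--(iii), and its combinatorial realization matches the parabolic semi-infinite LS path crystal $\SLS(\vpi_{i})$ built from Peterson's coset representatives for $\J = I \setminus \{i\}$; the isomorphism here is given explicitly by sending the element $t_{-k\alpha_{i}^{\vee}} \cdot u_{\vpi_{i}}$ to the straight-line path $(\PJ(t_{-k\alpha_{i}^{\vee}})\,;\,0,1)$ and extending by the $e_{j},f_{j}$ action. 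Next, for general $\lambda = \sum m_{i}\vpi_{i}$, use the injective $U_{\q}(\Fg_{\af})$-homomorphism $\Phi_{\lambda}: V(\lambda) \hookrightarrow \ti{V}(\lambda)$ from \cite{BN} to embed $\CB_{0}(\lambda) \hookrightarrow \bigotimes_{i} \CB(\vpi_{i})^{\otimes m_{i}}$, and on the path side embed $\SLS_{0}(\lambda) \hookrightarrow \bigotimes_{i} \SLS(\vpi_{i})^{\otimes m_{i}}$ by concatenation of ``component paths'' (after refining the rational subdivision so each directed edge respects the finer integrality constraint coming from a single $\vpi_{i}$). Applying the tensor product of the fundamental-weight isomorphisms and checking that $u_{\lambda} \mapsto \pi_{\lambda}$ under both embeddings, the images coincide componentwise, so the isomorphism descends to $\Psi_{\lambda}^{0}:\CB_{0}(\lambda) \xrightarrow{\sim} \SLS_{0}(\lambda)$.

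For condition~(2), by \eqref{eq:cb1} one has $\CB_{x}^{-}(\lambda) = \bigsqcup_{\brho \in \Par(\lambda)} z_{\brho}\bigl(\CB_{x}^{-}(\lambda) \cap \CB_{0}(\lambda)\bigr)$, while the final direction of a path $\pi \in \SLS(\lambda)$ is invariant under the $\Par(\lambda)$-tensor decomposition in Proposition~\ref{prop:SLS}(3); hence it is enough to prove $\Psi_{\lambda}^{0}\bigl(\CB_{x}^{-}(\lambda) \cap \CB_{0}(\lambda)\bigr) = \SLS_{\sige x}(\lambda) \cap \SLS_{0}(\lambda)$ for all $x \in (\WJu)_{\af}$. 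The key identification to establish is that the extremal weight element of weight $x\lambda$ inside $\CB_{0}(\lambda)$ is mapped by $\Psi_{\lambda}^{0}$ to the straight-line path $\pi_{x\lambda} := (\PJ(x)\,;\,0,1) \in \SLS_{0}(\lambda)$: this follows inductively along reduced expressions of $\PJ(x) \in (\WJu)_{\af}$, using the compatibility between the $W_{\af}$-action \eqref{eq:W1} on $\CB_{0}(\lambda)$ and the explicit action of root operators on straight-line paths (the latter being essentially a single $s_{i}$-flip). Once this identification is made, both sides of the desired equality are seen to be the smallest subsets stable under $\{f_{j}\}_{j \in I_{\af}}$ containing the extremal elements $\{v_{y}\}$, resp.\ straight-line paths $\{\pi_{y\lambda}\}$, indexed by $y \sige x$ in $(\WJu)_{\af}$; this uses Remark~\ref{rem:SB} together with the known generation of Demazure submodules by $F_{i}$'s from $v_{x}$, and the analogous path-theoretic statement which can be deduced from the definition \eqref{eq:SLS-dem} and the explicit form \eqref{eq:fpi} of $f_{i}$.

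The main obstacle is this last identification: proving that the extremal vector $v_{x} \in \CB_{0}(\lambda)$ corresponds under $\Psi_{\lambda}^{0}$ to the straight-line path $\pi_{x\lambda}$ for every $x \in (\WJu)_{\af}$. Because $\Psi_{\lambda}^{0}$ is built through the tensor embedding $\Phi_{\lambda}$, tracking the image of $v_{x} = v_{\PJ(x)}$ requires controlling how the action of the translation part $t_{\xi}$ in $W_{\af}$ (which moves the null-root degree) interacts with the concatenation structure of paths and with the $z_{i,k}$'s. This is where one must use delicately both the construction of $\Phi_{\lambda}$ from \cite{BN} and the parabolic variant of Peterson's lemma (Lemma~\ref{lem:PiJ}), in order to pass from a statement about individual fundamental-weight factors to the tensor product and then project down to $V(\lambda)$; overcoming this is precisely the technical heart of \cite[Theorem~3.2.1]{INS} and \cite[Theorem~4.2.1]{NS16}, which together yield the stated result.
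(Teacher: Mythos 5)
You should first note that the paper contains no proof of Theorem~\ref{thm:isom} at all: it is imported wholesale, with condition (1) coming from \cite[Theorem~3.2.1]{INS} and condition (2) from \cite[Theorem~4.2.1]{NS16}. Your proposal, by your own admission, defers the ``technical heart'' to exactly those two theorems, so in effect you have reproduced the paper's treatment (a citation) wrapped in an outline of the strategy one expects those references to follow. That is consistent with how the statement functions in this paper, but it means your text is a reading guide rather than an independent proof, and the outline itself contains steps that would need substantial work if taken literally.

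Two such steps deserve to be named. First, the construction of $\Psi_{\lambda}^{0}:\CB_{0}(\lambda)\stackrel{\sim}{\rightarrow}\SLS_{0}(\lambda)$ via ``fundamental case plus concatenation embedding'' is asserted, not proved: that the map $t_{-k\alpha_{i}^{\vee}}\cdot u_{\vpi_{i}}\mapsto (\PJ(t_{-k\alpha_{i}^{\vee}})\,;\,0,1)$ extends to a crystal isomorphism for $\lambda=\vpi_{i}$, that $\SLS_{0}(\lambda)$ embeds by concatenation into $\bigotimes_{i}\SLS(\vpi_{i})^{\otimes m_{i}}$ compatibly with the root operators, and that this embedding matches the Beck--Nakajima embedding $\Phi_{\lambda}$ on crystal bases, are each nontrivial claims; establishing them is essentially the content of \cite[Theorem~3.2.1]{INS}, not a routine verification. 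Second, and more seriously, your argument for condition (2) rests on the claim that $\CB_{x}^{-}(\lambda)\cap\CB_{0}(\lambda)$ and $\SLS_{\sige x}(\lambda)\cap\SLS_{0}(\lambda)$ are both ``the smallest subsets stable under the $f_{j}$, $j\in I_{\af}$, containing the extremal elements indexed by $y\sige x$.'' Neither half of this is obvious: the stability of $\SLS_{\sige x}(\lambda)$ under the root operators requires an analysis of how $\kappa(\pi)$ changes under $f_{i}$, and the assertion that every element of $\CB_{x}^{-}(\lambda)\cap\CB_{0}(\lambda)$ is reached from an extremal element by lowering operators is a Demazure-crystal statement for level-zero Demazure modules that is exactly what \cite{NS16} has to prove (and proves by a more delicate induction along the semi-infinite Bruhat order using string-type properties of these subsets, not by a generation argument). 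If you intend your text as a proof sketch, these two points are where it currently begs the question; if you intend it only as a justification for quoting \cite{INS} and \cite{NS16}, it is fine, but then it adds nothing beyond the citation already made in the paper.
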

%
%
\begin{rem} \label{rem:weyl}
Recall from \eqref{eq:W1} that the crystal basis $\CB(\lambda)$ has 
an action of the affine Weyl group $W_{\af}$. 
Hence, by Theorem~\ref{thm:isom}, 
the crystal $\SLS(\lambda)$ also has the induced action of 
the affine Weyl group $W_{\af}$, which we denote by 
$x \cdot \pi$ for $x \in W_{\af}$ and $\pi \in \SLS(\lambda)$.
\end{rem}

For $\gamma \in Q$ and $k \in \BZ$, we set 
$\fwt(\lambda+\gamma+k\delta):=\lambda+\gamma \in P$ and 
$\qwt(\lambda+\gamma+k\delta):=k \in \BZ$. Let $w \in \WJu$. 
Then, from Theorem~\ref{thm:isom} and \eqref{eq:gb}, 
we deduce that
%
%
\begin{equation} \label{eq:gch2}
\gch \Kg_{w}^{-}(\lambda) = 
\sum_{\pi \in \Kc{w}(\lambda)} \be^{\fwt(\wt(\pi))} q^{\qwt(\wt(\pi))}, 
\end{equation}
where 
%
%
\begin{equation} \label{eq:BK}
\Kc{w}(\lambda):=
\SLS_{\sige w}(\lambda) \setminus 
  \bigcup_{z \in \WJu,\,z > w} \SLS_{\sige z}(\lambda). 
\end{equation}
If we set 
$( (\WJu)_{\af} )_{\sige x}:=\bigl\{y \in (\WJu)_{\af} \mid y \sige x\bigr\}$ 
for $x \in (\WJu)_{\af}$, and 
%
%
\begin{equation} \label{eq:kw}
\kq{w} := ( (\WJu)_{\af} )_{\sige w} \setminus 
\bigcup_{z \in \WJu,\,z > w} ( (\WJu)_{\af} )_{\sige z}, 
\end{equation}
then it is easily verified that 
%
%
\begin{equation} \label{eq:BK2}
\Kc{w}(\lambda)=
\bigl\{ \pi \in \SLS(\lambda) \mid \kappa(\pi) \in \kq{w} \bigr\}.
\end{equation}
%
%
\section{Description of the crystal bases of level-zero van der Kallen modules.}
\label{sec:kappa}
%
%
\subsection{Quantum Bruhat graph and the tilted Bruhat order.}
\label{subsec:QBG}

In this subsection, we take and fix a subset $\J$ of $I$. 

\begin{dfn}
The (parabolic) quantum Bruhat graph $\QBJ$ is 
the ($\Delta^{+} \setminus \DeJ^{+})$-labeled
directed graph whose vertices are the elements of $\WJu$, and 
whose directed edges are of the form: $u \edge{\beta} v$ 
for $u,v \in \WJu$ and $\beta \in \Delta^{+} \setminus \DeJ^{+}$ 
such that $v= \mcr{us_{\beta}}$, and such that either of 
the following (i) or (ii) holds:
\begin{enu}
\item[(i)] $\ell(v) = \ell (u) + 1$; 
\item[(ii)] $\ell(v) = \ell (u) + 1 - 2 \pair{\rho-\rho_{\J}}{\beta^{\vee}}$.
\end{enu}
An edge satisfying (i) (resp., (ii)) is called a Bruhat (resp., quantum) edge. 
When $\J=\emptyset$, we write $\QB$ for $\mathrm{QBG}(W^{\emptyset})$. 
\end{dfn}
%
%
\begin{rem} \label{rem:PQBG}
We know from \cite[Remark~6.13]{LNSSS} that for each $u,\,v \in \WJu$, 
there exists a directed path in $\QBJ$ from $u$ to $v$.
\end{rem}

Let $u,\,v \in \WJu$, and let 
$\bp:u=
 u_{0} \edge{\beta_{1}}
 u_{1} \edge{\beta_{2}} \cdots 
       \edge{\beta_{s}}
 u_{s}=v$
be a directed path in $\QBJ$ from $u$ to $v$. 
Then we define the weight of $\bp$ by
%
%
\begin{equation} \label{eq:wtdp}
\wt^{\J}(\bp) := \sum_{
 \begin{subarray}{c}
 1 \le r \le s\,; \\[1mm]
 \text{$u_{r-1} \edge{\beta_{r}} u_{r}$ is} \\[1mm]
 \text{a quantum edge}
 \end{subarray}}
\beta_{r}^{\vee} \in Q^{\vee,+}; 
\end{equation}
when $\J=\emptyset$, we write $\wt(\bp)$ for $\wt^{\emptyset}(\bp)$. 
We know the following proposition from 
\cite[Proposition~8.1 and its proof]{LNSSS}.
%
%
\begin{prop} \label{prop:81}
Let $u,\,v \in \WJu$. 
Let $\bp$ be a shortest directed path in $\QBJ$ from $u$ to $v$, 
and $\bq$ an arbitrary directed path in $\QBJ$ from $u$ to $v$. 
Then, $[\wt^{\J}(\bq)-\wt^{\J}(\bp)]^{\J} \in Q_{\Jc}^{\vee,+}$, 
where $[\,\cdot\,]^{\J} : Q^{\vee} \twoheadrightarrow Q_{\Jc}^{\vee}$ 
is as defined in Section~\ref{subsec:SiBG}. Moreover, $\bq$ is also shortest 
if and only if $[\wt^{\J}(\bq)]^{\J}=[\wt^{\J}(\bp)]^{\J}$.
\end{prop}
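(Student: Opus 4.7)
The plan is to reduce the statement to the already-established non-parabolic case $\J = \emptyset$ via a lifting argument. For $\J = \emptyset$ the result is the classical theorem of Brenti--Fomin--Postnikov on the quantum Bruhat graph: for any two directed paths $\bp, \bq$ in $\QB$ from $u$ to $v$, one has $\wt(\bq) - \wt(\bp) \in Q^{\vee,+}$ whenever $\bp$ is shortest, with equality if and only if $\bq$ is also shortest. I would take this as a known input and transport it to $\QBJ$ by tracking how paths in $\QBJ$ compare to paths in $\QB$ through the coset projection $W \to \WJu$, $w \mapsto \mcr{w}$.

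Concretely, the key step is a single-edge lifting: given a directed edge $u \edge{\beta} v$ in $\QBJ$, with $u, v \in \WJu$ and $v = \mcr{u s_\beta}$, construct a directed path in $\QB$ from $u$ to $us_\beta$ whose total weight agrees, modulo $\QJv$, with the contribution of this edge to $\wt^{\J}$ (namely $\beta^{\vee}$ if the $\QBJ$-edge is quantum, and $0$ if it is Bruhat). Writing $u s_\beta = v y$ with $y \in \WJ$ and $\ell(u s_\beta) = \ell(v) + \ell(y)$, such a lift is naturally built by concatenating the step $u \to u s_\beta$ in $\QB$ with a descent of length $\ell(y)$ from $us_\beta$ down to $v$ via Bruhat edges labeled by simple roots in $\DeJ^+$; this descent lives inside the coset $v \WJ$, and its weight contribution lies in $\QJv$, hence vanishes under $[\,\cdot\,]^{\J}$. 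Concatenating these lifts along $\bp$ and $\bq$ yields directed paths $\bp^{\mathrm{lift}}, \bq^{\mathrm{lift}}$ in $\QB$ from $u$ to $v$ whose weights project under $[\,\cdot\,]^{\J}$ to $[\wt^{\J}(\bp)]^{\J}$ and $[\wt^{\J}(\bq)]^{\J}$, respectively. Applying the BFP inequality and then projecting gives the first claim; the ``if'' direction of the moreover clause follows from the BFP uniqueness applied to the lifts, and the ``only if'' direction is direct once one verifies that a shortest path in $\QBJ$ lifts to a path that is shortest among $\QB$-paths from $u$ to $v$.

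The main obstacle will be making the single-edge lifting airtight, especially for quantum edges: one has to reconcile the $\QBJ$-quantum-edge length defect $2\pair{\rho - \rho_{\J}}{\beta^\vee}$ with the $\QB$-quantum-edge defect $2\pair{\rho}{\beta^\vee}$ against the compensating length $\ell(y)$ of the descent inside $\WJ$, and verify that such a descent in $\QB$ can always be realized using only Bruhat edges with labels in $\DeJ^+$. This length-and-weight bookkeeping, together with the claim that shortest $\QBJ$-paths correspond to shortest $\QB$-paths ending at $v \in \WJu$, is where the technical work concentrates.
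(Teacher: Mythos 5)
You are right that the natural route is to lift to the $\J=\emptyset$ case, and this is also the spirit of the source the paper relies on (the paper itself gives no proof: it quotes the statement from [LNS$^{3}$1, Proposition~8.1 and its proof], whose whole point is a lifting theorem from $\QBJ$ to $\QB$). But as written your argument has a genuine gap precisely at the single-edge lift that you defer. A ``descent'' inside the coset $v\WJ$ cannot be made of Bruhat edges: Bruhat edges of $\QB$ increase length, so the steps $x \to xs_{j}$ with $x\alpha_{j} \in \Delta^{-}$, $j \in \J$, are quantum edges with simple labels $\alpha_{j}$ (their weights do lie in $\QJv$, so the bookkeeping survives, but the description must be corrected). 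Much more seriously, the first step $u \to us_{\beta}$ need not be an edge of $\QB$ at all. Writing $us_{\beta}=vy$ with $y\in\WJ$, a Bruhat edge of $\QBJ$ has $\ell(us_{\beta})=\ell(u)+1+\ell(y)$, so $u \to us_{\beta}$ is a $\QB$-edge only when $y=e$; for a quantum edge of $\QBJ$ one needs exactly $\ell(y)=-2\pair{\rho_{\J}}{\beta^{\vee}}$. That these identities (or a modified lift through other coset representatives) always hold is the hard content of the lifting theorem in [LNS$^{3}$1]; it cannot be asserted in passing, and your sketch offers no argument for it.

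Even granting an edge-by-edge lift whose weight matches $\wt^{\J}$ modulo $\QJv$, the reduction does not close. The $\J=\emptyset$ statement compares an arbitrary path with a path that is shortest in $\QB$: it yields $[\wt(\bq^{\mathrm{lift}})]^{\J}\ge[\wt(\bp_{0})]^{\J}$ and $[\wt(\bp^{\mathrm{lift}})]^{\J}\ge[\wt(\bp_{0})]^{\J}$ for a $\QB$-shortest path $\bp_{0}$ from $u$ to $v$, which does not give $[\wt^{\J}(\bq)]^{\J}-[\wt^{\J}(\bp)]^{\J}\in Q_{\Jc}^{\vee,+}$ unless you also know $[\wt^{\J}(\bp)]^{\J}=[\wt(\bp_{0})]^{\J}$, i.e.\ that some lift of a $\QBJ$-shortest path is $\QB$-shortest, equivalently $\wt^{\J}(u\Rightarrow v)=[\wt(u\Rightarrow v)]^{\J}$ (this is Lemma~\ref{lem:wtS}, itself a nontrivial result proved with the same lifting machinery, so quoting it here would risk circularity). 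You flag this as ``where the technical work concentrates,'' but it is not a finishing detail: it is the missing key step. Finally, the ``moreover'' clause does not need BFP uniqueness or any shortest-to-shortest claim: along a path in $\QBJ$ a Bruhat edge changes length by $+1$ and a quantum edge by $1-2\pair{\rho-\rho_{\J}}{\beta^{\vee}}$, so the number of steps equals $\ell(v)-\ell(u)+2\pair{\rho-\rho_{\J}}{[\wt^{\J}]^{\J}}$ (using $\pair{\rho-\rho_{\J}}{\alpha_{j}^{\vee}}=0$ for $j\in\J$), and since $\pair{\rho-\rho_{\J}}{\alpha_{i}^{\vee}}>0$ for $i\notin\J$, the equivalence ``shortest $\iff$ equal projected weights'' follows immediately from the first claim; I recommend replacing your argument for that part by this observation and concentrating all effort on a correct proof of the edge lifting and of the comparison with a genuinely shortest path in $\QB$.
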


For $u,\,v \in \WJu$, we take a shortest directed path $\bp$ in 
$\QBJ$ from $u$ to $v$, and set 
$\wt^{\J}(u \Rightarrow v):=[\wt^{\J}(\bp)]^{\J} \in Q_{\Jc}^{\vee,+}$. 
When $\J=\emptyset$, we write $\wt(u \Rightarrow v)$ 
for $\wt^{\emptyset}(u \Rightarrow v)$. 
%
%
\begin{lem}[{\cite[Lemma~7.2]{LNSSS2}}] \label{lem:wtS} \mbox{}
Let $u,\,v \in \WJu$, and let $u_{1} \in u\WJ$, $v_{1} \in v\WJ$. 
Then we have $\wt^{\J}(u \Rightarrow v) = [\wt(u_{1} \Rightarrow v_{1})]^{S}$. 
\end{lem}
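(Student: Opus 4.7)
The strategy is a parabolic compression/lift argument relating paths in $\QB$ and $\QBJ$. First I would take a shortest directed path
\[
\mathbf{q}: u_1 = w_0 \edge{\beta_1} w_1 \edge{\beta_2} \cdots \edge{\beta_n} w_n = v_1
\]
in $\QB$, project each vertex via $y_r := \mcr{w_r} \in \WJu$, and verify that dropping the ``parabolic'' steps (those for which $y_{r-1}=y_r$, equivalently $w_r \in w_{r-1}\WJ$) produces a directed path $\bar{\mathbf{q}}: u\to v$ in $\QBJ$. The core weight identity $[\wt(\mathbf{q})]^{\J} = \wt^{\J}(\bar{\mathbf{q}})$ then follows from a bookkeeping argument: the discarded quantum edges carry coroots lying in $\QJv$, which vanish under $[\cdot]^{\J}$, while each retained quantum edge $w_{r-1}\edge{\beta_r}w_r$ contributes $[\beta_r^{\vee}]^{\J}$, agreeing modulo $\QJv$ with the coroot of the corresponding edge of $\bar{\mathbf{q}}$.

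It remains to show that $\bar{\mathbf{q}}$ is \emph{shortest} in $\QBJ$; combined with Proposition~\ref{prop:81} applied in $\QBJ$, this yields $\wt^{\J}(u\Rightarrow v) = \wt^{\J}(\bar{\mathbf{q}}) = [\wt(\mathbf{q})]^{\J} = [\wt(u_1 \Rightarrow v_1)]^{\J}$. For this I would argue by contradiction: a strictly shorter $\QBJ$-path $\mathbf{p}:u\to v$ would admit a Deodhar-type lift to $\QB$ (exploiting the length-additive decomposition $\ell(xy)=\ell(x)+\ell(y)$ for $x \in \WJu$, $y \in \WJ$), producing a $\QB$-path from $u_1$ to some $v_1' \in v\WJ$; adjusting the endpoint within $v\WJ$ by a $\WJ$-internal path and again using that $\QJv$-contributions die under $[\cdot]^{\J}$, this yields a $\QB$-path from $u_1$ to $v_1$ whose projected weight lies strictly below $[\wt(\mathbf{q})]^{\J}$ in $Q_{\Jc}^{\vee,+}$, contradicting Proposition~\ref{prop:81} applied in $\QB$. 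The independence from the chosen representatives $u_1 \in u\WJ$ and $v_1 \in v\WJ$ is then automatic, since the right-hand side has been identified with the intrinsic quantity $\wt^{\J}(u \Rightarrow v)$.

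The main obstacle will be the compression step: verifying rigorously that each non-parabolic edge $w_{r-1} \edge{\beta_r} w_r$ projects to a valid $\QBJ$-edge of the \emph{correct type} (Bruhat versus quantum), and that the coroot contribution projects as claimed. This is sensitive to whether $\beta_r$ sends $\DeJ^{+}$ into $\Delta^{+}$ or not, and requires a careful case analysis using the characterization~\eqref{eq:mcr} of $\WJu$ and the length formula. The parallel analysis needed for the Deodhar-type lift in the contradiction argument is equally delicate, and constitutes the technical heart of the proof.
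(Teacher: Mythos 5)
The paper does not actually prove this lemma; it is quoted verbatim from \cite[Lemma~7.2]{LNSSS2}, so your proposal has to stand on its own, and it does not: the compression step at its core is false. It is not true that an edge $w_{r-1}\edge{\beta_r}w_r$ of $\QB$ whose endpoints have distinct projections to $\WJu$ descends to an edge $\mcr{w_{r-1}}\to\mcr{w_r}$ of $\QBJ$. Concretely, take $\Fg$ of type $A_{2}$ and $\J=\{2\}$, so $\WJu=\{e,\,s_{1},\,s_{2}s_{1}\}$. In $\QB$ there is a quantum edge $s_{1}\edge{\alpha_{1}}e$ (the length drops by $1=2\pair{\rho}{\alpha_{1}^{\vee}}-1$), and it is the shortest path from $u_{1}=s_{1}$ to $v_{1}=e$; both endpoints are already in $\WJu$ and are distinct. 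But $(s_{1},e)$ is not an edge of $\QBJ$: the Bruhat condition fails, and the quantum condition would require $\ell(e)=\ell(s_{1})+1-2\pair{\rho-\rho_{\J}}{\alpha_{1}^{\vee}}=-1$. In $\QBJ$ one instead needs two steps, $s_{1}\edge{\theta}s_{2}s_{1}\edge{\alpha_{1}}e$ (the lemma still holds, since both sides equal $\alpha_{1}^{\vee}$). So your object $\bar{\mathbf{q}}$ need not be a directed path in $\QBJ$ at all, even when $\mathbf{q}$ is shortest, and the subsequent argument (``$\bar{\mathbf{q}}$ is shortest, apply Proposition~\ref{prop:81}'') has nothing to apply to. You flagged the type analysis of projected edges as the delicate point, but the difficulty is not delicacy: the statement you plan to prove there is simply wrong, and no case analysis will rescue edgewise projection.

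The salvageable half of your plan is the lifting direction: edges of $\QBJ$ do lift to $\QB$ (this is the content of the ``lifting the parabolic quantum Bruhat graph'' results of \cite{LNSSS}), and together with connecting paths inside cosets $w\WJ$ (whose labels lie in $\DeJ^{+}$, hence die under $[\,\cdot\,]^{\J}$) plus Proposition~\ref{prop:81} with $\J=\emptyset$, this yields the one inequality $[\wt(u_{1}\Rightarrow v_{1})]^{\J}\le\wt^{\J}(u\Rightarrow v)$. The opposite inequality is exactly where the real work lies, and it is done in \cite{LNSSS}/\cite{LNSSS2} not by projecting an arbitrary shortest path but by the lifting machinery (Deodhar-type lifts of cosets and induction using the diamond-type Lemmas~\ref{lem:dia-qb} and \ref{lem:dia-b}), which controls how a shortest full path can be replaced, coset by coset, by a parabolic one without increasing $[\,\cdot\,]^{\J}$ of the weight. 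As written, your proposal has a genuine gap at this point.
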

For $u,\,v \in W$, we denote by $\ell(u \Rightarrow v)$ 
the length of a shortest directed path from $u$ to $v$ in $\QB$. 
%
%
\begin{lem}[{\cite[Lemma~7.7]{LNSSS}}] \label{lem:dia-qb}
Let $u,v \in W$, and $i \in I$. 
\begin{enu}
\item If $u^{-1}\alpha_{i} \in \Delta^{+}$ and $v^{-1}\alpha_{i} \in \Delta^{-}$, then 
$\ell(u \Rightarrow v) = \ell(s_{i}u \Rightarrow v) + 1 = \ell(u \Rightarrow s_{i}v) + 1$, 
and $\wt(u \Rightarrow v)=\wt(u \Rightarrow s_{i}v) = \wt(s_{i}u \Rightarrow v)$. 

\item If $u^{-1}\alpha_{i},\,v^{-1}\alpha_{i} \in \Delta^{+}$, 
or if $u^{-1}\alpha_{i},\,v^{-1}\alpha_{i} \in \Delta^{-}$, then 
$\ell(u \Rightarrow v) = \ell(s_{i}u \Rightarrow s_{i}v)$, and 
$\wt(u \Rightarrow v)=\wt(s_{i}u \Rightarrow s_{i}v)$. 
\end{enu}
\end{lem}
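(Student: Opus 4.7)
The plan is to reduce both parts to a local \emph{lifting} (or \emph{diamond}) property for a single edge of $\QB$ under left multiplication by the simple reflection $s_i$, and then apply that property iteratively along a shortest path. The lifting property I would establish first reads as follows: for an edge $u \edge{\beta} us_\beta = v$ in $\QB$ (Bruhat or quantum) and $i \in I$, (a) if $u^{-1}\alpha_i$ and $v^{-1}\alpha_i$ have the same sign, then there is an edge $s_i u \edge{s_i\beta} s_i v$ of the same type (in particular with the same weight contribution) using $s_i\beta \in \Delta^+$; while (b) if the signs of $u^{-1}\alpha_i$ and $v^{-1}\alpha_i$ differ, then either $\beta = \alpha_i$ (a single Bruhat step contributing weight $0$) or else one can still pair $u \to v$ with an edge $s_i u \to v$ (resp.\ $u \to s_i v$) of the same type via the reflection $s_i$. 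This is proved by a direct case analysis using the explicit length-change formulas for Bruhat and quantum edges together with $\ell(s_i u) = \ell(u) \pm 1$ according to the sign of $u^{-1}\alpha_i$.

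With this in hand, I would attack part (2) first. Fix a shortest path $\mathbf{p}: u = u_0 \edge{\beta_1} u_1 \edge{\beta_2} \cdots \edge{\beta_r} u_r = v$ and consider the sign sequence $\sigma_k := \mathrm{sign}(u_k^{-1}\alpha_i)$. Since $\sigma_0 = \sigma_r$ by assumption, the indices where the sign flips come in pairs. On the constant-sign segments, the lifting (a) produces a corresponding segment $s_i u_k \edge{s_i\beta_{k+1}} s_i u_{k+1}$ of the same type; at a sign-flipping edge, case (b) allows one to bridge the two lifted segments with an edge of the same type between $s_i u_{k-1}$ and $s_i u_k$. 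Concatenating yields a path from $s_i u$ to $s_i v$ of length $r$ and the same total weight; hence $\ell(s_i u \Rightarrow s_i v)\le \ell(u\Rightarrow v)$ and $\wt(u\Rightarrow v)-\wt(s_i u\Rightarrow s_i v)\in Q^{\vee,+}$ by Proposition~\ref{prop:81}. Applying the same construction in reverse (lifting by $s_i$ a shortest path from $s_i u$ to $s_i v$) gives the opposite inequality and yields equality of both length and weight.

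For part (1), the sign sequence $\sigma_k$ must flip an odd number of times, so at least once. I would isolate the \emph{first} sign-flipping edge $u_{k-1}\edge{\beta_k} u_k$ and apply case (b) of the lifting: the edge in question has weight $0$ (it is forced to be a Bruhat edge between $u_{k-1}$ and $s_i u_{k-1} = u_k$, or handled by replacing the initial vertex $u$ by $s_i u$). Deleting this edge and applying (a) to the remaining constant-sign portion lifts the tail of $\mathbf{p}$ to a path from $s_i u$ to $v$ of length $r-1$ and the same weight. This gives $\ell(s_iu\Rightarrow v)\le \ell(u\Rightarrow v)-1$ and $\wt(s_iu\Rightarrow v) = \wt(u\Rightarrow v)$ (the latter again by Proposition~\ref{prop:81}); the reverse inequality $\ell(u\Rightarrow v)\le \ell(s_iu\Rightarrow v)+1$ is immediate by prepending the Bruhat edge $u \edge{\alpha_i} s_iu$. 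A symmetric argument on the \emph{last} sign-flipping edge gives the analogous equalities relating $(u,v)$ to $(u,s_iv)$.

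The main obstacle is a clean proof of the lifting property itself, especially case (b): verifying that $s_i \beta$ (or $\alpha_i$) is indeed a label of an edge of the required type demands a careful check of the two defining length conditions of Bruhat versus quantum edges, and of how $\pair{\rho}{(s_i\beta)^\vee}$ compares with $\pair{\rho}{\beta^\vee}$. Once that local statement is established cleanly (a finite, essentially rank-two, computation on pairs $(u, us_\beta)$ with $s_i$ acting from the left), the rest of the argument is purely combinatorial bookkeeping along the shortest path.
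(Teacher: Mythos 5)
This lemma is not proved in the paper at all: it is quoted verbatim from {\cite[Lemma~7.7]{LNSSS}}, so what follows assesses your argument on its own terms. Your part (1) is essentially sound, with two label slips worth fixing: edges of $\QB$ are right multiplications, so the lift of an edge $u'\edge{\beta}u's_{\beta}$ is $s_{i}u'\edge{\beta}s_{i}u's_{\beta}$ with the \emph{same} label $\beta$ (not $s_{i}\beta$), and the edge from $u$ to $s_{i}u$ has label $u^{-1}\alpha_{i}$ (not $\alpha_{i}$). With that correction, case (a) of your lifting is a two-line length check; moreover at a sign-changing edge one necessarily has $u's_{\beta}=s_{i}u'$ (because $s_{\beta}$ changes the sign of $u'^{-1}\alpha_{i}$ only when $u'^{-1}\alpha_{i}=\pm\beta$), and a $+\to-$ change forces a Bruhat edge of weight $0$. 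Lifting the constant-sign initial segment, deleting that weight-zero edge and keeping the tail produces the desired path from $s_{i}u$ to $v$ of length $\ell(u\Rightarrow v)-1$ and equal weight; prepending $u\edge{u^{-1}\alpha_{i}}s_{i}u$ gives the reverse inequality, and Proposition~\ref{prop:81} converts the construction into the weight equality. The symmetric argument at the last sign change handles $(u,s_{i}v)$.

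The genuine gap is in part (2), and in clause (b) of your lifting property as stated. At a sign-changing edge $u_{k-1}\edge{\beta_{k}}u_{k}$ one has $u_{k}=s_{i}u_{k-1}$, hence $s_{i}u_{k-1}=u_{k}$ and $s_{i}u_{k}=u_{k-1}$: the ``edge of the same type between $s_{i}u_{k-1}$ and $s_{i}u_{k}$'' with which you propose to bridge the lifted segments is precisely the \emph{reversal} of the original edge. Such a reversed edge need not exist in $\QB$ at all (for a $+\to-$ change it exists only when $u_{k-1}^{-1}\alpha_{i}$ is a simple root), and when it does exist it has the opposite type and a different weight (a Bruhat edge reverses to a quantum edge and vice versa), so the lifted path has neither the claimed length nor the claimed weight. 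Likewise the alternative in your (b), ``pair $u\to v$ with an edge $s_{i}u\to v$ (resp.\ $u\to s_{i}v$) of the same type,'' is vacuous at a sign change, since there these would be loops. So the edge-by-edge lifting does not prove (2) as written. The cleanest repair is to note that (2) is a formal consequence of (1): if $u^{-1}\alpha_{i},v^{-1}\alpha_{i}\in\Delta^{+}$, apply (1) to the pair $(u,s_{i}v)$ to get $\ell(s_{i}u\Rightarrow s_{i}v)+1=\ell(u\Rightarrow s_{i}v)=\ell(u\Rightarrow v)+1$ together with $\wt(s_{i}u\Rightarrow s_{i}v)=\wt(u\Rightarrow s_{i}v)=\wt(u\Rightarrow v)$; if both roots are negative, apply (1) to $(s_{i}u,v)$. (Alternatively, one can first show that a shortest path between endpoints of equal sign contains no sign-changing edge, e.g.\ because skipping the flip edges as in your part (1) strictly shortens the lifted path and a symmetry argument then forbids flips, and only afterwards lift edge-by-edge using your case (a).)
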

%
%
\begin{dfn}[\cite{BFP}] \label{dfn:tilted}
For each $w \in W$, we define the $w$-tilted Bruhat order $\le_{w}$ on $W$ as follows:
for $u,v \in W$, 
%
%
\begin{equation} \label{eq:tilted}
u \le_{w} v \iff \ell(w \Rightarrow v) = \ell(w \Rightarrow u) + \ell(u \Rightarrow v).
\end{equation}
Namely, $u \le_{w} v$ if and only if the concatenation of a shortest directed path 
from $w$ to $u$ and one from $u$ to $v$ is one from $w$ to $v$. 
\end{dfn}
%
%
\subsection{Some lemmas on the semi-infinite Bruhat order.}
\label{subsec:lemma}

In this subsection, we fix a subset $S$ of $I$. 
%
%
\begin{lem}[{\cite[Lemma~6.1.1]{INS}}] \label{lem:INS}
If $x,\,y \in W_{\af}$ satisfy $x \sige y$ in $W_{\af}$, 
then $\PJ(x) \sige \PJ(y)$ in $(\WJu)_{\af}$.
\end{lem}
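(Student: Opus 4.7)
The plan is to reduce to the case of a single covering relation and then analyze how semi-infinite reflections interact with the factorization $W_{\af} = (\WJu)_{\af} \cdot (\WJ)_{\af}$. By Definition~\ref{dfn:SiB}, the relation $x \sige y$ witnesses a directed path $y = z_{0} \edge{\beta_{1}} z_{1} \edge{\beta_{2}} \cdots \edge{\beta_{r}} z_{r} = x$ in $\SB$. Induction on $r$ reduces the statement to the case $r = 1$, i.e.\ $x = s_{\beta} y$ with $\beta \in \prr$ and $\sell(x) = \sell(y) + 1$.

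Next, I would write $y = y_{1} y_{2}$ with $y_{1} = \PJ(y) \in (\WJu)_{\af}$ and $y_{2} \in (\WJ)_{\af}$. Since $\PJ(w z) = \PJ(w)$ for all $w \in W_{\af}$ and $z \in (\WJ)_{\af}$ (a direct consequence of the uniqueness of the factorization underlying the definition \eqref{eq:PiJ}), we have $\PJ(x) = \PJ(s_{\beta} y_{1})$. The argument then splits according to whether $\gamma := y_{1}^{-1} \beta$ lies in $(\DeJ)_{\af}$ or not. If $\gamma \in (\DeJ)_{\af}$, then $s_{\gamma} \in (\WJ)_{\af}$ by \eqref{eq:stabilizer}, so $s_{\beta} y_{1} = y_{1} s_{\gamma} \in y_{1} (\WJ)_{\af}$; hence $\PJ(x) = y_{1} = \PJ(y)$, and there is nothing to prove. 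Otherwise $\gamma \in \prr \setminus (\DeJ)_{\af}$ (up to sign), and one must actually produce a directed path in $\SBJ$ from $y_{1}$ to $\PJ(s_{\beta} y_{1})$.

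For this nontrivial case my plan is to exploit the parabolic lifting property for the semi-infinite Bruhat order: decompose $s_{\gamma}$ as a product of reflections at simple affine roots so that $s_{\beta} y_{1}$ is reached from $y_{1}$ by a sequence of single-edge operations in $\SB$; each such operation either preserves the $\PJ$-projection (when the intermediate reflection lies in $(\WJ)_{\af}$) or contributes a genuine covering edge in $\SBJ$, and the concatenation of the latter edges yields the required directed path from $\PJ(y)$ to $\PJ(x)$. The main obstacle is establishing this lifting/Z-property in the generic/semi-infinite setting; in the alcove-walk picture of \cite{Lu80}, \cite{Lu97} this boils down to the fact that passage from the weak order on $W_{\af}$ to the weak order on $\J$-alcoves is order-preserving, after which the induction on $r$ closes the argument.
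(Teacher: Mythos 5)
Your reduction to a single edge, the identity $\PJ(x)=\PJ(s_{\beta}y_{1})$ with $y_{1}=\PJ(y)$, and the easy case $\gamma=y_{1}^{-1}\beta\in(\DeJ)_{\af}$ (where $\PJ(x)=\PJ(y)$) are all fine. But the remaining case is the entire content of the lemma, and your sketch does not prove it. Writing $s_{\gamma}$ (or $s_{\beta}$) as a product of simple affine reflections does not present $s_{\beta}y_{1}$ as the endpoint of a chain of edges of $\SB$ starting at $y_{1}$: the intermediate elements $s_{i_{k}}\cdots s_{i_{1}}y_{1}$ need not be $\sige$-comparable to $y_{1}$, and $\sell$ may jump or drop by more than $1$ at each step, so the asserted dichotomy (each step either preserves the projection or contributes a covering edge of $\SBJ$) is precisely the unproved ``lifting/Z-property'' you acknowledge; note also that whether a step preserves $\PJ$ is governed by whether $z^{-1}\alpha_{i}\in(\DeJ)_{\af}$ for the current element $z$ (cf.\ Lemma~\ref{lem:236}), not by whether the reflection itself lies in $(\WJ)_{\af}$, so the dichotomy cannot be read off from a fixed factorization of $s_{\gamma}$. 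Moreover, in the hard case you never use the hypothesis $\sell(s_{\beta}y)=\sell(y)+1$; without it there is no reason why $s_{\beta}y_{1}$ should lie in $(\WJu)_{\af}$ (a non-simple reflection $s_{\beta}$ inverts many positive real roots, so $s_{\beta}y_{1}\beta'$ may be negative for some $\beta'\in(\DeJ)_{\af}^{+}$), let alone why $\PJ(s_{\beta}y_{1})\sige y_{1}$. Finally, invoking the fact that passage to Lusztig's order on $\J$-alcoves is order-preserving is circular: that is essentially a restatement of the lemma itself, which the paper does not prove but quotes from \cite[Lemma~6.1.1]{INS}.

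If you want an argument within the toolkit already quoted in this paper, the numerical criterion does the job in two lines: write $x=ut_{\xi}$, $y=vt_{\zeta}$ with $u,v\in W$, $\xi,\zeta\in Q^{\vee}$. By Lemma~\ref{lem:NNS} with $\J=\emptyset$, $x\sige y$ means $\xi-\wt(v\Rightarrow u)-\zeta\in Q^{\vee,+}$; applying $[\,\cdot\,]^{\J}$ and using Lemma~\ref{lem:wtS} gives $[\xi]^{\J}\ge\wt^{\J}(\mcr{v}\Rightarrow\mcr{u})+[\zeta]^{\J}$, which by Lemma~\ref{lem:NNS} again and Lemma~\ref{lem:PiJ}\,(1) says exactly $\PJ(x)=\mcr{u}\PJ(t_{\xi})\sige\mcr{v}\PJ(t_{\zeta})=\PJ(y)$. (Be aware, though, that those quoted lemmas encode nontrivial facts about the semi-infinite order from \cite{NNS1}; the genuinely hard step your sketch leaves open is hidden there, so a fully self-contained proof would still have to supply it, as is done in \cite{INS}.)
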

%
%
\begin{lem}[{\cite[Lemmas 4.3.5, 4.3.6, and 4.3.7]{NNS1}}] \label{lem:NNS}
Let $u,v \in \WJu$, and $\xi,\zeta \in Q^{\vee}$. 
Then, 
\begin{equation}
u\PJ(t_{\xi}) \sige v\PJ(t_{\zeta}) \iff 
[\xi]^{\J} \ge \wt^{\J}(v \Rightarrow u) + [\zeta]^{\J}. 
\end{equation}
\end{lem}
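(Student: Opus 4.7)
The plan is to split the proof into two stages: first establish the claim in the non-parabolic case $\J = \emptyset$, then bootstrap to arbitrary $\J$. In the non-parabolic case, the assertion reads: for $u, v \in W$ and $\xi, \zeta \in Q^{\vee}$, one has $ut_{\xi} \sige vt_{\zeta}$ in $W_{\af}$ iff $\xi - \zeta \ge \wt(v \Rightarrow u)$. To prove this I would analyze cover relations in $\SB$. A cover $x \edge{\gamma} y$ with $\gamma = \alpha + n\delta \in \prr$ ($\alpha \in \Delta$, $n \in \BZ_{\ge 0}$) sends $x = wt_{\eta}$ to $y = s_{\alpha}w\,t_{\eta + n\alpha^{\vee}}$; the translation part changes by the coroot $n\alpha^{\vee}$, and the condition $\sell(y) = \sell(x) + 1$ together with the sign of $n$ distinguishes Bruhat edges ($n = 0$, inducing a Bruhat edge $s_{\alpha}w \to w$ in $\QB$) from quantum edges ($n \ge 1$, inducing a quantum edge). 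Summing along a directed path from $vt_{\zeta}$ to $ut_{\xi}$ in $\SB$ and applying Proposition~\ref{prop:81} (arbitrary vs.\ shortest paths in $\QB$) yields the ``only if'' direction. The ``if'' direction is constructive: realize a shortest $\QB$-path from $v$ to $u$ as a sequence of $\SB$-covers, then append $\SB$-covers that translate by any element of $Q^{\vee,+}$.

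For the parabolic version, I would use Lemma~\ref{lem:PiJ}(2) to write $\PJ(t_{\xi}) = u' t_{\xi + \xi_{1}}$ and $\PJ(t_{\zeta}) = v' t_{\zeta + \zeta_{1}}$ with $u', v' \in \WJ$ and $\xi_{1}, \zeta_{1} \in \QJv$; the target elements become $u \PJ(t_{\xi}) = uu' t_{\xi + \xi_{1}}$ and $v \PJ(t_{\zeta}) = vv' t_{\zeta + \zeta_{1}}$, already in $(\WJu)_{\af}$ as Peterson representatives. For the ``only if'' direction, I would observe that by Definition~\ref{dfn:SiB} every edge of $\SBJ$ is also an edge of $\SB$, so any $\SBJ$-path from $v\PJ(t_{\zeta})$ to $u\PJ(t_{\xi})$ is a fortiori an $\SB$-path. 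The non-parabolic ``only if'' then gives $(\xi + \xi_{1}) - (\zeta + \zeta_{1}) \ge \wt(vv' \Rightarrow uu')$ in $Q^{\vee}$. Projecting by $[\cdot]^{\J}$ kills the contributions of $\xi_{1}, \zeta_{1} \in \QJv$, and Lemma~\ref{lem:wtS} identifies $[\wt(vv' \Rightarrow uu')]^{\J}$ with $\wt^{\J}(v \Rightarrow u)$, yielding $[\xi - \zeta]^{\J} \ge \wt^{\J}(v \Rightarrow u)$.

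For the ``if'' direction in the parabolic case, I would construct a directed path from $v\PJ(t_{\zeta})$ to $u\PJ(t_{\xi})$ intrinsically inside $\SBJ$ by mirroring the non-parabolic construction: lift a shortest $\QBJ$-path from $v$ to $u$ to a $\SBJ$-path through Peterson representatives, then append further $\SBJ$-covers to absorb the residual element of $Q_{\Jc}^{\vee,+}$ given by $[\xi - \zeta]^{\J} - \wt^{\J}(v \Rightarrow u)$. Each step maintains membership in $(\WJu)_{\af}$ because Peterson's representatives are preserved under the natural combinatorial operations (projection commutes with translations via Lemma~\ref{lem:PiJ}(1)), and the Bruhat/quantum dichotomy above determines how the translation part shifts under each cover.

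The main obstacle is controlling the $\QJv$-component when passing between the parabolic and non-parabolic inequalities: the hypothesis $[\xi - \zeta]^{\J} \ge \wt^{\J}(v \Rightarrow u)$ constrains only the $\Jc$-projection, whereas the non-parabolic criterion demands the full inequality in $Q^{\vee}$. Rather than inductively building the $\SBJ$-path (which requires delicate tracking of Peterson representatives through each cover), an alternative is to show that the adjustments $\xi_{1}, \zeta_{1} \in \QJv$ coming from Lemma~\ref{lem:PiJ}(2) can be chosen (using the freedom in Lemma~\ref{lem:PiJ}(3)) so that the full inequality holds in $Q^{\vee}$; then the non-parabolic ``if'' yields an $\SB$-path, and Lemma~\ref{lem:INS} projects it to the desired $\SBJ$-relation. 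Either approach ultimately hinges on a careful combinatorial analysis of how $\WJ$ acts on translations within the Peterson decomposition, which is precisely the content encapsulated in the three cited lemmas of \cite{NNS1}.
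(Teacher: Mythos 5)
The paper does not prove this lemma at all: it is imported verbatim from \cite[Lemmas~4.3.5, 4.3.6, 4.3.7]{NNS1}, so there is no internal argument to measure yours against, and I can only judge your sketch on its own terms. Its overall architecture is sound, and your second alternative for the parabolic ``if'' direction --- adjust the $\QJv$-components of the translations so that the full inequality holds in $Q^{\vee}$, apply the $\J=\emptyset$ case to get a relation in $\SB$, then push it down with Lemma~\ref{lem:INS} and Lemma~\ref{lem:PiJ}\,(1),(3) --- is exactly the manoeuvre this paper performs in the proof of Proposition~\ref{prop:kappa2} (the inclusion $\supset$ of \eqref{eq:k2}), and it is the route to prefer; the ``intrinsic lifting inside $\SBJ$'' you float first would force you to track Peterson representatives through every cover for no gain. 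Your parabolic ``only if'' step is fine as stated, since every edge of $\SBJ$ is by definition an edge of $\SB$, and Lemma~\ref{lem:wtS} converts $[\wt(vv'\Rightarrow uu')]^{\J}$ into $\wt^{\J}(v\Rightarrow u)$.

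In the $\J=\emptyset$ stage two details need repair, though neither is fatal. First, for $x=wt_{\eta}$ and $\beta=\alpha+n\delta$ one has $s_{\beta}x=s_{\alpha}w\,t_{\eta+nw^{-1}\alpha^{\vee}}$, not $s_{\alpha}w\,t_{\eta+n\alpha^{\vee}}$; the translation part moves by $n w^{-1}\alpha^{\vee}$, i.e.\ by $\pm n\gamma^{\vee}$ with $\gamma=\pm w^{-1}\alpha\in\Delta^{+}$ the label of the induced edge of $\QB$. Second, the condition $\sell(s_{\beta}x)=\sell(x)+1$ actually forces $w^{-1}\alpha\in\Delta^{+}$ and $n\in\{0,1\}$: $n=0$ gives a Bruhat edge $w\rightarrow ws_{\gamma}$ with unchanged translation, $n=1$ gives a quantum edge with translation increased by exactly $\gamma^{\vee}$, and $n\ge 2$ (or $w^{-1}\alpha\in\Delta^{-}$) is impossible because $|\ell(ws_{\gamma})-\ell(w)|\le 2\pair{\rho}{\gamma^{\vee}}-1$. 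With this dichotomy the ``only if'' follows from Proposition~\ref{prop:81} as you say, since the total translation increment along a path equals the weight of the projected $\QB$-path. Finally, make the appending step explicit: for a simple coroot $\alpha_{i}^{\vee}$ the two edges $u\edge{\alpha_{i}}us_{i}\edge{\alpha_{i}}u$ of $\QB$ (one Bruhat, one quantum) lift to two covers carrying $ut_{\eta}$ to $ut_{\eta+\alpha_{i}^{\vee}}$, which is what allows you to absorb an arbitrary element of $Q^{\vee,+}$ after lifting a shortest path from $v$ to $u$.
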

%
%
\begin{lem}[{\cite[Lemma~2.3.6]{INS}}] \label{lem:236}
Let $x \in (\WJu)_{\af}$, and $i \in I_{\af}$. 
Then, $x^{-1}\alpha_{i} \not\in (\DeJ)_{\af}$ 
if and only if $s_{i}x \in (\WJu)_{\af}$. 
In particular, for $u \in \WJu$ and $i \in I$, 
$u^{-1}\alpha_{i} \not\in \DeJ$ 
if and only if $s_{i}u \in \WJu$
{\rm(}see also {\rm\cite[Proposition~5.10]{LNSSS})}. 
\end{lem}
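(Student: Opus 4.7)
The plan is to unpack the definition \eqref{eq:Pet} of $(\WJu)_{\af}$ and reduce the statement to the standard fact that each simple reflection $s_i$ (for $i \in I_{\af}$) permutes $\prr \setminus \{\alpha_i\}$ and sends $\alpha_i$ to $-\alpha_i$. So by \eqref{eq:Pet}, membership $s_{i} x \in (\WJu)_{\af}$ amounts to the requirement that
\[
s_{i}(x\beta) \in \prr \quad \text{for all } \beta \in (\DeJ)_{\af}^{+}.
\]
Since $x \in (\WJu)_{\af}$ already guarantees $x\beta \in \prr$ for every such $\beta$, the simple-reflection property shows $s_i(x\beta) \in \prr$ if and only if $x\beta \ne \alpha_i$. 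Hence the first key reduction is
\[
s_{i}x \in (\WJu)_{\af} \iff \alpha_{i} \notin x\bigl((\DeJ)_{\af}^{+}\bigr) \iff x^{-1}\alpha_{i} \notin (\DeJ)_{\af}^{+}.
\]

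The second step is to upgrade $(\DeJ)_{\af}^{+}$ to $(\DeJ)_{\af}$ on the right-hand side, which is where the hypothesis $x \in (\WJu)_{\af}$ is used once more. Namely, applying $x$ to $(\DeJ)_{\af}^{-} = -(\DeJ)_{\af}^{+}$ we get $x\bigl((\DeJ)_{\af}^{-}\bigr) \subset -\prr$, so $\alpha_{i}$ (which lies in $\prr$) cannot be of the form $x\gamma$ with $\gamma \in (\DeJ)_{\af}^{-}$. Therefore $x^{-1}\alpha_{i} \in (\DeJ)_{\af}$ forces $x^{-1}\alpha_{i} \in (\DeJ)_{\af}^{+}$, and combining this with the first reduction yields
\[
s_{i}x \in (\WJu)_{\af} \iff x^{-1}\alpha_{i} \notin (\DeJ)_{\af},
\]
which is the first assertion.

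For the ``in particular'' statement, I would repeat the same argument in the finite setting, using the description \eqref{eq:mcr} of $\WJu$ in place of \eqref{eq:Pet}: the condition $s_{i}u \in \WJu$ translates, via the same simple-reflection property restricted to $\Delta^{+}$, to $u^{-1}\alpha_{i} \notin \DeJ^{+}$, and the fact that $u\bigl(\DeJ^{-}\bigr) \subset \Delta^{-}$ promotes this to $u^{-1}\alpha_{i} \notin \DeJ$. I do not anticipate any genuine obstacle here; the only point requiring a little care is to verify that the simple-reflection property $s_i(\prr \setminus \{\alpha_i\}) = \prr \setminus \{\alpha_i\}$ is being invoked in the affine root system (where it is standard, e.g.\ from \cite{Kac}), so that the reasoning applies uniformly for $i = 0$ as well as for $i \in I$.
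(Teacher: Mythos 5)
Your proof is correct. Note that the paper gives no argument for this lemma at all: it is simply quoted from \cite[Lemma~2.3.6]{INS}, so there is nothing internal to compare against; your verification from the definition \eqref{eq:Pet}, using the standard fact that $s_{i}$ permutes $\prr \setminus \{\alpha_{i}\}$ (valid for $i=0$ as well, by \cite{Kac}), together with the observation that $x\bigl((\DeJ)_{\af}^{-}\bigr) \subset -\prr$ upgrades $(\DeJ)_{\af}^{+}$ to $(\DeJ)_{\af}$, is exactly the expected argument. For the ``in particular'' part, besides redoing the finite case via \eqref{eq:mcr} as you propose, one could also just specialize the affine statement, since $\WJu \subset (\WJu)_{\af}$ and, for $u \in W$ and $i \in I$, the root $u^{-1}\alpha_{i}$ lies in $\Delta$, so it belongs to $(\DeJ)_{\af}$ if and only if it belongs to $\DeJ$.
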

%
%
\begin{rem} \label{rem:236}
Keep the notation in Lemma~\ref{lem:236}. 
Let $\lambda \in P^{+} \subset P_{\af}^{0}$ 
be such that $\bigl\{i \in I \mid 
\pair{\lambda}{\alpha_{i}^{\vee}}=0 \bigr\} = S$. 
Then, $x^{-1}\alpha_{i} \not\in (\DeJ)_{\af}$ 
(resp., $u^{-1}\alpha_{i} \not\in \DeJ$) if and only if 
$\pair{x\lambda}{\alpha_{i}^{\vee}} \ne 0$ 
(resp., $\pair{u\lambda}{\alpha_{i}^{\vee}} \ne 0$). 
\end{rem}
%
%
\begin{lem}[{\cite[Lemma~2.3.6]{NS16}}] \label{lem:dia-si}
Let $x,y \in (\WJu)_{\af}$ be such that $x \sile y$, and let $i \in I_{\af}$. 
\begin{enu}
\item If $x^{-1}\alpha_{i} \in (\Delta^{+} \setminus \DeJ^{+})+\BZ\delta$ and 
$y^{-1}\alpha_{i} \in (\Delta^{-} \cup \DeJ)+\BZ\delta$
{\rm(}or equivalently, $\pair{x\lambda}{\alpha_{i}^{\vee}} > 0$ and 
$\pair{y\lambda}{\alpha_{i}^{\vee}} \le 0$, where 
$\lambda$ is as in Remark~\ref{rem:236}{\rm)}, 
then $s_{i}x \in (\WJu)_{\af}$ and $s_{i}x \sile y$. 

\item If $x^{-1}\alpha_{i} \in (\Delta^{+} \cup \DeJ)+\BZ\delta$ and 
$y^{-1}\alpha_{i} \in (\Delta^{-} \setminus \DeJ^{-})+\BZ\delta$
{\rm(}or equivalently, $\pair{x\lambda}{\alpha_{i}^{\vee}} \ge 0$ and 
$\pair{y\lambda}{\alpha_{i}^{\vee}} < 0${\rm)}, 
then $s_{i}y \in (\WJu)_{\af}$ and $x \sile s_{i}y$. 

\item If $x^{-1}\alpha_{i},\,y^{-1}\alpha_{i} \in (\Delta^{+} \setminus \DeJ^{+})+\BZ\delta$ 
{\rm(}or equivalently, $\pair{x\lambda}{\alpha_{i}^{\vee}} > 0$ and 
$\pair{y\lambda}{\alpha_{i}^{\vee}} > 0${\rm)}, or 
if $x^{-1}\alpha_{i},\,y^{-1}\alpha_{i} \in (\Delta^{-} \setminus \DeJ^{-})+\BZ\delta$
{\rm(}or equivalently, $\pair{x\lambda}{\alpha_{i}^{\vee}} < 0$ and 
$\pair{y\lambda}{\alpha_{i}^{\vee}} < 0${\rm)}, then $s_{i}x \sile s_{i}y$. 
\end{enu}
\end{lem}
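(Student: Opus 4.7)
The plan is to prove all three parts simultaneously by induction on $\sell(y) - \sell(x) \in \BZ_{\ge 0}$, working from the definition of $\sile$ via directed paths in the parabolic semi-infinite Bruhat graph $\SBJ$ (Definition~\ref{dfn:SiB}). Two inputs are central: (a) Lemma~\ref{lem:236}, which says $s_i z \in (\WJu)_{\af}$ if and only if $z^{-1}\alpha_i \notin (\DeJ)_{\af}$, delivering the ``lies in $(\WJu)_{\af}$'' parts of the conclusions; and (b) the semi-infinite analog of the classical descent rule for left multiplication by a simple reflection, namely that for $z \in (\WJu)_{\af}$ with $s_i z \in (\WJu)_{\af}$ one has $z \sil s_i z$ if $z^{-1}\alpha_i \in \prr$, and $s_i z \sil z$ otherwise. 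I would establish input (b) directly from the identity $s_i z = s_{z\alpha_i} z$ combined with a sign/length computation at the level of affine real roots.

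The base case $\sell(y) = \sell(x)$ forces $x = y$. In parts (1) and (2), the two hypotheses on $x^{-1}\alpha_i$ are then incompatible (they place it in disjoint subsets of the affine real roots), so the statement is vacuous; in part (3), the conclusion $s_i x \sile s_i x$ is trivial.

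For the inductive step, I factor a shortest directed path in $\SBJ$ from $x$ to $y$ as $x \sile z \edge{\gamma} y$, so that $y = s_\gamma z$, $\sell(y) = \sell(z) + 1$, and the inductive hypothesis is available for the pair $(x, z)$. Using the transformation law
\begin{equation*}
y^{-1}\alpha_i \;=\; z^{-1}\alpha_i - \pair{\alpha_i}{\gamma^{\vee}}\, z^{-1}\gamma,
\end{equation*}
the sign/parabolic region of $y^{-1}\alpha_i$ is determined by that of $z^{-1}\alpha_i$ together with the simple-root coefficients of $\gamma$. I would then split into cases according to which of the three regions $(\Delta^{+} \setminus \DeJ^{+}) + \BZ\delta$, $(\Delta^{-} \setminus \DeJ^{-}) + \BZ\delta$, or $(\DeJ)_{\af}$ (which partition the affine real roots) contains $z^{-1}\alpha_i$. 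In each case I apply the appropriate one of parts (1)--(3) to $(x, z)$ via the inductive hypothesis, and splice the result with the one-step analysis of the edge $z \edge{\gamma} y$ using the conjugation identity $s_i s_{\gamma} = s_{s_i \gamma} s_i$, which transports edges of $\SBJ$ between $z$ and $y$ to (potential) edges between $s_i z$ and $s_i y$.

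The main obstacle is the bookkeeping in this case analysis, especially the subcase $z^{-1}\alpha_i \in (\DeJ)_{\af}$ in which $s_i z$ does not lie in $(\WJu)_{\af}$: here $s_i z$ cannot serve as an intermediate vertex in $\SBJ$, so the chain of comparisons must be rerouted through $y$ or $s_i y$ directly. One must also match the ``$+\BZ\delta$'' slack with care, since the real roots $(\Delta^{\pm} \setminus \DeJ^{\pm}) + \BZ\delta$ cut across both $\prr$ and $-\prr$. Each individual subcase reduces to an elementary affine-root computation, but the uniform bookkeeping is what gives the lemma its content as a genuine parabolic analog of Deodhar's lifting/Z-property for the classical Bruhat order.
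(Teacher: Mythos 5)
Your overall skeleton (induct along a directed path, peel off the last edge $x \sile z \edge{\gamma} y$, and transport edges by $s_{i}s_{\gamma}=s_{s_{i}\gamma}s_{i}$) is the standard way to attack such lifting statements, but two of your load-bearing ingredients are wrong or missing. First, your input (b) uses the wrong descent criterion: for the semi-infinite length, whether left multiplication by $s_{i}$ goes up or down is governed by the sign of the \emph{classical part} of $z^{-1}\alpha_{i}$ (equivalently, by $\pair{z\lambda}{\alpha_{i}^{\vee}}>0$ or $<0$), not by whether $z^{-1}\alpha_{i}\in\prr$. Indeed, writing $z=wt_{\xi}$ and $i\in I$, one has $\sell(s_{i}z)-\sell(z)=\ell(s_{i}w)-\ell(w)$, which depends only on whether $w^{-1}\alpha_{i}\in\Delta^{+}$, whereas $z^{-1}\alpha_{i}=w^{-1}\alpha_{i}+\pair{w^{-1}\alpha_{i}}{\xi}\delta$ can fail to lie in $\prr$ even when $w^{-1}\alpha_{i}\in\Delta^{+}$ (take $z=t_{\xi}$ with $\pair{\alpha_{i}}{\xi}<0$: then $z^{-1}\alpha_{i}\notin\prr$, yet $z\sil s_{i}z$). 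Since the hypotheses of the lemma are exactly conditions on classical parts, keying the case analysis to $\prr$-membership derails it from the start. Relatedly, the identity you invoke, $s_{i}z=s_{z\alpha_{i}}z$, is false (the correct identity is $zs_{i}=s_{z\alpha_{i}}z$; for left multiplication the relevant fact is simply $s_{i}z=s_{\alpha_{i}}z$ plus the $\sell$-computation above).

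Second, even with (b) corrected, the induction as described does not close, and the one-step facts you defer are precisely the content of the lemma. In the mixed subcases the inductive hypothesis for $(x,z)$ is unavailable: none of (1)--(3) covers a pair in which one side has $(\cdot)^{-1}\alpha_{i}\in(\DeJ)_{\af}$, and your proposed ``rerouting through $y$ or $s_{i}y$'' is exactly the missing argument. For instance, in part (2) with $z^{-1}\alpha_{i}\in(\DeJ)_{\af}$ one has $\sell(s_{i}y)=\sell(y)-1=\sell(z)$ but $s_{i}y\ne z$ (otherwise $y=s_{i}z\notin(\WJu)_{\af}$ by Lemma~\ref{lem:236}), so the chain $x\sile z$ cannot be spliced to $s_{i}y$; one must either rule such configurations out or strengthen the induction (or reduce to $\J=\emptyset$ and then descend via $\PJ$, Lemma~\ref{lem:INS} and a lifting argument as in \eqref{eq:k2}). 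Likewise, the key one-step assertion that a sign flip of the classical part of $(\cdot)^{-1}\alpha_{i}$ across a single edge forces $\gamma=\alpha_{i}$ is not an ``elementary affine-root computation'': for Bruhat-type edges ($\gamma\in\Delta^{+}$) it needs the classical Deodhar lifting property, and for quantum-type edges ($\gamma=\alpha+\delta$) one needs, e.g., that $\sell(s_{\gamma}z)=\sell(z)+1$ forces the inversion set of the corresponding finite reflection to be contained in that of the finite part of $z$, which then contradicts the flip; these are diamond lemmas of the type proved in \cite{BFP} and \cite{LNSSS}, not bookkeeping. As it stands the proposal is an outline with an incorrect pivot (the descent rule) and with the crux steps asserted rather than proved; note also that the paper itself does not prove this statement but quotes it from \cite[Lemma~2.3.6]{NS16}, so a complete argument would have to supply all of the above.
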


By Remark~\ref{rem:SB}, the next well-known lemma 
(see, e.g., \cite[Lemma on page~151]{Hum90} for the case of $S=\emptyset$) 
also follows from Lemma~\ref{lem:dia-si} as a special case. 
%
%
\begin{lem} \label{lem:dia-b}
Let $u,v \in \WJu$ be such that $u \sile v$, and let $i \in I$. 
\begin{enu}
\item If $u^{-1}\alpha_{i} \in \Delta^{+} \setminus \DeJ^{+}$ and 
$v^{-1}\alpha_{i} \in \Delta^{-} \cup \DeJ$ 
{\rm(}or equivalently, $\pair{u\lambda}{\alpha_{i}^{\vee}} > 0$ and 
$\pair{v\lambda}{\alpha_{i}^{\vee}} \le 0$, where 
$\lambda$ is as in Remark~\ref{rem:236}{\rm)}, 
then $s_{i}u \in \WJu$ and $s_{i}u \le v$. 

\item If $u^{-1}\alpha_{i} \in \Delta^{+} \cup \DeJ$ and 
$v^{-1}\alpha_{i} \in \Delta^{-} \setminus \DeJ^{-}$
{\rm(}or equivalently, $\pair{u\lambda}{\alpha_{i}^{\vee}} \ge 0$ and 
$\pair{v\lambda}{\alpha_{i}^{\vee}} < 0${\rm)}, 
then $s_{i}y \in \WJu$ and $u \le s_{i}v$. 

\item If $u^{-1}\alpha_{i},\,v^{-1}\alpha_{i} \in \Delta^{+} \setminus \DeJ^{+}$
{\rm(}or equivalently, $\pair{u\lambda}{\alpha_{i}^{\vee}} > 0$ and 
$\pair{v\lambda}{\alpha_{i}^{\vee}} > 0${\rm)}, or 
if $u^{-1}\alpha_{i},\,v^{-1}\alpha_{i} \in \Delta^{-} \setminus \DeJ^{-}$
{\rm(}or equivalently, $\pair{u\lambda}{\alpha_{i}^{\vee}} < 0$ and 
$\pair{v\lambda}{\alpha_{i}^{\vee}} < 0${\rm)}, then $s_{i}u \le s_{i}v$. 
\end{enu}
\end{lem}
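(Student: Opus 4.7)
The plan is to derive Lemma~\ref{lem:dia-b} directly from Lemma~\ref{lem:dia-si} by restriction to the finite setting, as the excerpt itself suggests. The first step is to check the embedding $\WJu \subset (\WJu)_{\af}$: given $u \in \WJu$ and $\beta = \alpha + n\delta \in (\DeJ)_{\af}^{+}$, the case $n = 0$ forces $\alpha \in \DeJ^{+}$ and so $u\alpha \in \Delta^{+}$ by \eqref{eq:mcr}, while $n > 0$ gives $u\beta = u\alpha + n\delta \in \Delta + \BZ_{>0}\delta \subset \prr$ automatically. Combined with Remark~\ref{rem:SB}, which identifies the Bruhat order on $\WJu$ with the restriction of $\sile$, the hypothesis $u \le v$ of Lemma~\ref{lem:dia-b} becomes the hypothesis $u \sile v$ of Lemma~\ref{lem:dia-si} with $x := u$ and $y := v$.

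Next, I would translate the sign conditions. Since $u,\,v \in W$, both $u^{-1}\alpha_{i}$ and $v^{-1}\alpha_{i}$ lie in $\Delta$ with zero $\delta$-component, so membership in $\Delta^{\pm} \setminus \DeJ^{\pm}$ or $\Delta^{\pm} \cup \DeJ$ coincides with membership in the corresponding affine set $(\Delta^{\pm} \setminus \DeJ^{\pm}) + \BZ\delta$ or $(\Delta^{\pm} \cup \DeJ) + \BZ\delta$. Consequently, each of the three hypotheses of Lemma~\ref{lem:dia-b} matches the corresponding hypothesis in Lemma~\ref{lem:dia-si} verbatim.

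With the hypotheses aligned, I would apply Lemma~\ref{lem:dia-si}(1), (2), (3) respectively, obtaining $s_{i}u \sile v$, $u \sile s_{i}v$, or $s_{i}u \sile s_{i}v$ in $(\WJu)_{\af}$, along with the assertion that the relevant $s_{i}u$ or $s_{i}v$ already lies in $(\WJu)_{\af}$. Because $s_{i} \in W$ and $u,\,v \in W$, the elements $s_{i}u$ and $s_{i}v$ lie in $W$; together with the conclusion that they lie in $(\WJu)_{\af}$, they in fact lie in $W \cap (\WJu)_{\af} = \WJu$ (the defining conditions \eqref{eq:mcr} and \eqref{eq:Pet} agree for elements with trivial translation part). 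Applying Remark~\ref{rem:SB} once more converts each semi-infinite inequality back to the ordinary Bruhat inequality, yielding the three conclusions of Lemma~\ref{lem:dia-b}.

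There is essentially no obstacle here, since the work has been carried out in Lemma~\ref{lem:dia-si}. The only step requiring a moment of care is the identification $W \cap (\WJu)_{\af} = \WJu$, which upgrades the conclusion ``$s_{i}u \in (\WJu)_{\af}$'' (resp.\ ``$s_{i}v \in (\WJu)_{\af}$'') to the desired statement ``$s_{i}u \in \WJu$'' (resp.\ ``$s_{i}v \in \WJu$''); this follows immediately from the two characterizations cited above.
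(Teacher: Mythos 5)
Your proposal is correct and follows exactly the route the paper itself indicates: specializing Lemma~\ref{lem:dia-si} to elements of $\WJu \subset (\WJu)_{\af}$ and translating back and forth via Remark~\ref{rem:SB}, with the identification $W \cap (\WJu)_{\af} = \WJu$ (which also follows from \eqref{eq:mcr} and \eqref{eq:Pet}, or from Lemma~\ref{lem:236}) handling the membership claims. The paper offers no further argument beyond this reduction, so your write-out matches its proof in substance.
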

%
%
\subsection{Definition and a recursive description of subsets $\EQB(w)$.}
\label{subsec:EQB}

For $w \in W$, we define the right descent set  $I_{w}$ for $w$ by
%
%
\begin{equation} \label{eq:Iw}
I_{w}:=\bigl\{ j \in I \mid ws_{j} < w \bigr\}=
\bigl\{ j \in I \mid w\alpha_{j} \in \Delta^{-} \bigr\}. 
\end{equation}
%
%
\begin{rem}[see Remark~\ref{rem:Iw}] \label{rem:Iw1}
Let $\J$ be a subset of $I$. 
Then we have $\J \subset \Iw$ for $w \in \WJu$. 
\end{rem}

%
\begin{lem}[{\cite[Lemma~3.1.1]{NNS2}}] \label{lem:311}
Let $w \in W$ and $i \in I$ be such that $s_{i}w < w$. 
\begin{enu}
\item[\rm (a)]
$s_{i}w \notin wW_{I_{w}}$ if and only if $-w^{-1}\alpha_{i}$ is not a simple root. 
In this case, $I_{s_{i}w}=I_{w}$. 

\item[\rm (b)]
$s_{i}w \in wW_{I_{w}}$ if and only if $-w^{-1}\alpha_{i}$ is a simple root. 
In this case, $I_{s_{i}w}=I_{w} \setminus \{k\}$, where 
$\alpha_{k}=- w^{-1}\alpha_{i}$. 
\end{enu}
\end{lem}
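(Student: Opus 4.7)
The plan is to rephrase everything in terms of the root $\beta := -w^{-1}\alpha_{i}$. Since $s_{i}w < w$, the standard left-multiplication criterion gives $w^{-1}\alpha_{i} \in \Delta^{-}$, so $\beta \in \Delta^{+}$; the identity $ws_{\gamma}w^{-1} = s_{w\gamma}$ then yields $s_{i}w = ws_{\beta}$. Consequently $s_{i}w \in wW_{I_{w}}$ if and only if $s_{\beta} \in W_{I_{w}}$, and because $W_{I_{w}}$ is a standard parabolic subgroup, this is equivalent to $\beta$ lying in the positive part $\Delta_{I_{w}}^{+}$ of the root subsystem $\Delta_{I_{w}} := \Delta \cap \bigoplus_{j \in I_{w}} \BZ\alpha_{j}$.

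The key step will be to show that $\beta \in \Delta_{I_{w}}^{+}$ if and only if $\beta$ is a simple root of $\Delta$. The easy direction is immediate: if $\beta = \alpha_{k}$ for some $k \in I$, then $w\alpha_{k} = -\alpha_{i} \in \Delta^{-}$ forces $k \in I_{w}$, hence $\beta \in \Delta_{I_{w}}^{+}$. For the converse, expand $\beta = \sum_{j \in I_{w}} c_{j}\alpha_{j}$ with $c_{j} \in \BZ_{\ge 0}$; applying $w$ and using $-w\alpha_{j} \in \Delta^{+}$ for every $j \in I_{w}$, the identity
\begin{equation*}
\alpha_{i} = \sum_{j \in I_{w}} c_{j}(-w\alpha_{j})
\end{equation*}
expresses the simple root $\alpha_{i}$ as a nonnegative integer combination of positive roots. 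A height argument forces exactly one index $j^{*}$ with $c_{j^{*}} = 1$ and $-w\alpha_{j^{*}} = \alpha_{i}$, which gives $\beta = \alpha_{j^{*}}$ and simultaneously identifies the index $k$ of part (b).

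To compute $I_{s_{i}w}$, I will use that $s_{i}$ permutes $\Delta^{+} \setminus \{\alpha_{i}\}$ and swaps $\alpha_{i}$ with $-\alpha_{i}$. For $j \in I$, the sign of $(s_{i}w)\alpha_{j} = s_{i}(w\alpha_{j})$ agrees with that of $w\alpha_{j}$ unless $w\alpha_{j} = \pm\alpha_{i}$. The possibility $w\alpha_{j} = \alpha_{i}$ is ruled out because it would force $\alpha_{j} = -\beta \in \Delta^{-}$, while $w\alpha_{j} = -\alpha_{i}$ holds exactly when $\alpha_{j} = \beta$. In case (a), $\beta$ is not simple, so no such $j$ exists, signs are preserved, and $I_{s_{i}w} = I_{w}$. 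In case (b), $\beta = \alpha_{k}$ with $k \in I_{w}$, and only the index $k$ flips sign — from $w\alpha_{k} = -\alpha_{i}$ to $(s_{i}w)\alpha_{k} = \alpha_{i}$ — yielding $I_{s_{i}w} = I_{w} \setminus \{k\}$.

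I do not expect a genuine obstacle here: the argument is a direct combinatorial computation in the finite root system $\Delta$, and its only subtle point — the implication $\beta \in \Delta_{I_{w}}^{+} \Rightarrow \beta$ simple — is handled cleanly by the height argument, which exploits the fact that a simple root admits no nontrivial $\BZ_{\ge 0}$-decomposition into positive roots.
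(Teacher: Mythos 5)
Your argument is correct. Note that the paper itself gives no proof of this lemma --- it is quoted from \cite[Lemma~3.1.1]{NNS2} --- so your write-up has to stand on its own, and it does: the reduction to $\beta=-w^{-1}\alpha_{i}\in\Delta^{+}$ with $s_{i}w=ws_{\beta}$, the standard fact that the reflections lying in the standard parabolic $W_{I_{w}}$ are exactly the $s_{\gamma}$ with $\gamma\in\Delta_{I_{w}}^{+}$, the height argument showing that $\alpha_{i}=\sum_{j\in I_{w}}c_{j}(-w\alpha_{j})$ with all $-w\alpha_{j}\in\Delta^{+}$ forces a single term $c_{j^{*}}=1$ with $-w\alpha_{j^{*}}=\alpha_{i}$, and the sign bookkeeping via the fact that $s_{i}$ permutes $\Delta^{+}\setminus\{\alpha_{i}\}$ are all sound, and together they give both the equivalences and the computation of $I_{s_{i}w}$ in cases (a) and (b). The only ingredients you invoke beyond direct computation (the left descent criterion $s_{i}w<w\iff w^{-1}\alpha_{i}\in\Delta^{-}$, the identity $ws_{\gamma}w^{-1}=s_{w\gamma}$, and the description of reflections in a standard parabolic) are standard and available in the references the paper already uses, so no gap remains.
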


Now, we recall from \cite[Sect.~3.2]{NNS1} and \cite[Sect.~2.2]{NNS2} 
the definition of the subsets $\EQB(w) \subset W$ for $w \in W$.
For each $w \in W$, take and fix a reduced expression 
\begin{equation} \label{eq:redw}
w=s_{i_{1}}s_{i_{2}} \cdots s_{i_{p}}.
\end{equation}
Then we take $i_{-q},\,i_{-q+1},\,\dots,\,i_{0} \in I$ 
in such a way that $s_{i_{-q}} \cdots s_{i_{0}}s_{i_{1}} \cdots s_{i_{p}}$ 
is a reduced expression for the longest element $\lng \in W$. We set
\begin{equation} \label{eq:beta0}
\beta_{k}:=s_{i_{p}} \cdots s_{i_{k+1}}\alpha_{i_{k}} \in \Delta^{+} \qquad 
\text{for $-q \le k \le p$}.
\end{equation}
Since $\beta_{-q} < \cdots < \beta_{p}$ is a reflection order on $\Delta^{+}$, 
we know (see, e.g., \cite[Theorem~7.3]{LNSSS}) that for each $u \in W$, 
there exists a unique shortest directed path 
\begin{equation} \label{eq:wu0}
w=x_{0} \edge{\beta_{j_1}} x_{1} \edge{\beta_{j_2}} \cdots \edge{\beta_{j_s}} x_{s} = u
\end{equation}
in $\QB$ from $w$ to $u$ such that 
$-q \le j_{1} < j_{2} < \cdots < j_{s} \le p$, 
which we call the ``label-increasing'' directed path. 
The subset $\EQB(w) \subset W$ is defined to be the set of all those elements $u \in W$ 
whose label-increasing directed path \eqref{eq:wu0} from $w$ to $u$ 
satisfies $j_{1} \ge 1$; we know from \cite[Proposition~3.2.5]{NNS1} that 
this definition of $\EQB(w)$ does not depend on the choice of 
a reduced expression \eqref{eq:redw} for $w$. 
The subsets $\EQB(w)$, $w \in W$, above are also determined through 
the following recursive formula by descending induction. 
%
%
\begin{prop}[{\cite[Proposition~3.2.3]{NNS2}}] \label{prop:323}
\mbox{}
\begin{enu}
\item For the longest element $\lng \in W$, 
it holds that $\EQB(\lng)=W$. 

\item Let $w \in W$ and $i \in I$ be such that $s_{i}w < w$. 
If $s_{i}w \notin wW_{I_{w}}$, then 
\begin{equation} \label{eq:2a}
\begin{cases}
\EQB(w) \cap \EQB(s_{i}w) = \emptyset, & \\[1.5mm]
\EQB(w) \cup s_{i}\EQB(w) = \EQB(w) \sqcup \EQB(s_{i}w); &
\end{cases}
\end{equation}
if $s_{i}w \in wW_{I_{w}}$, then 
\begin{equation} \label{eq:2b}
\begin{cases}
\EQB(s_{i}w) = \bigl\{ v \in \EQB(w) \mid s_{i}w \le_{w} v \bigr\}, & \\[1.5mm]
s_{i}\EQB(w) = \EQB(w),
\end{cases}
\end{equation}
where $\le_{w}$ is the $w$-tilted Bruhat order on $W$ 
{\rm(}see Definition~\ref{dfn:tilted}{\rm)}.
\end{enu}
\end{prop}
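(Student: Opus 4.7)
Part (1) is immediate from the convention that when $w = \lng$ the extending prefix $s_{i_{-q}}\cdots s_{i_0}$ is empty, so every index $k$ with $\beta_k$ defined satisfies $k \ge 1$ and the condition $j_1 \ge 1$ is vacuous. For Part (2), I would fix a reduced expression of $w$ starting with $s_i$ (possible since $s_i w < w$), say $w = s_i s_{i_2}\cdots s_{i_p}$ with $i_1 := i$, and extend it to $\lng = s_{i_{-q}}\cdots s_{i_0}\, s_i\, s_{i_2}\cdots s_{i_p}$. The induced reduced expression for $s_i w = s_{i_2}\cdots s_{i_p}$ is extended by $s_{i_{-q}}\cdots s_{i_0}\, s_i$, shifting indices by one, so the reflection orders for $w$ and $s_i w$ coincide as total orders on $\Delta^+$ via $\beta'_k = \beta_{k+1}$. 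The critical root is $\beta_1 = s_{i_p}\cdots s_{i_2}\alpha_i = -w^{-1}\alpha_i$, which satisfies $w s_{\beta_1} = s_i w$; by the length formulas for $\QB$-edges, $w \edge{\beta_1} s_i w$ exists (necessarily as a quantum edge with length-drop one) if and only if $\pair{\rho}{\beta_1^\vee} = 1$, i.e., $\beta_1$ is simple. By Lemma~\ref{lem:311}, this simplicity condition distinguishes case (b) from case (a).

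In case (a), $\beta_1$ is not simple, so the edge $w \edge{\beta_1}$ is absent and no label-increasing shortest path from $w$ can begin with $\beta_1$; hence $v \in \EQB(w)$ reads as ``first label $\beta_k$ with $k \ge 2$'', the same unshifted condition as ``first label $\beta'_{k'}$ with $k' \ge 1$'' defining $\EQB(s_i w)$ from the shifted starting point. I would construct the bijection $v \mapsto s_i v : \EQB(w) \to \EQB(s_i w)$ by invoking Lemma~\ref{lem:dia-qb}, which supplies length- and weight-preserving ``diamond'' correspondences between shortest paths under left multiplication by $s_i$: depending on the sign of $v^{-1}\alpha_i$, one obtains either $\ell(s_i w \Rightarrow s_i v) = \ell(w \Rightarrow v)$ or $\ell(s_i w \Rightarrow v) = \ell(w \Rightarrow v) + 1$, and the transported label sequence remains label-increasing with first label in the required range. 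The disjointness $\EQB(w) \cap \EQB(s_i w) = \emptyset$ follows because no $v \in \EQB(w)$ equals its image $s_i v$, and the identity $\EQB(w) \cup s_i \EQB(w) = \EQB(w) \sqcup \EQB(s_i w)$ is then immediate.

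In case (b), $\beta_1 = \alpha_k$ is simple with $\alpha_k := -w^{-1}\alpha_i$, so $s_i w = w s_k$ (with $k \in I_w$) and the edge $w \edge{\beta_1} s_i w$ is present. For any $v \in \EQB(s_i w)$, the label-increasing shortest path $s_i w \Rightarrow v$ (whose first label has unshifted index $\ge 2$) can be prepended by $w \edge{\beta_1} s_i w$ to yield a label-increasing path from $w$ of length $1 + \ell(s_i w \Rightarrow v)$; this concatenation is shortest precisely when $s_i w \le_w v$ (by Definition~\ref{dfn:tilted}), giving the equality $\EQB(s_i w) = \{v \in \EQB(w) : s_i w \le_w v\}$ after the converse inclusion is checked by the same concatenation. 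For $s_i \EQB(w) = \EQB(w)$, the identity $s_i w = w s_k$ combined with Lemma~\ref{lem:dia-qb} produces an involution $v \leftrightarrow s_i v$ on $\EQB(w)$ that swaps $\EQB(s_i w)$ with its complement in $\EQB(w)$.

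The main obstacle, common to both cases, will be the careful bookkeeping for label-increasing shortest paths under left multiplication by $s_i$: one must verify that the diamond correspondences from Lemma~\ref{lem:dia-qb} transport not merely shortest paths but \emph{label-increasing} shortest paths, and that the first-label index condition is preserved (or shifted by exactly the right amount) under each sub-case according to whether $v^{-1}\alpha_i \in \Delta^+$ or $\Delta^-$, and similarly for the intermediate vertices. Tracking the quantum-versus-Bruhat nature of individual edges through these transformations, while maintaining the reflection-order discipline and the unique label-increasing representative of the shortest-path class, will account for the bulk of the technical work.
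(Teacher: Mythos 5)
The paper itself does not prove this proposition; it is quoted verbatim from [NNS2, Proposition~3.2.3], so your argument has to stand entirely on its own, and it does not. Part (1) is fine, and your observation that the edge $w \edge{\beta_1} s_iw$ exists in $\QB$ precisely when $\beta_1=-w^{-1}\alpha_i$ is simple is correct and matches the dichotomy of Lemma~\ref{lem:311}. The fatal problem is in case (a): your whole strategy is to show that $v \mapsto s_iv$ is a bijection from $\EQB(w)$ onto $\EQB(s_iw)$, i.e.\ that $\EQB(s_iw)=s_i\EQB(w)$. That is strictly stronger than \eqref{eq:2a}, which only asserts $\EQB(s_iw)=s_i\EQB(w)\setminus\EQB(w)$ together with disjointness, and it is false. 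In type $B_2$ (simple roots $\alpha_1$ long, $\alpha_2$ short), take $w=s_2s_1s_2$ and $i=2$; then $-w^{-1}\alpha_2=\alpha_1+\alpha_2$ is not simple, so we are in case (a), and a direct computation with the reflection order $\alpha_1<\alpha_1+\alpha_2<\alpha_1+2\alpha_2<\alpha_2$ gives $\EQB(w)=\{s_2s_1s_2,\,s_2s_1,\,s_2,\,e\}$ while $\EQB(s_2w)=\EQB(s_1s_2)=\{s_1s_2,\,s_1\}$. The two sets have different cardinalities, so no bijection exists; here $s_2\EQB(w)\cap\EQB(w)=\{s_2,e\}\neq\emptyset$, while \eqref{eq:2a} still holds because $s_2\EQB(w)\setminus\EQB(w)=\{s_1s_2,s_1\}=\EQB(s_2w)$. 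Relatedly, your justification of the disjointness $\EQB(w)\cap\EQB(s_iw)=\emptyset$ (``no $v\in\EQB(w)$ equals its image $s_iv$'') is a non sequitur: what has to be excluded is $v'=s_iv$ for two \emph{distinct} elements $v,v'$ of $\EQB(w)$ landing in $\EQB(s_iw)$, and, as the example shows, $\EQB(w)$ can perfectly well contain pairs $\{v,s_iv\}$.

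Case (b) has gaps of the same nature. The claimed involution on $\EQB(w)$ ``swapping $\EQB(s_iw)$ with its complement in $\EQB(w)$'' would force $|\EQB(s_iw)|=|\EQB(w)|/2$, which already fails in type $A_2$: for $w=\lng$ and $i=2$ one has $\EQB(\lng)=W$ (six elements) but $\EQB(s_2\lng)=\EQB(s_1s_2)=\{s_1s_2,s_1\}$ (two elements). Moreover, the inclusion $\EQB(s_iw)\subseteq\{v\in\EQB(w)\mid s_iw\le_w v\}$ is not ``checked by the same concatenation'': prepending the quantum edge $w\edge{\alpha_k}s_iw$ to a path from $s_iw$ certifies nothing unless one first proves $\ell(w\Rightarrow v)=1+\ell(s_iw\Rightarrow v)$ for $v\in\EQB(s_iw)$ (for which one needs, e.g., $v^{-1}\alpha_i\in\Delta^{+}$, the content of Lemma~\ref{lem:tec2}, whose proof in this paper already relies on the proposition you are trying to prove). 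Finally, the transport mechanism itself is unsubstantiated: labels in $\QB$ are attached to \emph{right} multiplication, so left multiplication by $s_i$ does not send edges to edges, and Lemma~\ref{lem:dia-qb} controls only distances and weights, not label sequences; the assertion that the label-increasing shortest path is carried to a label-increasing shortest path with the correct first-label index is exactly the core of the matter, which you defer as ``bookkeeping''. As it stands the proposal is a plan rather than a proof, and in case (a) the plan is aimed at a statement that is false.
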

%
%
\subsection{Description of $\kq{w}$ in terms of the quantum Bruhat graph.}
\label{subsec:kw}
In the case $\J = \emptyset$, for $w \in W=W^{\emptyset}$, we write $\kp{w}$ 
for the subset $K_{w}^{\emptyset}$ defined by \eqref{eq:kw}, that is, 
%
%
\begin{equation} \label{eq:kw2}
\kp{w} = (W_{\af})_{\sige w} \setminus 
\bigcup_{z \in W,\,z > w} (W_{\af})_{\sige z}. 
\end{equation}
%
%
\begin{prop} \label{prop:kappa}
For $w \in W$, the subset $\kp{w}$ is 
identical to the set $\bigl\{ u t_{\xi} \in W_{\af} \mid 
u \in \EQB(w),\,\xi \in \wt(w \Rightarrow u) + Q_{I_{w}}^{\vee,+} \bigr\}$, 
where $\EQB(w)$, $\wt(w \Rightarrow u)$, and $I_{w}$ are as defined in 
Section~{\rm \ref{subsec:EQB}}, \eqref{eq:wtdp}, and \eqref{eq:Iw}, 
respectively. 
\end{prop}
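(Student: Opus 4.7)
The starting point is Lemma~\ref{lem:NNS} specialized to $\J = \emptyset$: for any $u, v \in W$ and $\xi \in Q^{\vee}$, one has $u t_{\xi} \sige v$ if and only if $\xi - \wt(v \Rightarrow u) \in Q^{\vee,+}$. Hence $u t_{\xi}$ lies in $\kp{w}$ if and only if $\xi - \wt(w \Rightarrow u) \in Q^{\vee,+}$ \emph{and} $\xi - \wt(z \Rightarrow u) \notin Q^{\vee,+}$ for every $z \in W$ with $z > w$; by transitivity of $\sige$ together with Remark~\ref{rem:SB}, the latter forbidden inequalities need only be checked at the Bruhat coverings of $w$. The remainder of the proof is to show that these two conditions on $(u,\xi)$ are jointly equivalent to ``$u \in \EQB(w)$ and $\xi - \wt(w \Rightarrow u) \in Q_{I_{w}}^{\vee,+}$.''

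I plan to carry out this matching by descending induction on $w$ in the (ordinary) Bruhat order on $W$, using the recursive characterization of $\EQB(w)$ provided by Proposition~\ref{prop:323}. The base case $w = \lng$ is immediate, since $\EQB(\lng) = W$, $I_{\lng} = I$ (so $Q_{I_{\lng}}^{\vee,+} = Q^{\vee,+}$), and no $z \in W$ satisfies $z > \lng$, so the right-hand side of the proposition reduces to $(W_{\af})_{\sige \lng} = \kp{\lng}$. For the inductive step, assume the description for $w$ and let $i \in I$ with $s_{i}w < w$. Lemma~\ref{lem:311} splits the analysis into two cases, in parallel with Proposition~\ref{prop:323}(2): Case~(A), where $s_{i}w \notin wW_{I_{w}}$ (so $I_{s_{i}w} = I_{w}$ and $\EQB(s_{i}w) = s_{i}\EQB(w) \setminus \EQB(w)$), and Case~(B), where $s_{i}w = w s_{k} \in w W_{I_{w}}$ with $\alpha_{k} = -w^{-1}\alpha_{i}$ (so $I_{s_{i}w} = I_{w} \setminus \{k\}$ and $\EQB(s_{i}w) = \{v \in \EQB(w) : s_{i}w \le_{w} v\}$).

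In each case, the bridge from $\kp{w}$ (known by induction) to $\kp{s_{i}w}$ is supplied by Lemma~\ref{lem:dia-qb}, which records how the weight $\wt(v \Rightarrow u)$ of a shortest directed path in $\QB$ transforms when its source or target is multiplied by a simple reflection $s_{i}$, according to the signs of $v^{-1}\alpha_{i}$ and $u^{-1}\alpha_{i}$. Together with Lemma~\ref{lem:wtS} (for translating between coset-level weights), this lets one rewrite $\wt(s_{i}w \Rightarrow u)$ in terms of $\wt(w \Rightarrow u)$ or $\wt(w \Rightarrow s_{i}u)$, and so the defining inequalities for $u t_{\xi} \in \kp{s_{i}w}$, namely $\xi \ge \wt(s_{i}w \Rightarrow u)$ and $\xi \not\ge \wt(z \Rightarrow u)$ for all $z > s_{i}w$, can be tested directly against the right-hand side description evaluated at $s_{i}w$.

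I expect the main obstacle to lie in Case~(B). There the cone $Q_{I_{w}}^{\vee,+}$ shrinks to $Q_{I_{w} \setminus \{k\}}^{\vee,+}$ and simultaneously $\EQB(w)$ is carved down by the tilted Bruhat condition $s_{i}w \le_{w} v$; these two shrinkings, together with the shift from $\wt(w \Rightarrow u)$ to $\wt(s_{i}w \Rightarrow u)$, must exactly reproduce those elements of $(W_{\af})_{\sige s_{i}w}$ that escape the upper Bruhat cones of every $z > s_{i}w$, most notably $z = w$ itself. Translating this into a clean equivalence — essentially, that the coefficient of $\alpha_{k}^{\vee}$ in $\xi - \wt(s_{i}w \Rightarrow u)$ is governed precisely by the condition $s_{i}w \le_{w} u$ of Definition~\ref{dfn:tilted} — is the technical crux, and will require combining Lemma~\ref{lem:dia-qb}(1) applied at the edge $w \edge{w\alpha_{k}} ws_{k} = s_{i}w$ with the path-concatenation property built into the $w$-tilted Bruhat order.
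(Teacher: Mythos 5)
Your outline reproduces the paper's top-level strategy (descending induction from $\lng$, base case via Lemma~\ref{lem:NNS}, the recursion of Proposition~\ref{prop:323} together with Lemma~\ref{lem:311}, and Lemma~\ref{lem:dia-qb} to transport weights), but as it stands it has a genuine gap: the inductive step is only gestured at, and the ingredients that actually make it work are missing. First, to ``test the defining inequalities for $ut_{\xi}\in \kp{s_{i}w}$ against the right-hand side at $s_{i}w$'' you must relate them to the induction hypothesis, which is a statement about $\kp{w}$; the only available mechanism is to pass to $s_{i}ut_{\xi}$ and transfer the conditions ``$ut_{\xi}\not\sige z$ for \emph{all} $z>s_{i}w$'' to conditions at $w$, and this forces you to use the semi-infinite diamond lemma (Lemma~\ref{lem:dia-si}) and Lemma~\ref{lem:dia-b}, case-splitting on the sign of $z^{-1}\alpha_{i}$ — none of which appears in your plan. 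Your Case~(B) ``crux'' only addresses the single exclusion $z=w$ (and, as stated, conflates a condition on $u$ with one on $\xi$: given $u\in\EQB(s_{i}w)$ one automatically has $s_{i}w\le_{w}u$ by \eqref{eq:2b}, whence $\wt(w\Rightarrow u)=\wt(s_{i}w\Rightarrow u)+\alpha_{k}^{\vee}$, and it is the vanishing of the $\alpha_{k}^{\vee}$-coefficient of $\xi-\wt(s_{i}w\Rightarrow u)$ that encodes $ut_{\xi}\not\sige w$); the infinitely many other exclusions are exactly what the transfer-plus-induction argument is for. Incidentally, the edge you invoke is $w\edge{\alpha_{k}}s_{i}w$ (a quantum edge), not an edge labeled $w\alpha_{k}$.

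Second, two auxiliary facts that your induction cannot do without are not identified and do not follow formally from Proposition~\ref{prop:323}: (i) every $u\in\EQB(s_{i}w)$ satisfies $u^{-1}\alpha_{i}\in\Delta^{+}$ (the paper's Lemma~\ref{lem:tec2}), which is what legitimizes applying Lemma~\ref{lem:dia-qb}\,(2) to get $\wt(s_{i}w\Rightarrow u)=\wt(w\Rightarrow s_{i}u)$ and sets up the correspondence $ut_{\xi}\leftrightarrow s_{i}ut_{\xi}$; and (ii) for the inclusion $\kp{s_{i}w}\subset$ RHS you must show that the finite part $u$ of any $ut_{\xi}\in\kp{s_{i}w}$ lies in $\EQB(s_{i}w)$. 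In the paper this is Lemma~\ref{lem:fin} ($\fin(\kp{w})=\EQB(w)$), whose proof rests on the non-recursive tilted-Bruhat characterization of $\EQB(w)$ (Lemma~\ref{lem:tec1}) — itself a separate induction using reflection orders, label-increasing paths and Proposition~\ref{prop:81} — rather than on the recursion \eqref{eq:2a}--\eqref{eq:2b} alone. Without supplying (i) and (ii), or substitutes for them, the matching of the two sides in your inductive step cannot be carried out, so the proposal as written is an incomplete plan rather than a proof.
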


In order to prove Proposition~\ref{prop:kappa}, 
we need some lemmas. 
%
%
\begin{lem} \label{lem:w0}
If $w = \lng$, then $\kp{\lng}= 
\bigl\{u t_{\xi} \in W_{\af} \mid 
u \in W,\,\xi \in Q^{\vee},\,\xi \ge \wt(\lng \Rightarrow u) \bigr\}$. 
\end{lem}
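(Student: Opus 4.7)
The plan is to observe that the statement of Lemma~\ref{lem:w0} is essentially an immediate consequence of Lemma~\ref{lem:NNS} in the special case $\J=\emptyset$, $v=\lng$, and $\zeta=0$, together with the fact that $\lng$ is maximal in the (ordinary) Bruhat order on $W$.

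First I would reduce $\kp{\lng}$ to a semi-infinite Bruhat interval. Since $\lng$ is the longest element of $W$, the set $\{z \in W \mid z > \lng\}$ is empty, so the definition \eqref{eq:kw2} collapses to $\kp{\lng} = (W_{\af})_{\sige \lng}$. Hence I only need to describe elements $y \in W_{\af}$ satisfying $y \sige \lng$.

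Next I would unpack the relevant notation for the case $\J = \emptyset$. In this case $W_{\emptyset} = \{e\}$, $Q^{\vee}_{\emptyset} = 0$, the projection $\PJ$ is the identity on $W_{\af}$, $[\,\cdot\,]^{\emptyset}$ is the identity on $Q^{\vee}$, and $\Pi^{\emptyset}(t_{\xi}) = t_{\xi}$ for all $\xi \in Q^{\vee}$. Since $W_{\af} \cong W \ltimes Q^{\vee}$, every element of $W_{\af}$ can be written uniquely as $u t_{\xi}$ with $u \in W$ and $\xi \in Q^{\vee}$. Then Lemma~\ref{lem:NNS} (with $\J = \emptyset$) specializes to the statement
\begin{equation*}
u t_{\xi} \sige v t_{\zeta} \iff \xi \ge \wt(v \Rightarrow u) + \zeta
\end{equation*}
for all $u,v \in W$ and $\xi,\zeta \in Q^{\vee}$.

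Finally, I would substitute $v = \lng$ and $\zeta = 0$ (so that $v t_{\zeta} = \lng$) to conclude that $u t_{\xi} \sige \lng$ if and only if $\xi \ge \wt(\lng \Rightarrow u)$. Combined with the reduction $\kp{\lng} = (W_{\af})_{\sige \lng}$ from the first step, this gives exactly the claimed description of $\kp{\lng}$. There is no genuine obstacle here: the lemma is a direct specialization of the previously established Lemma~\ref{lem:NNS}, and I would spend one short paragraph in the write-up simply verifying the $\J = \emptyset$ specializations of notation so that the application is unambiguous.
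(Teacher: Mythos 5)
Your argument is correct and is essentially the paper's own proof: the paper also observes that there is no $z \in W$ with $z > \lng$, so $\kp{\lng} = (W_{\af})_{\sige \lng}$, and then applies Lemma~\ref{lem:NNS} with $\J = \emptyset$. Your extra paragraph unpacking the $\J = \emptyset$ specializations (e.g.\ $\PJ = \id$, $[\,\cdot\,]^{\emptyset} = \id$) is a harmless elaboration of the same one-line argument.
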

\begin{proof}
The assertion follows immediately 
from Lemma~\ref{lem:NNS} (with $\J=\emptyset$), 
since there is no $z \in W$ such that $z > \lng$. 
\end{proof}
%
%
\begin{lem} \label{lem:tec1}
Let $w \in W$. We have
%
%
\begin{equation} \label{eq:tec1}
\EQB(w) = \bigl\{ u \in W \mid 
\text{\rm there does not exist $z \in W$ such that $z > w$ and $z \le_{w} u$} \bigr\}. 
\end{equation}
\end{lem}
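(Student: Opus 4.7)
The approach is to reinterpret the condition $j_1 \ge 1$ in the definition of $\EQB(w)$ as a statement about the first edge of the label-increasing shortest directed path in $\QB$, and then check the two inclusions separately.

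\textbf{A translation of the definition of $\EQB(w)$.}
First I would verify, using the chosen reduced expression \eqref{eq:redw}, that $\{\beta_k : 1 \le k \le p\}$ coincides with the inversion set $N(w^{-1}) = \{\beta \in \Delta^+ : w\beta \in \Delta^-\}$: indeed, for $1 \le k \le p$ one computes $w\beta_k = w\cdot s_{i_p}\cdots s_{i_{k+1}}\alpha_{i_k} = -s_{i_1}\cdots s_{i_{k-1}}\alpha_{i_k}$, which lies in $\Delta^-$ since $s_{i_1}\cdots s_{i_{k-1}}\alpha_{i_k}$ is a positive root (being in the usual inversion set of $w$). Cardinality counts force $\{\beta_k : -q \le k \le 0\} = \Delta^+\setminus N(w^{-1})$. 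Combined with the edge classification for $\QB$, this says: if $w \edge{\beta_k} ws_{\beta_k}$ is an edge in $\QB$, then it is a Bruhat edge (so $ws_{\beta_k} > w$) exactly when $k \le 0$, and a quantum edge (so $ws_{\beta_k} < w$) exactly when $k \ge 1$. Consequently, for $u \ne w$, the statement $u \in \EQB(w)$ is equivalent to: the first edge of the label-increasing shortest directed path from $w$ to $u$ in $\QB$ is a quantum edge.

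\textbf{Inclusion $\supseteq$, by contrapositive.}
Assume $u \notin \EQB(w)$, so the label-increasing shortest directed path is $w = x_0 \edge{\beta_{j_1}} x_1 \to \cdots \to x_s = u$ with $j_1 \le 0$. By the translation above, $x_1 = ws_{\beta_{j_1}} > w$ in the Bruhat order. Since the tail $x_1 \to \cdots \to u$ is itself a shortest directed path of length $s-1 = \ell(w \Rightarrow u) - 1$, the identity $\ell(w \Rightarrow u) = \ell(w \Rightarrow x_1) + \ell(x_1 \Rightarrow u)$ holds, so $x_1 \le_w u$ by Definition~\ref{dfn:tilted}. Taking $z := x_1$ yields an element of $W$ with $z > w$ and $z \le_w u$, negating the right-hand side condition.

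\textbf{Inclusion $\subseteq$, by contrapositive.}
Suppose there is $z \in W$ with $z > w$ and $z \le_w u$, and choose a saturated Bruhat chain $w = y_0 \lessdot y_1 \lessdot \cdots \lessdot y_r = z$, a shortest directed path in $\QB$ of length $r = \ell(z) - \ell(w)$ consisting of Bruhat edges. Concatenating with a shortest directed path $z \to \cdots \to u$ gives a shortest directed path $\bp$ from $w$ to $u$ whose first edge $w \edge{\gamma} y_1$ is a Bruhat covering with label $\gamma = \beta_m$ for some $m \le 0$. To conclude $u \notin \EQB(w)$ it then suffices to show that the label-increasing shortest directed path $\bp^*$ also begins with a label of index $\le 0$, and I would deduce this from the following \emph{minimality claim}: among the first labels of all shortest directed paths from $w$ to $u$, the first label of $\bp^*$ is the minimum with respect to the reflection order $\beta_{-q} < \cdots < \beta_p$. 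Granting the claim, the first label $\beta_{j_1}$ of $\bp^*$ satisfies $j_1 \le m \le 0$, so $u \notin \EQB(w)$ by the translation.

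\textbf{Main obstacle.}
The substantive ingredient is the minimality claim. I would prove it by a bubble-sort argument: successively applying rank-two ``diamond'' swaps in $\QB$ to out-of-order adjacent label-pairs transforms any shortest directed path into the unique label-increasing one, and an induction on the number of such inversions shows that each swap weakly decreases the first label of the path (a swap at positions $(k,k+1)$ with $k \ge 2$ leaves the first label fixed, while a swap at positions $(1,2)$ strictly decreases it, as can be read off from the rank-two sub-system generated by the two swapped labels). The diamond moves required are the natural extension of Lemma~\ref{lem:dia-qb} to arbitrary rank-two sub-root-systems, which is standard in the quantum-Bruhat-graph literature; once this is in hand, every remaining step of the proof is bookkeeping.
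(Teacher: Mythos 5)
Your proof is correct, but for the hard inclusion it takes a genuinely different and shorter route than the paper. The easy direction (every $u$ outside $\EQB(w)$ admits such a $z$, namely $z=x_{1}$) coincides with the paper's first paragraph. For the other direction, however, the paper argues by descending induction on $\ell(w)$, running through the recursive description of $\EQB$ from Proposition~\ref{prop:323} together with Lemmas~\ref{lem:dia-qb} and \ref{lem:dia-b}, with several cases and subcases; you instead argue directly: given $z>w$ with $z\le_{w}u$, you concatenate a saturated Bruhat chain from $w$ to $z$ (which is a shortest path in $\QB$ consisting of Bruhat edges, since every edge of $\QB$ raises length by at most one) with a shortest path from $z$ to $u$; by the definition of $\le_{w}$ the concatenation is a shortest path from $w$ to $u$ whose first label is some $\beta_{m}$ with $m\le 0$, and lexicographic minimality of the label-increasing path forces $j_{1}\le m\le 0$, so $u\notin\EQB(w)$. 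This is sound, and it bypasses the recursion from \cite{NNS2} entirely; what it costs is nothing new, because your ``main obstacle'' (the first label of the label-increasing shortest path is minimal among first labels of all shortest paths) is exactly the lexicographic minimality statement of \cite[Theorem~7.3]{LNSSS}, which the paper itself cites and uses inside its own proof (Subcase~1.1). So you can simply cite that theorem instead of re-proving it by your bubble-sort sketch, which as written is only an outline and would need care (rank-two diamond moves and the claim that a swap at the first position decreases the first label). Your preliminary translation of the condition $j_{1}\ge 1$ via the inversion set of $w^{-1}$ is also correct and matches the computation the paper performs for $j_{1}\le 0$. In short: same easy half, a more direct and self-contained hard half (modulo a citable result), versus the paper's longer induction that leans on the machinery of \cite{NNS2} developed for other purposes.
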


\begin{proof}
Let $u$ be an element of the set on the right-hand side in \eqref{eq:tec1}. 
Define $\beta_{k}$, $-q \le k \le p$, as in \eqref{eq:beta0}, and take 
the label-increasing directed path 
\begin{equation} \label{eq:wu1}
w=x_{0} \edge{\beta_{j_1}} x_{1} \edge{\beta_{j_2}} \cdots \edge{\beta_{j_s}} x_{s} = u
\end{equation}
in $\QB$ from $w$ to $u$ (see \eqref{eq:wu0}). 
Suppose, for a contradiction, that $j_{1} \le 0$. Then we see that 
\begin{equation*}
w\beta_{j_1} = (s_{i_1} \cdots s_{i_p})
(s_{i_p} \cdots s_{i_1}s_{i_0} \cdots s_{i_{j_1+1}}\alpha_{i_{j_1}})
=s_{i_0} \cdots s_{i_{j_1+1}}\alpha_{i_{j_1}} \in \Delta^{+},
\end{equation*}
which implies that $w=x_{0} \edge{\beta_{j_1}} x_{1}$ is a Bruhat edge, and hence $z:=x_{1} > w$. 
Since \eqref{eq:wu1} is a shortest directed path from $w$ to $u$ passing through $z=x_{1}$, 
we see that $z \le_{w} u$, which is a contradiction. 
Thus we obtain $j_{1} \ge 1$, and hence $u \in \EQB(w)$. 

Next, we prove the opposite inclusion $\subset$ by descending induction on $\ell(w)$. 
If $w=\lng$, then the assertion is obvious since 
there does not exist $z \in W$ such that $z > \lng$. 
Let $w \in W$ and $i \in I$ be such that $s_{i}w < w$; 
note that $w^{-1}\alpha_{i} \in \Delta^{-}$. 
Then we can take a reduced expression \eqref{eq:redw} for $w$ 
such that $i_{1}=i$; in this case, $\beta_{1}=-w^{-1}\alpha_{i}$. 
Assume that \eqref{eq:tec1} holds for $w$ (the induction hypothesis), 
and suppose, for a contradiction, that 
for some $u \in \EQB(s_{i}w)$, 
there exists $z \in W$ such that $z > s_{i}w$ and $z \le_{s_{i}w} u$; note that
%
%
\begin{equation} \label{eq:tzu}
\ell(s_{i}w \Rightarrow u) = \ell(s_{i}w \Rightarrow z) + \ell(z \Rightarrow u).
\end{equation}
%
\paragraph{Case 1.}
%
Assume that $s_{i}u > u$, or equivalently, $u^{-1}\alpha_{i} \in \Delta^{+}$. 
We see from Proposition~\ref{prop:323}\,(2) that $s_{i}u \in \EQB(w)$. 
Indeed, if $s_{i}w \notin wW_{\Iw}$, then it follows from 
the second equality of \eqref{eq:2a} that 
$u \in \EQB(w)$ or $s_{i}u \in \EQB(w)$. However, 
since $\EQB(w) \cap \EQB(s_{i}w) = \emptyset$ 
by the first equality of \eqref{eq:2a}, and 
since $u \in \EQB(s_{i}w)$, we obtain $u \notin \EQB(w)$, and hence 
$s_{i}u \in \EQB(w)$. Also, if $s_{i}w \in wW_{\Iw}$, 
then we have $u \in \EQB(s_{i}w) \subset \EQB(w)$ 
by the first equality of \eqref{eq:2b}. Hence we have 
$s_{i}u \in \EQB(w)$ by the second equality of \eqref{eq:2b}.

\paragraph{Subcase 1.1.}
%
Assume that $s_{i}z < z$, or equivalently, $z^{-1}\alpha_{i} \in \Delta^{-}$. 
By Lemmas~\ref{lem:dia-b}\,(1) and \ref{lem:dia-qb}\,(1), we deduce that 
$z \ge w$ and $\ell(w \Rightarrow z) = \ell(s_{i}w \Rightarrow z) - 1$.
Since $u^{-1}\alpha_{i} \in \Delta^{+}$, we have a Bruhat edge
$u \edge{u^{-1}\alpha_{i}} s_{i}u$, which implies that 
$\ell(z \Rightarrow s_{i}u) \le \ell(z \Rightarrow u) +1$. 
Also, since $u^{-1}\alpha_{i} \in \Delta^{+}$ and 
$(s_{i}w)^{-1}\alpha_{i} \in \Delta^{+}$, 
it follows from Lemma~\ref{lem:dia-qb}\,(2) that
$\ell(s_{i}w \Rightarrow u) = \ell(w \Rightarrow s_{i}u)$. 
Combining these, we see that
\begin{align*}
\ell(s_{i}w \Rightarrow u) & = 
\ell(w \Rightarrow s_{i}u) \le 
\ell(w \Rightarrow z) + \ell(z \Rightarrow s_{i}u) \\
& 
\le \ell(s_{i}w \Rightarrow z)-1 + \ell(z \Rightarrow u) +1 
= \ell(s_{i}w \Rightarrow z) + \ell(z \Rightarrow u) \\
& = \ell(s_{i}w \Rightarrow u) \quad \text{by \eqref{eq:tzu}}. 
\end{align*}
In particular, we obtain 
$\ell(w \Rightarrow s_{i}u)  =  
\ell(w \Rightarrow z) + \ell(z \Rightarrow s_{i}u)$, 
which implies that $z \le_{w} s_{i}u$. 
Here we recall that $s_{i}u \in \EQB(w)$ and $z \ge w$, as seen above. 
Therefore, by the induction hypothesis, we must have $z=w$. 
In particular, we obtain $w \le_{s_{i}w} u$. 

By concatenating the label-increasing (shortest) directed path \eqref{eq:wu1} 
in $\QB$ from $w$ to $u$ with the Bruhat edge $s_{i}w \edge{\beta_{1}} w$, 
we obtain
\begin{equation} \label{eq:swu1}
s_{i}w \edge{\beta_{1}} 
w=x_{0} \edge{\beta_{j_1}} x_{1} \edge{\beta_{j_2}} \cdots \edge{\beta_{j_s}} x_{s} = u, 
\end{equation}
which is a shortest directed path from $s_{i}w$ to $u$ 
since $w \le_{s_{i}w} u$. Let 
\begin{equation} \label{eq:swu2}
s_{i}w = y_{0} \edge{\beta_{k_1}} y_{1} 
\edge{\beta_{k_2}} \cdots \edge{\beta_{k_{s+1}}} y_{s+1} = u
\end{equation}
be the shortest directed path from $s_{i}w$ to $u$ 
such that $-q \le k_{1} < \cdots < k_{s+1} \le p$. 
Since $u \in \EQB(s_{i}w)$ by our assumption, 
we see by \cite[Remark~23]{NNS2} that $k_{1} \ge 2$, 
and hence $\beta_{1} < \beta_{k_1}$ in our fixed reflection order. 
However, we know (see, e.g., \cite[Theorem~7.3]{LNSSS}) that 
the shortest directed path \eqref{eq:swu2} is lexicographically minimal, 
that is, for every shortest directed path 
$s_{i}w = y_{0}' \edge{\gamma_{1}} y_{1}'
\edge{\gamma_{2}} \cdots \edge{\gamma_{s+1}} y_{s+1}' = u$
in $\QB$ from $s_{i}w$ to $u$, 
there exists $1 \le a \le s+1$ such that 
$\gamma_{b}=\beta_{k_{b}}$ for $1 \le b \le a-1$, and 
$\gamma_{a} > \beta_{k_{a}}$. Therefore, we obtain 
$\beta_{k_{1}} \le \beta_{1}$, which is a contradiction.

\paragraph{Subcase 1.2.}
%
Assume that $s_{i}z > z$, or equivalently, $z^{-1}\alpha_{i} \in \Delta^{+}$. 
By Lemma~\ref{lem:dia-b}\,(3), we see that $s_{i}z > w$. 
Also, we deduce from Lemma~\ref{lem:dia-qb}\,(2) that 
$\ell(s_{i}w \Rightarrow z) = \ell(w \Rightarrow s_{i}z)$, 
$\ell(s_{i}w \Rightarrow u) = \ell(w \Rightarrow s_{i}u)$, and 
$\ell(z \Rightarrow u) = \ell(s_{i}z \Rightarrow s_{i}u)$. 
Substituting these equalities into \eqref{eq:tzu}, we obtain 
$\ell(w \Rightarrow s_{i}u) = 
\ell(w \Rightarrow s_{i}z) + \ell(s_{i}z \Rightarrow s_{i}u)$, 
which implies that $s_{i}z \le_{w} s_{i}u$. 
Since $s_{i}u \in \EQB(w)$ and $s_{i}z > w$, as seen above, 
the inequality $s_{i}z \le_{w} s_{i}u$ 
contradicts the induction hypothesis. 

\paragraph{Case 2.}
%
Assume that $s_{i}u < u$, or equivalently, $u^{-1}\alpha_{i} \in \Delta^{-}$. 
Then we have $s_{i}w \in wW_{I_{w}}$. Indeed, 
since $s_{i}w < w$, $s_{i}(s_{i}u) = u > s_{i}u$, 
and $s_{i}u \in \EQB(w)$, as seen above, it follows 
from \cite[Lemma~24\,(2)]{NNS2} that $s_{i}(s_{i}u) = u \in \EQB(w)$. 
Thus we have $u \in \EQB(w) \cap \EQB(s_{i}w)$, 
which contradicts Proposition~\ref{prop:323}\,(2). 
Hence we conclude that $s_{i}w \in wW_{I_{w}}$. In particular, 
we obtain $u \in \EQB(w)$ by Proposition~\ref{prop:323}\,(2). 

\paragraph{Subcase 2.1.}
%
Assume that $s_{i}z < z$, or equivalently, $z^{-1}\alpha_{i} \in \Delta^{-}$. 
By Lemmas~\ref{lem:dia-b}\,(1) and \ref{lem:dia-qb}\,(1), 
we see that $z \ge w$ and 
$\ell(w \Rightarrow z) = \ell(s_{i}w \Rightarrow z) - 1$. 
Similarly, we deduce by Lemma~\ref{lem:dia-qb}\,(1) that
$\ell(w \Rightarrow u) = \ell(s_{i}w \Rightarrow u) -1$. 
Substituting these equalities into \eqref{eq:tzu}, 
we obtain $\ell(w \Rightarrow u) = \ell(w \Rightarrow z) + \ell(z \Rightarrow u)$, 
which implies that $z \le_{w} u$. 
Since $u \in \EQB(w)$ and $z \ge w$, as seen above, 
it follows from the induction hypothesis that $z=w$. 
In particular, we have $w \le_{s_{i}w} u$. 
In exactly the same way as in the second paragraph of Subcase 1.1, 
we obtain a contradiction from this inequality. 

\paragraph{Subcase 2.2.}
%
Assume that $s_{i}z > z$, or equivalently, $z^{-1}\alpha_{i} \in \Delta^{+}$. 
By Lemmas~\ref{lem:dia-b}\,(3) and \ref{lem:dia-qb}\,(2), 
we see that $s_{i}z > w$ and 
$\ell(w \Rightarrow s_{i}z) = \ell(s_{i}w \Rightarrow z)$. 
Also, we deduce by Lemma~\ref{lem:dia-qb}\,(1) that
$\ell(w \Rightarrow u) = \ell(s_{i}w \Rightarrow u) -1$, and 
$\ell(s_{i}z \Rightarrow u) = \ell(z \Rightarrow u) -1$. 
Substituting these equalities into \eqref{eq:tzu}, 
we obtain $\ell(w \Rightarrow u) = 
\ell(w \Rightarrow s_{i}z) + \ell(s_{i}z \Rightarrow u)$, 
which implies that $s_{i}z \le_{w} u$. 
Since $u \in \EQB(w)$ and $s_{i}z > w$, as seen above, 
the inequality $s_{i}z \le_{w} u$ contradicts the induction hypothesis. 

This completes the proof of the lemma. 
\end{proof}
%
%
\begin{lem} \label{lem:tec2}
Let $w \in W$ and $i \in I$ be such that $s_{i}w < w$. 
For every $u \in \EQB(s_{i}w)$, we have $s_{i}u > u$, 
or equivalently, $u^{-1}\alpha_{i} \in \Delta^{+}$. 
\end{lem}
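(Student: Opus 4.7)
\medskip

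\noindent \textbf{Proof plan.}  My plan is to argue by contradiction, using Lemma~\ref{lem:tec1} to translate ``$u \in \EQB(s_i w)$'' into the combinatorial statement that no $z > s_i w$ lies below $u$ in the $s_i w$-tilted Bruhat order on $W$. Assume, for contradiction, that $u^{-1}\alpha_i \in \Delta^{-}$, i.e., $s_i u < u$. The natural candidate for the forbidden $z$ is $z = w$ itself, which clearly satisfies $z > s_i w$; hence it suffices to exhibit a shortest directed path in $\QB$ from $s_i w$ to $u$ that passes through $w$.

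First I would record the following observation: since $s_i w < w$, one has $w^{-1}\alpha_i \in \Delta^{-}$, and therefore
\begin{equation*}
(s_i w)^{-1}\alpha_i \;=\; -w^{-1}\alpha_i \;\in\; \Delta^{+}.
\end{equation*}
Consequently, the edge $s_i w \edge{\,(s_i w)^{-1}\alpha_i\,} w$ is a Bruhat edge in $\QB$, so $\ell(s_i w \Rightarrow w) = 1$.

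Next I would apply Lemma~\ref{lem:dia-qb}(1) with the roles played by $(s_i w, u)$: the hypotheses $(s_i w)^{-1}\alpha_i \in \Delta^{+}$ and $u^{-1}\alpha_i \in \Delta^{-}$ are exactly what we have, so the lemma yields
\begin{equation*}
\ell(s_i w \Rightarrow u) \;=\; \ell(s_i(s_i w) \Rightarrow u) + 1 \;=\; \ell(w \Rightarrow u) + 1.
\end{equation*}
Concatenating the Bruhat edge $s_i w \to w$ with any shortest directed path from $w$ to $u$ then produces a directed path of length $1 + \ell(w \Rightarrow u) = \ell(s_i w \Rightarrow u)$, which is therefore itself shortest. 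This exhibits $w$ as an intermediate vertex of a shortest $s_i w \to u$ path, so by Definition~\ref{dfn:tilted} we have $w \le_{s_i w} u$; combined with $w > s_i w$, this contradicts the characterization of $\EQB(s_i w)$ in Lemma~\ref{lem:tec1}, completing the proof.

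The steps are all very short and should not present any real difficulty, the main task being to spot the correct application of the diamond lemma. The only delicate point is the sign verification $(s_i w)^{-1}\alpha_i = -w^{-1}\alpha_i \in \Delta^{+}$, which one must have in order to invoke Lemma~\ref{lem:dia-qb}(1); everything else is a direct assembly of the length identity and the tilted-Bruhat characterization of $\EQB$.
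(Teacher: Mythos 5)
Your proof is correct and follows essentially the same route as the paper: contradiction via the Bruhat edge $s_i w \edge{-w^{-1}\alpha_i} w$, the length identity $\ell(s_i w \Rightarrow u) = \ell(w \Rightarrow u) + 1$ from Lemma~\ref{lem:dia-qb}\,(1), and the resulting relation $w \le_{s_i w} u$ contradicting the characterization of $\EQB(s_i w)$ in Lemma~\ref{lem:tec1}. The sign verification $(s_i w)^{-1}\alpha_i = -w^{-1}\alpha_i \in \Delta^{+}$ that you highlight is exactly the point the paper uses implicitly.
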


\begin{proof}
Suppose, for a contradiction, that 
there exists $u \in \EQB(s_{i}w)$ such that $s_{i}u < u$, or equivalently, 
$u^{-1}\alpha_{i} \in \Delta^{-}$. Since $s_{i}w < w$, 
it follows from Lemma~\ref{lem:dia-qb}\,(1) that 
$\ell(s_{i}w \Rightarrow u) = 
\ell(w \Rightarrow u)+1$. Also, we have a Bruhat edge 
$s_{i}w \edge{-w^{-1}\alpha_{i}} w$, and hence 
$\ell(s_{i}w \Rightarrow w) = 1$. 
Therefore, we obtain $\ell(s_{i}w \Rightarrow w) + \ell(w \Rightarrow u) = 
\ell(s_{i}w \Rightarrow u)$, which implies that $w \le_{s_iw} u$. 
Since $w > s_{i}w$ and $u \in \EQB(s_{i}w)$ by the assumption, 
the equality $w \le_{s_iw} u$ contradicts Lemma~\ref{lem:tec1}.
This proves the lemma. 
\end{proof}

We set
%
%
\begin{equation} \label{eq:fin}
\fin(\kp{w}):=
\bigl\{ u \in W \mid 
 \text{\rm $ut_{\xi} \in \kp{w}$ for some $\xi \in Q^{\vee}$}
\bigr\}. 
\end{equation}
%
%
\begin{rem} \label{rem:fin}
We deduce by Lemma~\ref{lem:NNS} (with $\J=\emptyset$) that
$u \in \fin(\kp{w})$ if and only if $ut_{\wt(w \Rightarrow u)} \in \kp{w}$. 
Indeed, the ``if'' part is obvious from the definition. 
Let us show the ``only if'' part. 
Let $u \in \fin(\kp{w})$. By the definition, 
there exists $\xi \in Q^{\vee}$ such that $ut_{\xi} \in \kp{w}$. 
Since $ut_{\xi} \in \kp{w} \subset (W_{\af})_{\sige w}$, 
we see from Lemma~\ref{lem:NNS} that $\xi \ge \wt(w \Rightarrow u)$, and 
hence $ut_{\xi} \sige ut_{\wt(w \Rightarrow u)}$. 
If $ut_{\wt(w \Rightarrow u)} \in (W_{\af})_{\sige z}$ for some $z \in W$ 
such that $z > w$, then we have $ut_{\xi} \sige 
ut_{\wt(w \Rightarrow u)} \sige z$, which contradicts 
the fact that $ut_{\xi} \in \kp{w}$.
\end{rem}
%
%
\begin{lem} \label{lem:fin}
For every $w \in W$, we have 
$\fin(\kp{w}) = \EQB(w)$. 
\end{lem}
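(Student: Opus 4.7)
The plan is to combine Remark~\ref{rem:fin}, Lemma~\ref{lem:NNS} (with $\J=\emptyset$), and Lemma~\ref{lem:tec1} to reduce the identity $\fin(\kp{w}) = \EQB(w)$ to an equivalence of two conditions on an element $u \in W$, which can then be closed using Proposition~\ref{prop:81}.

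First I would unpack both sides in parallel. By Remark~\ref{rem:fin}, $u \in \fin(\kp{w})$ is equivalent to $ut_{\wt(w \Rightarrow u)} \in \kp{w}$, and specializing Lemma~\ref{lem:NNS} to $\J=\emptyset$ shows that for any $z \in W$, the relation $ut_{\wt(w \Rightarrow u)} \sige z$ holds if and only if $\wt(w \Rightarrow u) \ge \wt(z \Rightarrow u)$ in $Q^{\vee,+}$. Combining these, $u \in \fin(\kp{w})$ iff there is no $z \in W$ with $z > w$ and $\wt(w \Rightarrow u) \ge \wt(z \Rightarrow u)$. Lemma~\ref{lem:tec1} gives the parallel statement on the other side: $u \in \EQB(w)$ iff there is no $z \in W$ with $z > w$ and $z \le_w u$.

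The inclusion $\fin(\kp{w}) \subset \EQB(w)$ is the easier one. If $u \notin \EQB(w)$, a witness $z > w$ with $z \le_w u$ means we can concatenate shortest $\QB$-paths $w \Rightarrow z$ and $z \Rightarrow u$ to obtain a shortest path $w \Rightarrow u$, hence $\wt(w \Rightarrow u) = \wt(w \Rightarrow z) + \wt(z \Rightarrow u) \ge \wt(z \Rightarrow u)$, so $u \notin \fin(\kp{w})$.

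For the reverse inclusion $\EQB(w) \subset \fin(\kp{w})$, which is the main step, I would argue by contradiction. Suppose $z > w$ satisfies $\wt(w \Rightarrow u) \ge \wt(z \Rightarrow u)$. A saturated chain from $w$ to $z$ in the Bruhat order lifts to a directed path in $\QB$ from $w$ to $z$ consisting entirely of Bruhat edges (these are exactly the length-increasing reflections), so this path contributes $0$ to the weight; concatenating it with a shortest path $z \Rightarrow u$ yields a directed path $w \to u$ of weight $\wt(z \Rightarrow u)$. Proposition~\ref{prop:81} then forces $\wt(z \Rightarrow u) \ge \wt(w \Rightarrow u)$, so the hypothesis turns this into an equality, and the second assertion of Proposition~\ref{prop:81} upgrades the concatenated path to a shortest one. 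This gives $\ell(w \Rightarrow u) = \ell(w \Rightarrow z) + \ell(z \Rightarrow u)$, i.e., $z \le_w u$, which by Lemma~\ref{lem:tec1} contradicts $u \in \EQB(w)$. The key point to verify carefully is the lifting of a Bruhat chain to a $\QB$-path of Bruhat edges; this reduces to checking that any Bruhat cover is a single reflection of length-difference one, which is standard.
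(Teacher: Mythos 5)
Your proof is correct and follows essentially the same route as the paper: unpack $\fin(\kp{w})$ via Remark~\ref{rem:fin} and Lemma~\ref{lem:NNS}, use Lemma~\ref{lem:tec1} for $\EQB(w)$, and in the hard inclusion concatenate a weight-zero (Bruhat-edge) path from $w$ to $z$ with a shortest path $z\Rightarrow u$ and invoke Proposition~\ref{prop:81} to force $z\le_{w}u$. The only cosmetic difference is that you lift a saturated Bruhat chain from $w$ to $z$, while the paper takes a shortest path in $\QB$ and observes it consists of Bruhat edges; both give the same contradiction.
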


\begin{proof}
First we prove that $\fin(\kp{w}) \subset \EQB(w)$. 
Let $u \in \fin(\kp{w})$
(note that $ut_{\wt(w \Rightarrow u)} \in \kp{w}$ by Remark~\ref{rem:fin}), 
and suppose, for a contradiction, 
that $u \notin \EQB(w)$. By Lemma~\ref{lem:tec1}, 
there exists $z \in W$ such that $z > w$ and $z \le_{w} u$. 
Since $z \le_{w} u$, we see that $\wt(z \Rightarrow u) \le \wt(w \Rightarrow u)$, 
which implies that $ut_{\wt(w \Rightarrow u)} \sige ut_{\wt(z \Rightarrow u)} \sige z$ 
by Lemma~\ref{lem:NNS} (with $\J=\emptyset$). 
This contradicts the fact that $ut_{\wt(w \Rightarrow u)} \in \kp{w}$. 

Next we prove that $\fin(\kp{w}) \supset \EQB(w)$. 
Let $u \in \EQB(w)$. It suffices to show that 
$ut_{\wt(w \Rightarrow u)} \in \kp{w}$. 
It is obvious from Lemma~\ref{lem:NNS} that 
$ut_{\wt (w \Rightarrow u)} \in (W_{\af})_{\sige w}$. 
Suppose, for a contradiction, that $ut_{\wt (w \Rightarrow u)} \sige z$ 
for some $z \in W$ such that $z > w$; it follows from Lemma~\ref{lem:NNS} that 
$\wt(z \Rightarrow u) \le \wt(w \Rightarrow u)$. 
Take arbitrary shortest directed paths
\begin{equation*}
w = x_{0} \edge{\gamma_{1}} \cdots \edge{\gamma_{a}} x_{a} = z, \qquad
z = x_{a} \edge{\gamma_{a+1}} \cdots \edge{\gamma_{b}} x_{b} = u
\end{equation*}
in $\QB$, and concatenate these as:
\begin{equation*}
\bp : 
w = x_{0} \edge{\gamma_{1}} \cdots \edge{\gamma_{a}} x_{a} = z 
\edge{\gamma_{a+1}} \cdots \edge{\gamma_{b}} x_{b} = u.
\end{equation*}
Since $z > w$ in the (ordinary) Bruhat order on $W$, we deduce that 
all the edges in the shortest directed path above from $w$ to $z$ are Bruhat edges, 
which implies that $\wt(\bp) = \wt (z \Rightarrow u)$. 
Also, it follows from Proposition~\ref{prop:81} 
(with $\J=\emptyset$) that $\wt(\bp) \ge \wt(w \Rightarrow u)$. 
Therefore, we obtain 
\begin{equation*}
\wt(z \Rightarrow u) \le \wt(w \Rightarrow u) \le 
\wt(\bp) = \wt (z \Rightarrow u), 
\end{equation*}
and hence $\wt(w \Rightarrow u) = \wt (\bp)$. 
In particular, we deduce from Proposition~\ref{prop:81}
that $\bp$ is a shortest directed path from $w$ to $u$. 
Hence we obtain $z \le_{w} u$.
Since $u \in \EQB(w)$ and $z > w$ by our assumption, 
the inequality $z \le_{w} u$ contradicts Lemma~\ref{lem:tec1}. 
Thus we have shown that $u \in \fin(\kp{w})$. 
This proves the lemma. 
\end{proof}

\begin{proof}[Proof of Proposition~\ref{prop:kappa}]
For $w \in W$, we set
\begin{equation}
\kp{w}':=
\bigl\{ u t_{\xi} \in W_{\af} \mid 
 u \in \EQB(w),\,\xi \in \wt(w \Rightarrow u) + Q_{I_{w}}^{\vee,+} \bigr\}.
\end{equation}
We prove the equality $\kp{w}=\kp{w}'$ 
by descending induction on $\ell(w)$. 
If $w = \lng$, then the equality follows from Lemma~\ref{lem:w0}. 
Let $w \in W$ and $i \in I$ be such that $s_{i}w < w$. 
Assume that $\kp{w}=\kp{w}'$ (the induction hypothesis). 
We will prove that $\kp{s_{i}w}=\kp{s_{i}w}'$. 

\paragraph{Step 1.}
%
We prove the inclusion $\kp{s_{i}w} \subset \kp{s_{i}w}'$. 
Let $x \in \kp{s_{i}w}$, and write it as 
$x = ut_{\xi} \in \kp{s_{i}w}$ for some $u \in W$ and $\xi \in Q^{\vee}$. 
We see from Lemma~\ref{lem:fin} that $u \in \EQB(s_{i}w)$. 
Hence it remains to show that $\xi \in \wt(s_{i}w \Rightarrow u) + Q^{\vee,+}_{I_{s_iw}}$. 
Since $u \in \EQB(s_{i}w)$ and $s_{i}w < w$, it follows from Lemma~\ref{lem:tec2} that 
$u^{-1}\alpha_{i} \in \Delta^{+}$. 
Therefore, from the assumption that $x = ut_{\xi} \sige s_{i}w$, 
we see by Lemma~\ref{lem:dia-si}\,(3) that
$s_{i}ut_{\xi} \sige w$, and hence 
$s_{i}ut_{\xi} \in (W_{\af})_{\sige w}$. 
Suppose, for a contradiction, that $s_{i}ut_{\xi} \sige z$ for some 
$z \in W$ such that $z > w$. 
If $z^{-1}\alpha_{i} \in \Delta^{+}$, then 
we see by Lemma~\ref{lem:dia-si}\,(1) that $ut_{\xi} \sige z$. 
Hence we have $ut_{\xi} \sige z > w > s_{i}w$, 
which contradicts the assumption that $ut_{\xi} \in \kp{s_{i}w}$. 
Similarly, 
if $z^{-1}\alpha_{i} \in \Delta^{-}$, then 
we see by Lemma~\ref{lem:dia-si}\,(3) that $ut_{\xi} \sige s_{i}z$. 
Hence we have $ut_{\xi} \sige s_{i}z > s_{i}w$ (by Lemma~\ref{lem:dia-b}\,(3)), 
which contradicts the assumption that $ut_{\xi} \in \kp{s_{i}w}$. 
Thus we conclude that $s_{i}ut_{\xi} \in \kp{w}$. 
From this, by the induction hypothesis, we obtain 
$\xi \in \wt(w \Rightarrow s_{i}u) + Q_{I_{w}}^{\vee,+}$; 
note that $\wt(w \Rightarrow s_{i}u) = \wt(s_{i}w \Rightarrow u)$ 
by Lemma~\ref{lem:dia-qb}\,(2). 
%
\paragraph{Case 1.1}
%
Assume that $s_{i}w \notin wW_{I_{w}}$. 
In this case, we have $I_{s_{i}w} = I_{w}$ 
by Lemma~\ref{lem:311}\,(a), and hence 
$\xi \in \wt(s_{i}w \Rightarrow u) + Q_{I_{s_{i}w}}^{\vee,+}$, as desired. 
%
\paragraph{Case 1.2}
%
Assume that $s_{i}w \in wW_{I_{w}}$; by Lemma~\ref{lem:311}\,(b), 
$I_{s_{i}w} = I_{w} \setminus \{k\}$, where $\alpha_{k}=-w^{-1}\alpha_{i}$. 
Suppose that $\xi \in \wt(s_{i}w \Rightarrow u) + 
( Q_{I_{w}}^{\vee,+} \setminus Q_{I_{s_{i}w}}^{\vee,+} )$, namely, that 
the coefficient of $\alpha_{k}^{\vee}$ in 
$\xi - \wt(s_{i}w \Rightarrow u)$ is greater than $0$. 
Since $u \in \EQB(s_{i}w)$, we deduce from Proposition~\ref{prop:323}\,(2) that 
$\wt ( w \Rightarrow u ) = \wt ( s_{i}w \Rightarrow u ) + \alpha_{k}$. 
Hence we see that $\xi - \wt (w \Rightarrow u ) \in Q^{\vee,+}$, 
which implies that $u t_{\xi} \sige w$ by Lemma~\ref{lem:NNS}. 
Since $w > s_{i}w$ by the assumption, 
the inequality $u t_{\xi} \sige w$ contradicts 
the assumption that $x = ut_{\xi} \in \kp{s_{i}w}$. Thus, we have shown that 
$\xi \in \wt(s_{i}w \Rightarrow u) + Q_{I_{s_{i}w}}^{\vee,+}$, as desired.  

\paragraph{Step 2.}
%
We prove the opposite inclusion 
$\kp{s_{i}w} \supset \kp{s_{i}w}'$. 
Let $x=ut_{\xi} \in \kp{s_{i}w}'$, 
where $u \in \EQB(s_{i}w)$ and 
$\xi \in \wt(s_{i}w \Rightarrow u) + Q_{I_{s_iw}}^{\vee,+}$; 
note that $s_{i}u > u$, or equivalently, $u^{-1}\alpha_{i} \in \Delta^{+}$ 
by Lemma~\ref{lem:tec2}. Hence it follows from Lemma~\ref{lem:dia-qb}\,(2) that 
$\wt(s_{i}w \Rightarrow u) = \wt(w \Rightarrow s_{i}u)$. 
Here, by Lemma~\ref{lem:311}, we have $I_{s_iw} \subset I_{w}$, 
and hence $Q_{I_{s_iw}}^{\vee,+} \subset Q_{I_{w}}^{\vee,+}$. 
Therefore, we have $\xi \in \wt(w \Rightarrow s_{i}u) + Q_{I_{w}}^{\vee,+}$. 
Also, we see from Proposition~\ref{prop:323}\,(2) that $s_{i}u \in \EQB(w)$. 
Hence we conclude that 
$s_{i}ut_{\xi} \in \kp{w}$ by the induction hypothesis; namely, 
we have $s_{i}ut_{\xi} \sige w$ and $s_{i}ut_{\xi} \not\sige z$ 
for any $z \in W$ such that $z > w$. 
Since $s_{i}u > u$ and $s_{i}w < w$, 
it follows from Lemma~\ref{lem:dia-si}\,(3) that 
$ut_{\xi} \sige s_{i}w$, i.e., 
$ut_{\xi} \in (W_{\af})_{\sige s_{i}w}$. 
Suppose, for a contradiction, that 
$ut_{\xi} \in (W_{\af})_{\sige z}$ for some $z \in W$ 
such that $z > s_{i}w$. 
Assume first that $s_{i}z > z$, or equivalently, 
$z^{-1}\alpha_{i} \in \Delta^{+}$. 
We see by Lemmas~\ref{lem:dia-si}\,(3) and \ref{lem:dia-b}\,(3) that
$s_{i}ut_{\xi} \sige s_{i}z > w$, which contradicts 
the fact that $s_{i}ut_{\xi} \in \kp{w}$. 
Assume next that $s_{i}z < z$, or equivalently, 
$z^{-1}\alpha_{i} \in \Delta^{-}$. 
We see by Lemma~\ref{lem:dia-b}\,(1) that $z > w$. 
Also, since $s_{i}u > u$, 
we have $s_{i}ut_{\xi} \sige ut_{\xi}$ (by the definition of $\sige$). 
Combining these inequalities, we obtain 
$s_{i}ut_{\xi} \sige ut_{\xi} \sige z > w$, which contradicts 
the fact that $s_{i}ut_{\xi} \in \kp{w}$. 
Thus we conclude that 
$ut_{\xi} \in \kp{s_{i}w}$. 

This completes the proof of Proposition~\ref{prop:kappa}. 
\end{proof}
%
%
\begin{prop} \label{prop:kappa2}
Let $\J$ be a subset of $I$, and let $w \in \WJu${\rm;}
recall from Remark~{\rm \ref{rem:Iw1}} that $\J \subset \Iw$. 
Then, the subset 
\begin{equation*}
\kq{w} = ( (\WJu)_{\af} )_{\sige w} \setminus 
\bigcup_{z \in \WJu,\,z > w} ( (\WJu)_{\af} )_{\sige z}, 
\end{equation*}
defined in \eqref{eq:kw}, is identical to the set
%
%
\begin{equation} \label{eq:k2-1}
\PJ(\kw) = 
\bigl\{ u \PJ(t_{\xi}) \in (\WJu)_{\af} \mid u \in \mcr{\EQB(\xw)}, 
\xi \in \wt^{\J}(w \Rightarrow u) + Q^{\vee,+}_{\Iw \setminus \J}\bigr\}. 
\end{equation}
\end{prop}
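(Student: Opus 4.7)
The plan is to reduce Proposition~\ref{prop:kappa2} to Proposition~\ref{prop:kappa} via the projection $\PJ : W_{\af} \twoheadrightarrow (\WJu)_{\af}$. I would prove the two equalities in \eqref{eq:k2-1} in turn: first $\PJ(\kw)$ equals the explicit set on the right, by direct calculation from Proposition~\ref{prop:kappa}; then $\kq{w} = \PJ(\kw)$, by checking both inclusions using properties of the projection together with the parabolic semi-infinite Bruhat data.

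For the second equality, I would apply Proposition~\ref{prop:kappa} to $\xw \in W$ to describe $\kw = \{ u' t_{\xi'} \mid u' \in \EQB(\xw),\,\xi' \in \wt(\xw \Rightarrow u') + Q^{\vee,+}_{\Iw} \}$, and then apply $\PJ$ using $\PJ(u' t_{\xi'}) = \mcr{u'}\, \PJ(t_{\xi'})$ (Lemma~\ref{lem:PiJ}(1)) together with $\PJ(\xw) = w$ (which follows from $\xw = w \lng(\J)$, $w \in \WJu$, and $\lng(\J) \in \WJ$). Grouping by $u = \mcr{u'} \in \mcr{\EQB(\xw)}$ and invoking Lemma~\ref{lem:wtS} to identify $[\wt(\xw \Rightarrow u')]^{\J} = \wt^{\J}(w \Rightarrow u)$, the computation reduces to understanding the image of $\wt(\xw \Rightarrow u') + Q^{\vee,+}_{\Iw}$ under $\xi' \mapsto \PJ(t_{\xi'})$. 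The crucial point is that $\J \subset \Iw$ (since $\lng(\J)\alpha_i \in -\Delta_{\J}^{+}$ for $i \in \J$ yields $\xw \alpha_i \in -\Delta^{+}$), so that by Lemma~\ref{lem:PiJ}(3) the $\J$-component of $\xi'$ may be altered freely without changing $\PJ(t_{\xi'})$; hence $[Q^{\vee,+}_{\Iw}]^{\J} = Q^{\vee,+}_{\Iw \setminus \J}$, which gives the claimed right-hand side of \eqref{eq:k2-1}.

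For the first equality I would check both inclusions. For $\PJ(\kw) \subset \kq{w}$: given $x \in \kw$, Lemma~\ref{lem:INS} combined with $\PJ(\xw) = w$ gives $\PJ(x) \sige w$; if in addition $\PJ(x) \sige z$ for some $z \in \WJu$ with $z > w$, then writing $x = u' t_{\xi'}$ and combining Lemmas~\ref{lem:NNS} and \ref{lem:wtS} with the constraint $\xi' - \wt(\xw \Rightarrow u') \in Q^{\vee,+}_{\Iw}$ (which in particular controls the $\J$-component of $\xi'$, since $\J \subset \Iw$) should let me derive $x \sige z^{*}$ in $W_{\af}$ for some $z^{*} > \xw$ in $W$ (e.g.\ $z^{*} = \xcr{z}$), contradicting $x \in \kw$. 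For the converse $\kq{w} \subset \PJ(\kw)$: given $y = u \PJ(t_{\xi}) \in \kq{w}$, I would verify that $y$ satisfies the right-hand side description established in the second paragraph; the containment $u \in \mcr{\EQB(\xw)}$ is a parabolic analog of Lemma~\ref{lem:tec1}, while the refinement $[\xi]^{\J} - \wt^{\J}(w \Rightarrow u) \in Q^{\vee,+}_{\Iw \setminus \J}$ is shown by contradiction: if the $\alpha_k^{\vee}$-coefficient were strictly positive for some $k \in (I \setminus \J) \setminus \Iw$, Lemma~\ref{lem:dia-si} applied in the parabolic quantum Bruhat graph $\QBJ$ would produce $z \in \WJu$ with $z > w$ and $y \sige z$, violating $y \in \kq{w}$. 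The hardest step will be this last construction, since the naive candidate $z = w s_k$ can fail when $k$ happens to lie in $I_{w}$ (which is possible even though $k \notin \Iw$, as $I_w$ and $\Iw$ need not be comparable); a more delicate argument, relating the descent sets of $w$ and of $\xw = w \lng(\J)$ through the structure of $\QBJ$ and supported by the Bruhat lifting lemma (Lemma~\ref{lem:dia-b}), is then required.
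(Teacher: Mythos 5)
Your reduction of the first equality in \eqref{eq:k2-1} (the identification of $\PJ(\kw)$ with the explicit set, via Proposition~\ref{prop:kappa}, Lemma~\ref{lem:wtS}, Lemma~\ref{lem:PiJ} and the observation $[\Qvw]^{\J}=Q^{\vee,+}_{\Iw\setminus\J}$) is correct and is exactly what the paper's remark asserts. The substance of the proposition, however, is the equality $\kq{w}=\PJ(\kw)$, and both of your arguments for it have genuine gaps. For $\PJ(\kw)\subset\kq{w}$ you plan to deduce from $\PJ(x)\sige z$ (with $z\in\WJu$, $z>w$) that $x\sige z^{*}$ for some $z^{*}>\xw$, e.g.\ $z^{*}=\xcr{z}$. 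This does not follow: writing $x=u't_{\xi'}$, Lemma~\ref{lem:NNS} applied to $\PJ(x)\sige z$ only yields $[\xi']^{\J}\ge\wt^{\J}(z\Rightarrow\mcr{u'})$, i.e.\ control of the $\Jc$-components, whereas $x\sige\xcr{z}$ requires the full inequality $\xi'\ge\wt(\xcr{z}\Rightarrow u')$; and the constraint $\xi'-\wt(\xw\Rightarrow u')\in\Qvw$ bounds the $\J$-components of $\xi'$ only against $\wt(\xw\Rightarrow u')$, which need not dominate $\wt(\xcr{z}\Rightarrow u')$ in those components. The paper's proof never concludes $x\sige z^{*}$; instead it manufactures a \emph{different} element of $\kw$: it passes to the representative $u_{1}\in u'\WJ$ appearing in $\PJ(x)$ --- which still lies in $\EQB(\xw)$ because $\EQB(\xw)$ is a union of cosets of $W_{\Iw}$, hence of $\WJ$, by \cite[Lemma~29]{NNS2}, an ingredient absent from your proposal --- and adds a suitable $\zeta_{1}\in\QJvp$ to the translation part so that $y:=u_{1}t_{\zeta+\zeta_{1}}\in\kw$ while $y\sige\xcr{z}>\xw$; the contradiction is with $y\in\kw$, not with $x\in\kw$.

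For the converse inclusion $\kq{w}\subset\PJ(\kw)$ you propose to verify the explicit description of $\kq{w}$ directly, resting on an unproved ``parabolic analog of Lemma~\ref{lem:tec1}'' and on a construction you yourself flag as unresolved (the failure of the candidate $z=ws_{k}$); as written this amounts to redoing the descending induction behind Proposition~\ref{prop:kappa} in the parabolic setting, and none of the needed statements are supplied. The missing idea is the lifting identity $\PJ\bigl((W_{\af})_{\sige u_{1}}\bigr)=((\WJu)_{\af})_{\sige u}$ for $u\in\WJu$ and $u_{1}\in u\WJ$, proved by adjusting the $\J$-component of the translation part via Lemmas~\ref{lem:NNS}, \ref{lem:wtS} and \ref{lem:PiJ}; once this is available, $\kq{w}\subset\PJ(\kw)$ follows by a short set-theoretic argument from Lemma~\ref{lem:INS} and the order-preservation of $v\mapsto\mcr{v}$: any lift $x\in(W_{\af})_{\sige\xw}$ of $y\in\kq{w}$ automatically lies in $\kw$, since $x\sige z'$ with $z'>\xw$ would force $y\sige\mcr{z'}$ with $\mcr{z'}>w$ (note $z'>\xw$ cannot lie in $w\WJ$). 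So your overall strategy (reduce to Proposition~\ref{prop:kappa} through $\PJ$) is the right one, but the two key mechanisms --- the auxiliary element $u_{1}t_{\zeta+\zeta_{1}}$ together with the $\WJ$-coset invariance of $\EQB(\xw)$, and the lifting identity for semi-infinite Bruhat intervals --- are missing, and without them the proof does not go through.
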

\begin{rem}
The equality in \eqref{eq:k2-1} follows from 
Proposition~\ref{prop:kappa}, Lemma~\ref{lem:wtS}, 
and Lemma~\ref{lem:PiJ}\,(1), (3). 
\end{rem}

\begin{rem} \label{rem:kp2}
Keep the setting of Proposition~\ref{prop:kappa2}. 
We see by Lemma~\ref{lem:PiJ}\,(3) that the map 
\begin{equation*}
\mcr{\EQB(\xcr{w})} \times Q^{\vee,+}_{\Iw \setminus \J} \rightarrow \kq{w}, \qquad
(u,\gamma) \mapsto u \PJ(t_{\wt^{\J}(w \Rightarrow u)+\gamma}),
\end{equation*}
is bijective. 
\end{rem}

\begin{proof}[Proof of Proposition~\ref{prop:kappa2}]
First, we claim that for arbitrary $u \in \WJu$ and $u_{1} \in u\WJ$, 
%
%
\begin{equation} \label{eq:k2}
\PJ((W_{\af})_{\sige u_{1}}) = ( (\WJu)_{\af} )_{\sige u}.
\end{equation}
The inclusion $\subset$ follows from Lemma~\ref{lem:INS} and \eqref{eq:PiJ2}. 
Let us prove the opposite inclusion $\supset$. 
Let $x \in ( (\WJu)_{\af} )_{\sige u}$, and write it as 
$x = v \PJ(t_{\xi})$ for some $v \in \WJu$ and $\xi \in Q^{\vee}$.
Since $x \sige u$ by the assumption, we see from Lemma~\ref{lem:NNS} that 
$[\xi]^{\J} \ge \wt^{\J}(u \Rightarrow v)$; note that 
$\wt^{\J}(u \Rightarrow v) = [\wt(u_{1} \Rightarrow v)]^{\J}$
by Lemma~\ref{lem:wtS}. 
Hence we can take $\zeta \in Q^{\vee}$ such that $[\zeta]^{\J}=[\xi]^{\J}$ and 
$\zeta \ge \wt(u_{1} \Rightarrow v)$. We set $y:=vt_{\zeta} \in W_{\af}$. 
Then it follows from Lemma~\ref{lem:NNS} (with $\J=\emptyset$) that $y \sige u_{1}$. 
Also, we have $\PJ(y) = \PJ(v)\PJ(t_{\zeta})=v\PJ(t_{\xi}) = x$ 
by Lemma~\ref{lem:PiJ}\,(1) and (3). 
This proves the opposite inclusion $\supset$, and hence \eqref{eq:k2}. 

From \eqref{eq:k2} and the definitions of $\kw$ and $\kq{w}$ 
(by a standard set-theoretical argument), we see easily 
that $\PJ(\kw) \supset \kq{w}$. 
Let us prove the opposite inclusion $\subset$. 
Let $x \in \kw$; by \eqref{eq:k2}, we have 
$\PJ(x) \in ( (\WJu)_{\af} )_{\sige w}$. 
Suppose, for a contradiction, that 
$\PJ(x) \in ( (\WJu)_{\af} )_{\sige z}$ for some 
$z \in \WJu$ such that $z > w$. 
By Proposition~\ref{prop:kappa}, 
we can write the $x$ as $x = ut_{\xi}$ 
for some $u \in \EQB(\xcr{w})$ and $\xi \in 
\wt(\xcr{w} \Rightarrow u)+\Qvw$. 
Hence we have $\PJ(x) = \mcr{u} \PJ(t_{\xi}) = 
u_{1}t_{\xi+\xi_{1}}$ for some $u_{1} \in u\WJ$ and 
$\xi_{1} \in \QJ^{\vee}$ by Lemma~\ref{lem:PiJ}\,(1) and (2). 
Here, by \cite[Lemma~29]{NNS2}, 
the subset $\EQB(\xcr{w})$ of $W$ is a disjoint union of 
some cosets in $W/W_{\Iw}$; since $\J \subset \Iw$, 
the subset $\EQB(\xcr{w})$ of $W$ is also a disjoint union of some cosets in $W/\WJ$. 
Hence it follows that $u_{1}$ is contained in $\EQB(\xcr{w})$. 
Also, since $\PJ(x) \sige z$ by our assumption, 
it follows from Lemma~\ref{lem:NNS} that 
$[\xi]^{\J} \ge \wt^{\J}(z \Rightarrow \mcr{u})$; 
by Lemma~\ref{lem:wtS}, we have 
$\wt^{\J}(z \Rightarrow \mcr{u}) = [\wt(\xcr{z} \Rightarrow u_{1})]^{\J}$, 
and hence $[\xi]^{\J} \ge [\wt(\xcr{z} \Rightarrow u_{1})]^{\J}$. 
We set
\begin{equation*}
\zeta:=\wt(\xcr{w} \Rightarrow u_{1}) + 
\underbrace{\xi - \wt(\xcr{w} \Rightarrow u)}_{\in \Qvw}
= \xi +
\underbrace{\wt(\xcr{w} \Rightarrow u_{1})
- \wt(\xcr{w} \Rightarrow u)}_{\text{$\in \QJv$ by Lemma~\ref{lem:wtS}}}. 
\end{equation*}
Since $[\xi]^{\J} \ge [\wt(\xcr{z} \Rightarrow u_{1})]^{\J}$, as seen above, 
we can take $\zeta_{1} \in \QJvp$ such that 
$\zeta + \zeta_{1} \ge \wt(\xcr{z} \Rightarrow u_{1})$; 
since $\J \subset \Iw$, we have 
$\zeta + \zeta_{1} \in \wt(\xcr{w} \Rightarrow u_{1}) + \Qvw$. 
Hence it follows from Proposition~\ref{prop:kappa} 
that $y:=u_{1}t_{\zeta+\zeta_{1}} \in \kw$. 
However, since $\zeta + \zeta_{1} \ge \wt(\xcr{z} \Rightarrow u_{1})$, 
we deduce by Lemma~\ref{lem:NNS} (with $S = \emptyset$) that 
$y = u_{1}t_{\zeta+\zeta_{1}} \sige \xcr{z} > \xcr{w}$, 
which contradicts the fact that $y \in \kw$. 
This proves the opposite inclusion $\PJ(\kw) \subset \kq{w}$, 
and hence completes the proof of Proposition~\ref{prop:kappa2}. 
\end{proof}
%
%
\section{Proof of Theorem~\ref{thm:main}.}
\label{sec:prf-main}
%
%
\subsection{Quantum Lakshmibai-Seshadri paths and the degree function.}
\label{subsec:QLS}
We fix $\lambda \in P^{+}$, and take $\J=\J_{\lambda}$ as in \eqref{eq:J}. 
%
%
\begin{dfn} \label{dfn:QBa}
For a rational number $0 < a < 1$, 
we define $\QBa$ to be the subgraph of $\QBJ$ 
with the same vertex set but having only those edges of the form
$u \edge{\beta} v$ for which 
$a\pair{\lambda}{\beta^{\vee}} \in \BZ$ holds.
\end{dfn}
%
%
\begin{dfn}\label{dfn:QLS}
A quantum LS path of shape $\lambda $ is a pair 
%
%
\begin{equation} \label{eq:QLS}
\eta = (\bw \,;\, \ba) 
     = (w_{1},\,\dots,\,w_{s} \,;\, a_{0},\,a_{1},\,\dots,\,a_{s}), \quad s \ge 1, 
\end{equation}
of a sequence $\bw : w_{1},\,\dots,\,w_{s}$ 
of elements in $\WJu$ with $w_{u} \ne w_{u+1}$ 
for any $1 \le u \le s-1$ and an increasing sequence 
$\ba : 0 = a_0 < a_1 < \cdots  < a_s =1$ of rational numbers 
satisfying the condition that there exists a directed path 
from $w_{u+1}$ to  $w_{u}$ in $\QBb{a_{u}}$ 
for each $u = 1,\,2,\,\dots,\,s-1$. 
\end{dfn}

We denote by $\QLS(\lambda)$ 
the set of all quantum LS paths of shape $\lambda$.
If $\eta \in \QLS(\lambda)$ is of the form \eqref{eq:QLS}, 
then we set $\kappa(\eta):=w_{s} \in \WJu$, 
and call it the final direction of $\eta$. 
For $w \in W$, we set (see \cite[Sect.~3.2]{NNS1} and \cite[Sect.~2.3]{NNS2})
%
%
\begin{equation} \label{eq:QLSw}
\QLS^{w\lambda,\infty}(\lambda) := 
 \bigl\{
   \eta \in \QLS(\lambda) \mid \kappa(\eta) \in \mcr{\EQB{\xw}}
 \bigr\}. 
\end{equation}

We define a projection $\cl : (\WJu)_{\af} \twoheadrightarrow \WJu$ by
$\cl (x) := w$ for $x \in (\WJu)_{\af}$ of the form 
$x = w\PJ(t_{\xi})$ with $w \in \WJu$ and $\xi \in Q^{\vee}$.
For $\pi = (x_{1},\,\dots,\,x_{s}\,;\,a_{0},\,a_{1},\,\dots,\,a_{s}) 
\in \SLS(\lambda)$, we define 
\begin{equation*}
\cl(\pi):=(\cl(x_{1}),\,\dots,\,\cl(x_{s})\,;\,a_{0},\,a_{1},\,\dots,\,a_{s});
\end{equation*}
here, for each $1 \le p < q \le s$ such that $\cl(x_{p})= \cdots = \cl(x_{q})$, 
we drop $\cl(x_{p}),\,\dots,\,\cl(x_{q-1})$ and $a_{p},\,\dots,\,a_{q-1}$. 
We know from \cite[Sect.~6.2]{NS16} that $\cl(\pi) \in \QLS(\lambda)$ 
for all $\pi \in \SLS(\lambda)$, and that 
the map $\cl:\SLS(\lambda) \rightarrow \QLS(\lambda)$, 
$\pi \mapsto \cl(\pi)$, is surjective. 
We also know the following lemma from \cite[Lemma~6.2.3]{NS16}; 
recall that $\SLS_{0}(\lambda)$ denotes the connected component of $\SLS(\lambda)$ 
containing $\pi_{\lambda}=(e\,;\,0,1)$. 
%
%
\begin{lem} \label{lem:deg}
For each $\eta \in \QLS(\lambda)$, there exists a unique 
$\pi_{\eta} \in \SLS_{0}(\lambda)$ such that 
$\cl(\pi_{\eta})=\eta$ and $\kappa(\pi_{\eta}) = \kappa(\eta) \in \WJu$. 
\end{lem}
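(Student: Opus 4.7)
The plan is to construct $\pi_{\eta}$ by backward recursion starting from the final direction, and then verify both its membership in $\SLS_{0}(\lambda)$ and its uniqueness there. Write $\eta = (w_{1},\,\dots,\,w_{s};\,a_{0},\,\dots,\,a_{s}) \in \QLS(\lambda)$ and set $x_{s} := w_{s}$, viewed as an element of $(\WJu)_{\af}$ via the inclusion $\WJu \hookrightarrow (\WJu)_{\af}$.

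For each $u = s-1,\,s-2,\,\dots,\,1$, the defining condition of $\QLS(\lambda)$ provides a directed path in $\QBb{a_{u}}$ from $w_{u+1}$ to $w_{u}$. I would lift this path edge by edge to a directed path in $\SBb{a_{u}}$ beginning at $x_{u+1}$, using the standard correspondence between edges of the parabolic quantum Bruhat graph and those of the (parabolic) semi-infinite Bruhat graph: a Bruhat edge $v \edge{\beta} v'$ in $\QBJ$ lifts to an edge in $\SBJ$ with no translation shift, whereas a quantum edge lifts with a translation shift by $-\beta^{\vee}$. The integrality condition $a_{u}\pair{\lambda}{\beta^{\vee}} \in \BZ$ that places the quantum-Bruhat edge in $\QBb{a_{u}}$ is exactly the one that places the lifted edge in $\SBb{a_{u}}$. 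Define $x_{u}$ to be the endpoint of the lifted path; then $\cl(x_{u}) = w_{u}$, and $\pi_{\eta} := (x_{1},\,\dots,\,x_{s};\,a_{0},\,\dots,\,a_{s})$ is a semi-infinite LS path of shape $\lambda$ with $\kappa(\pi_{\eta}) = w_{s} = \kappa(\eta)$.

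To place $\pi_{\eta}$ in $\SLS_{0}(\lambda)$, I would use Proposition~\ref{prop:SLS}(3), which gives a crystal isomorphism $\SLS(\lambda) \cong \Par(\lambda) \otimes \SLS_{0}(\lambda)$; writing $\pi_{\eta} = \brho \otimes \pi'$, the maps $\cl$ and $\kappa$ are insensitive to the $\brho$-factor, so it suffices to show $\brho = (\emptyset)_{i \in I}$. Computing the $\delta$-coefficient of $\wt(\pi_{\eta})$ from the explicit construction and comparing with the weight formula $\wt(\pi^{C}) = \lambda - |\Theta(C)|\delta$ of Proposition~\ref{prop:SLS}(2), one sees that the minimal-translation choice at each step maximizes the $\delta$-coefficient of $\wt(\pi_{\eta})$ among all lifts of $\eta$ with final direction $w_{s}$, and this maximum is attained precisely when $\brho = (\emptyset)_{i \in I}$. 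For uniqueness, suppose $\pi' = (x'_{1},\,\dots,\,x'_{s};\,a_{0},\,\dots,\,a_{s}) \in \SLS_{0}(\lambda)$ also satisfies $\cl(\pi') = \eta$ and $\kappa(\pi') = w_{s}$. Then $x'_{s} = w_{s} = x_{s}$, and a backward induction on $u$ shows $x'_{u} = x_{u}$: any alternative choice would be of the form $x'_{u} = x_{u}\PJ(t_{\zeta})$ for some nonzero $\zeta \in Q^{\vee,+}$, producing an extra positive $\delta$-shift that would displace $\pi'$ into a connected component of $\SLS(\lambda)$ other than $\SLS_{0}(\lambda)$, a contradiction.

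The main obstacle is the second step: identifying which specific lift produced by the backward recursion lies in $\SLS_{0}(\lambda)$ rather than in some other connected component. This requires careful bookkeeping of the $\delta$-coefficient of $\wt(\pi_{\eta})$ through the recursion, together with a precise matching of the minimal-translation lift with the trivial element of $\Par(\lambda)$ under the isomorphism of Proposition~\ref{prop:SLS}(3); the correspondence between edges of $\QBJ$ and $\SBJ$ used in the first step, although standard, must also be stated and verified in a parabolic form that respects the projection $\PJ$.
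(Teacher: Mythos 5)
Your statement is one the paper does not prove at all: it is quoted verbatim from \cite[Lemma~6.2.3]{NS16}, so there is no in-paper argument to compare with, and your proposal has to stand on its own. Its first step is fine and standard: lifting a directed path in $\QBb{a_{u}}$ edge by edge to $\SBb{a_{u}}$ starting from $x_{u+1}$ does produce some $\pi \in \SLS(\lambda)$ with $\cl(\pi)=\eta$ and $\kappa(\pi)=\kappa(\eta)$ (apart from a sign slip: traversing a quantum edge labelled $\beta$ in this direction shifts the translation part by $+\beta^{\vee}$, not $-\beta^{\vee}$). Note, however, that the resulting lift depends on the chosen paths in $\QBb{a_{u}}$, which need not be shortest in $\QBJ$, so ``the minimal-translation choice'' is not yet a well-defined object; making it canonical already requires an argument in the spirit of Proposition~\ref{prop:81} adapted to the subgraphs $\QBb{a_{u}}$.

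The genuine gap is exactly where you place the ``main obstacle'': identifying the component and proving uniqueness. Your criterion is that the lift with maximal $\delta$-coefficient of $\wt(\pi_{\eta})$ is the one in $\SLS_{0}(\lambda)$, by comparison with $\wt(\pi^{C})=\lambda-|\Theta(C)|\delta$. But $\pi_{\eta}$ is in general not the distinguished element $\pi^{C}$ of its component, and Proposition~\ref{prop:SLS}\,(2) says nothing about the $\delta$-coefficient of an arbitrary element of $C$; elements of $\SLS_{0}(\lambda)$ itself have arbitrarily negative $\delta$-coefficients, so the $\delta$-coefficient of the weight does not detect the component. To convert ``maximal $\delta$-coefficient among lifts of $\eta$ with final direction $\kappa(\eta)$'' into ``lies in $\SLS_{0}(\lambda)$'' you would need to know that each connected component contains exactly one such lift, with weight differing from the $\SLS_{0}(\lambda)$-lift by $-|\Theta(C)|\delta$; that is precisely the content of \eqref{eq:cl-inv} and \eqref{eq:wtXS}, which in this paper are consequences of the lemma you are proving (they are phrased in terms of $X_{\eta}$, i.e.\ of $\pi_{\eta}$), so the argument as sketched is circular. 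The uniqueness step suffers from the same defect: two lifts built from different paths in $\QBb{a_{u}}$ differ at stage $u$ by $\PJ(t_{\zeta})$ with $\zeta$ a difference of two path weights (not necessarily in $Q^{\vee,+}$), and the assertion that a nonzero shift ``displaces $\pi'$ into another connected component'' is exactly what has to be proved, not a consequence of bookkeeping of $\delta$-shifts. A non-circular route needs some independent handle on $\SLS_{0}(\lambda)$ --- for instance that $\cl$ commutes with the root operators and preserves $\ve_{i},\vp_{i}$ for all $i \in I_{\af}$ (since $H^{\pi}_{i}=H^{\cl(\pi)}_{i}$), the connectedness of $\QLS(\lambda)$, and the fact that the $W_{\af}$-action of Remark~\ref{rem:weyl} preserves connected components together with its effect on final directions --- none of which your sketch supplies.
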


Now we define the (tail) degree function $\deg_{\lambda}:
\QLS(\lambda) \rightarrow \BZ_{\le 0}$ as follows. 
Let $\eta \in \QLS(\lambda)$, and 
take $\pi_{\eta} \in \SLS_{0}(\lambda)$ as in Lemma~\ref{lem:deg}. 
We see from the argument in \cite[Sect.~6.2]{NS16} that 
$\wt(\pi_{\eta}) = \lambda - \gamma + k\delta$ 
for some $\gamma \in Q^{+}$ and $k \in \BZ_{\le 0}$. 
Then we set $\deg_{\lambda}(\eta):=k$. 
We know the following description of $\deg_{\lambda}$ 
from \cite[Corollary~4.8]{LNSSS2}; for the definition of 
$\wt^{\J}(u \Rightarrow v)$, see Section~\ref{subsec:QBG}. 
%
%
\begin{prop} \label{prop:deg}
For $\eta = 
(w_{1},\,\dots,\,w_{s} \,;\, a_{0},\,a_{1},\,\dots,\,a_{s}) \in \QLS(\lambda)$, 
it holds that
%
%
\begin{equation} \label{eq:deg0}
\deg_{\lambda}(\eta) = - \sum_{u=1}^{s-1} 
a_{u} \pair{\lambda}{\wt^{\J}(w_{u+1} \Rightarrow w_{u})}.
\end{equation}
\end{prop}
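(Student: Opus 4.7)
The plan is to prove Proposition~\ref{prop:deg} by constructing the lift $\pi_\eta \in \SLS_{0}(\lambda)$ explicitly, computing $\wt(\pi_\eta)$ via formula \eqref{eq:wt}, and then extracting the $\delta$-coefficient using level-zero properties of $\lambda$ and a telescoping rearrangement. By Lemma~\ref{lem:PiJ}\,(1), any lift of $\eta = (w_{1}, \ldots, w_{s}; a_{0}, \ldots, a_{s})$ to an element $\pi_\eta = (x_{1}, \ldots, x_{s}; a_{0}, \ldots, a_{s})$ of $\SLS(\lambda)$ with $\cl(x_{u}) = w_{u}$ must be of the form $x_{u} = w_{u}\PJ(t_{\xi_{u}})$ for some $\xi_{u} \in Q^{\vee}$ (with $[\xi_{u}]^{\J}$ well-defined by Lemma~\ref{lem:PiJ}\,(3)); the condition $\kappa(\pi_\eta) = w_{s} \in \WJu$ imposed by Lemma~\ref{lem:deg} forces $\xi_{s} = 0$.

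The crucial identification is that this specific lift in $\SLS_{0}(\lambda)$ satisfies $[\xi_{u} - \xi_{u+1}]^{\J} = \wt^{\J}(w_{u+1} \Rightarrow w_{u})$ for each $1 \le u \le s-1$. Indeed, Lemma~\ref{lem:NNS} combined with the SLS-condition $x_{u+1} \sile x_{u}$ yields only the inequality $[\xi_{u}]^{\J} \ge \wt^{\J}(w_{u+1} \Rightarrow w_{u}) + [\xi_{u+1}]^{\J}$, but the bijection $\Theta$ of Proposition~\ref{prop:SLS}\,(2) between $\Conn(\SLS(\lambda))$ and $\Par(\lambda)$ detects excess translation as a non-empty partition label: the component $\SLS_{0}(\lambda) = \Theta^{-1}((\emptyset)_{i \in I})$ is distinguished by being the unique component in which the translations are as small as possible subject to the $\sile$-constraints along $\eta$. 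Concretely, any choice with $[\xi_{u}]^{\J} > \wt^{\J}(w_{u+1} \Rightarrow w_{u}) + [\xi_{u+1}]^{\J}$ at some step would produce, via Proposition~\ref{prop:SLS}\,(1), a canonical representative with a strictly larger translation part and hence a non-empty partition assignment, contradicting $\pi_\eta \in \SLS_{0}(\lambda)$.

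Once the identification is secured, the computation is routine. By Lemma~\ref{lem:PiJ}\,(2) write $\PJ(t_{\xi_{u}}) = u_{u} t_{\xi_{u} + \zeta_{u}}$ with $u_{u} \in \WJ$ and $\zeta_{u} \in \QJv$. Since $\WJ$ stabilizes $\lambda$ (by the definition \eqref{eq:J} of $\J = \J_\lambda$) and $\pair{\lambda}{\QJv} = 0$, formula \eqref{eq:wtmu} gives
\[
x_{u}\lambda = u_{u}\bigl(\lambda - \pair{\lambda}{\xi_{u} + \zeta_{u}}\delta\bigr) = w_{u}\lambda - \pair{\lambda}{\xi_{u}}\delta.
\]
Substituting into \eqref{eq:wt} and extracting the $\delta$-coefficient yields
\[
\deg_{\lambda}(\eta) = -\sum_{u=1}^{s}(a_{u} - a_{u-1})\pair{\lambda}{\xi_{u}} = -\sum_{u=1}^{s-1} a_{u}\pair{\lambda}{\xi_{u} - \xi_{u+1}},
\]
where the second equality uses Abel summation together with $a_{0} = 0$ and $\xi_{s} = 0$. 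Applying $\pair{\lambda}{\QJv} = 0$ to replace $\xi_{u} - \xi_{u+1}$ by $[\xi_{u} - \xi_{u+1}]^{\J} = \wt^{\J}(w_{u+1} \Rightarrow w_{u})$ gives precisely \eqref{eq:deg0}. The main obstacle is the minimality statement of the second paragraph: making rigorous the claim that $\SLS_{0}(\lambda)$ is characterized among all components of $\SLS(\lambda)$ by minimality of the translation parts requires unwinding the construction of $\Theta$ in \cite[Proposition~7.2.1]{INS} and keeping careful track of how deviations from the minimal $\xi_{u}$ shift the canonical representative of the form \eqref{eq:etaC}.
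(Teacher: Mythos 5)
The paper offers no proof of this proposition at all: it is imported verbatim from \cite[Corollary~4.8]{LNSSS2}, so your attempt is necessarily a different route, namely a from-scratch proof. The frame of your argument is the natural one, and its computational half is correct: writing the directions of the lift as $x_{u}=w_{u}\PJ(t_{\xi_{u}})$ (Lemma~\ref{lem:PiJ}\,(1)), using $u\lambda=\lambda$ for $u\in\WJ$, $\pair{\lambda}{\QJv}=0$ and \eqref{eq:wtmu} to get $x_{u}\lambda=w_{u}\lambda-\pair{\lambda}{\xi_{u}}\delta$, and then Abel summation with $a_{0}=0$ and $[\xi_{s}]^{\J}=0$ does reduce \eqref{eq:deg0} to the single identity $[\xi_{u}-\xi_{u+1}]^{\J}=\wt^{\J}(w_{u+1}\Rightarrow w_{u})$ for the lift $\pi_{\eta}\in\SLS_{0}(\lambda)$ of Lemma~\ref{lem:deg}.

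That identity, however, is exactly where your argument has a genuine gap, and the justification you sketch does not close it. Lemma~\ref{lem:NNS} (and likewise projecting a directed path in $\SBb{a_{u}}$ edge by edge to $\QBJ$ and invoking Proposition~\ref{prop:81}) only yields the inequality $[\xi_{u}]^{\J}\ge\wt^{\J}(w_{u+1}\Rightarrow w_{u})+[\xi_{u+1}]^{\J}$; equality is what must be proved. Your claim that $\SLS_{0}(\lambda)=\Theta^{-1}((\emptyset)_{i\in I})$ is ``the component whose translations are minimal subject to the $\sile$-constraints'' is not a consequence of Proposition~\ref{prop:SLS}: the partition $\Theta(C)$ is read off from the distinguished element $\pi^{C}$ of the form \eqref{eq:etaC}, not from the translation parts of an arbitrary element of $C$, and the two are related only through a chain of root operators whose effect on the $\xi_{u}$'s is precisely the nontrivial thing to control (this control is the substance of \cite[Sect.~7]{INS}, \cite[Sect.~6.2]{NS16} and of the cited \cite[Corollary~4.8]{LNSSS2}, and in the present paper it is what underlies \eqref{eq:cl-inv}, \eqref{eq:ke} and \eqref{eq:wtXS}). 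Moreover ``minimal subject to $\sile$'' is not even the right constraint set: membership in $\SLS(\lambda)$ requires directed paths in the subgraphs $\SBb{a_{u}}$, so one must also show that the minimal increments $\wt^{\J}(w_{u+1}\Rightarrow w_{u})$ are realized by paths in $\QBb{a_{u}}$ lifted to $\SBb{a_{u}}$, and that the element singled out by Lemma~\ref{lem:deg} attains the lower bound. Making all of this rigorous amounts to reproving the cited result (e.g.\ by induction on root operators starting from $\pi_{\lambda}$); as written, the key identification is asserted, not proved.
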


Also, for $\eta = 
(w_{1},\,\dots,\,w_{s} \,;\, a_{0},\,a_{1},\,\dots,\,a_{s}) \in \QLS(\lambda)$ and 
$w \in \WJu$, we define the degree of $\eta$ at $w\lambda$ 
(see \cite[Sect.~3.2]{NNS1} and \cite[Sect.~2.3]{NNS2}) by 
%
%
\begin{equation} \label{eq:degw}
\deg_{w\lambda}(\eta):=
- \sum_{u=1}^{s} 
a_{u} \pair{\lambda}{\wt^{\J}(w_{u+1} \Rightarrow w_{u})}, \quad 
\text{with $w_{s+1}:=w$}.
\end{equation}
%
%
\begin{thm}[{\cite[Theorem~3.2.7]{NNS1}}] \label{thm:NNS}
Let $\lambda \in P^{+}$, and take $\J=\J_{\lambda}$ as in \eqref{eq:J}. 
Then, 
\begin{equation}
\sum_{\eta \in \QLS^{w\lambda,\infty}(\lambda)} 
\be^{\wt(\eta)}q^{\deg_{w\lambda}(\eta)} =
E_{w\lambda}(q,\infty) \qquad 
\text{\rm for $w \in \WJu$}. 
\end{equation}
\end{thm}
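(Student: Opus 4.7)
The plan is to derive the identity from the Ram--Yip formula for the nonsymmetric Macdonald polynomial, combine it with the analysis of the $t\to\infty$ specialization developed by Orr--Shimozono, and then re-package the surviving terms as quantum Lakshmibai--Seshadri paths. The argument proceeds in three stages.

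First, I would invoke the Ram--Yip formula, which expresses $E_{w\lambda}(q,t)$ as a finite sum over alcove walks of a fixed type (determined by a reduced word in the extended affine Weyl group for the element attached to $w\lambda$), where each walk contributes a rational function in $q$ and $t$ assembled from local factors at the folds and crossings. The specialization $t\to\infty$ is then carried out locally: each fold factor has a definite asymptotic order in $t$, and only walks whose folds are \emph{quantum} (i.e.\ correspond to quantum edges in $\QB$, as opposed to Bruhat edges) contribute a finite nonzero limit, while the walks with Bruhat-type folds either vanish or are forced to cancel in groups. This identifies $E_{w\lambda}(q,\infty)$ with a weighted sum over quantum alcove walks.

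Second, I would set up a bijection between the quantum alcove walks surviving at $t=\infty$ and the set $\QLS^{w\lambda,\infty}(\lambda)$ by contracting each maximal run of consecutive unfolded crossings along a common root direction into a single step of a quantum LS path. This yields a sequence $(w_{1},\dots,w_{s}\,;\,0=a_{0}<\cdots<a_{s}=1)$ with $w_{u}\in \WJu$ and edges in $\QBb{a_{u}}$ connecting $w_{u+1}$ to $w_{u}$, as required by Definition~\ref{dfn:QLS}. Under this contraction, the monomial $\be^{\wt(\eta)}$ records the endpoint of the walk in $P$, and each quantum fold occurring at relative position $a_{u}$ contributes the factor $-a_{u}\pair{\lambda}{\wt^{\J}(w_{u+1}\Rightarrow w_{u})}$ to the exponent of $q$; summing these yields exactly the expression for $\deg_{w\lambda}(\eta)$ in \eqref{eq:degw}, including the ``tail'' term $u=s$ arising from the final segment of the walk from $w_{s}$ to the prescribed endpoint $w$.

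Third, and this is the main obstacle, I would identify the correct condition on the final direction $\kappa(\eta)$. The tail of a surviving alcove walk---from the last quantum fold to the endpoint---traces a shortest directed path in $\QB$ between a lift of $\kappa(\eta)$ and $\xw$, read in the label-increasing convention attached to the chosen reduced word, with labels as in \eqref{eq:beta0}; the walk avoids an extra Bruhat fold that would make its $t=\infty$ limit vanish precisely when the first label of this tail path is $\geq 1$, which is the defining property of $\EQB(\xw)$. The compatibility of $\EQB(\xw)$ with the coset structure $W/W_{\Iw}$ established in \cite[Lemma~29]{NNS2}, combined with $\J\subset \Iw$ (Remark~\ref{rem:Iw1}) and Lemma~\ref{lem:wtS}, then allows one to descend the condition to the parabolic quotient $\WJu$, yielding $\kappa(\eta)\in\mcr{\EQB(\xcr{w})}$. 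Assembling the weight, degree, and final-direction matchings completes the identification of $E_{w\lambda}(q,\infty)$ with $\sum_{\eta\in\QLS^{w\lambda,\infty}(\lambda)}\be^{\wt(\eta)}q^{\deg_{w\lambda}(\eta)}$.
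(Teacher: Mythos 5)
This theorem is not proved in the present paper at all: it is quoted verbatim from \cite[Theorem~3.2.7]{NNS1}, and your outline follows essentially the same route as that cited proof, namely the Orr--Shimozono $t=\infty$ specialization of the Ram--Yip formula, a matching of the surviving quantum alcove walks with quantum LS paths so that the weight and the degree $\deg_{w\lambda}$ agree, and the identification of the final-direction condition with $\kappa(\eta)\in\mcr{\EQB(\xw)}$ via label-increasing shortest paths in $\QB$. So the approach is the expected one, with the caveat that your hardest steps (the cancellation analysis at $t=\infty$ and the descent of the $\EQB(\xw)$ condition to $\WJu$) are asserted as a plan rather than carried out.
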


%
\subsection{Proof of the graded character formula for $\Kg_{w}^{-}(\lambda)$.}
\label{subsec:prf-main1}

Let $\lambda=\sum_{i \in I}m_{i}\vpi_{i} \in P^{+}$, 
and take $\J=\J_{\lambda}$ as in  \eqref{eq:J}. 
Let $w \in \WJu$. Recall from \eqref{eq:gch2} that 
\begin{equation*}
\gch \Kg_{w}^{-}(\lambda) = 
\sum_{\pi \in \Kc{w}(\lambda)} \be^{\fwt(\wt(\pi))} q^{\qwt(\wt(\pi))}, 
\end{equation*}
where $\Kc{w}(\lambda) = \bigl\{ \pi \in \SLS(\lambda) 
\mid \kappa(\pi) \in \kq{w} \bigr\}$ by \eqref{eq:BK2}. Because 
$\Kc{w}(\lambda) = 
\bigsqcup_{\eta \in \QLS(\lambda)} 
 \bigl( \cl^{-1}(\eta) \cap 
 \Kc{w}(\lambda) \bigr)$, 
we deduce that
\begin{equation} \label{eq:ch1}
\gch \Kg_{w}^{-}(\lambda) = 
\sum_{\eta \in \QLS(\lambda)} 
\Biggl(
 \underbrace{
   \sum_{\pi \in \cl^{-1}(\eta) \cap \Kc{w}(\lambda)} 
   \be^{\fwt(\wt(\pi))} q^{\qwt(\wt(\pi))}
 }_{=:F_{\eta}}
 \Biggr).
\end{equation}
Let us compute $F_{\eta}$ for each $\eta \in \QLS(\lambda)$. 
We fix an arbitrary $\eta \in \QLS(\lambda)$, and take 
$\pi_{\eta} \in \SLS_{0}(\lambda)$ as in Lemma~\ref{lem:deg}. 
We take and fix a monomial $X_{\eta}$ in root operators $e_{i}$ and $f_{i}$, $i \in I_{\af}$, 
such that $\pi_{\eta}=X_{\eta}\pi_{\lambda}$; we see by \cite[Lemma~6.2.2]{NS16} that
%
%
\begin{equation} \label{eq:cl-inv}
\cl^{-1}(\eta)=\bigl\{X_{\eta} (t_{\xi} \cdot \pi^{C}) \in \SLS(\lambda) \mid 
 C \in \Conn(\SLS(\lambda)),\,
 \xi \in Q_{\Jc}^{\vee} \bigr\}; 
\end{equation}
for the definitions of $\pi^{C} \in \SLS(\lambda)$ and 
$x \cdot \pi$ with $x \in W_{\af}$ and $\pi \in \SLS(\lambda)$, 
see \eqref{eq:etaC} and Remark~\ref{rem:weyl}, respectively. 
By the argument after \cite[(5.4)]{NNS1}, we see that 
\begin{equation} \label{eq:ke}
\kappa(X_{\eta} (t_{\xi} \cdot \pi^{C})) = \kappa(\eta)\PJ(t_{\xi})
\quad \text{for $C \in \Conn(\SLS(\lambda))$ and $\xi \in Q_{\Jc}^{\vee}$}.
\end{equation}
Because 
\begin{equation*}
\kq{w}=
\bigl\{ u \PJ(t_{\xi}) \in (\WJu)_{\af} \mid u \in \mcr{\EQB(\xw)}, 
\xi \in \wt^{\J}(w \Rightarrow u) + Q^{\vee,+}_{\Iw \setminus \J}\bigr\}
\end{equation*}
by Proposition~\ref{prop:kappa2}, we deduce the following: 
\begin{enu}
\item[(i)] $\cl^{-1}(\eta) \cap \Kc{w}(\lambda) \ne \emptyset$ 
$\iff$ $\kappa(\eta) \in \mcr{\EQB(\xw)}$ $\iff$
$\eta \in \QLS^{w\lambda,\infty}(\lambda)$; 
the implication $\Leftarrow$ in the first equivalence follows 
from \eqref{eq:ke} and the fact that 
$\wt^{\J}(w \Rightarrow u) + Q^{\vee,+}_{\Iw \setminus \J} \subset Q_{\Jc}^{\vee}$.

\item[(ii)] If $\eta \in \QLS^{w\lambda,\infty}(\lambda)$, then
%
%
\begin{equation} \label{eq:iia}
\begin{split}
& \cl^{-1}(\eta) \cap \Kc{w}(\lambda) \\
& = 
\left\{ X_{\eta} (t_{\xi} \cdot \pi^{C}) \in \SLS(\lambda) \ \Biggm| \ 
 \begin{array}{l}
 C \in \Conn(\SLS(\lambda)), \\[1mm]
 \xi \in \wt^{\J}(w \Rightarrow \kappa(\eta)) + Q^{\vee,+}_{\Iw \setminus \J}
 \end{array} \right\}; 
\end{split}
\end{equation}
remark that for $C,\,C' \in \Conn(\SLS(\lambda))$ and 
$\xi,\,\xi'\in \wt^{\J}(w \Rightarrow \kappa(\eta)) + Q^{\vee,+}_{\Iw \setminus \J}$, 
%
%
\begin{equation} \label{eq:iib}
X_{\eta}(t_{\xi} \cdot \pi^{C}) = X_{\eta} (t_{\xi'} \cdot \pi^{C'}) \iff
\text{$C=C'$ and $\xi = \xi'$}. 
\end{equation}
Indeed, the implication $\Leftarrow$ is obvious. 
Let us show the implication $\Rightarrow$. 
Since $X_{\eta}(t_{\xi} \cdot \pi^{C}) \in C$ and 
$X_{\eta} (t_{\xi'} \cdot \pi^{C'}) \in C'$, 
we have $C=C'$. Also, since 
$\kappa(\eta)\PJ(t_{\xi}) = 
\kappa(X_{\eta}(t_{\xi} \cdot \pi^{C})) = 
\kappa(X_{\eta}(t_{\xi'} \cdot \pi^{C'})) = 
\kappa(\eta)\PJ(t_{\xi'})$, as seen above, 
we deduce that $\xi - \xi' \in \QJv$ by Lemma~\ref{lem:PiJ}\,(3). 
Therefore, by the assumption that 
$\xi,\,\xi'\in \wt^{\J}(w \Rightarrow \kappa(\eta)) + 
Q^{\vee,+}_{\Iw \setminus \J}$, we obtain $\xi=\xi'$.
\end{enu}

Let $C \in \Conn(\SLS(\lambda))$, and write $\Theta(C) \in \Par(\lambda)$ as
$\Theta(C)=(\rho^{(i)})_{i \in I}$, where
$\rho^{(i)} = (\rho^{(i)}_{1} \ge \cdots \ge \rho^{(i)}_{m_{i}-1} \ge \rho^{(i)}_{m_{i}}=0)$ 
for each $i \in \Jc$, and $\rho^{(i)}=\emptyset$ for all $i \in \J$.  
Also, let $\xi \in \wt^{\J}(w \Rightarrow \kappa(\eta)) + 
Q^{\vee,+}_{\Iw \setminus \J}$, and write it as: 
\begin{equation*}
\xi = \wt^{\J}(w \Rightarrow \kappa(\eta)) + 
\sum_{i \in \Iw \setminus \J} c_{i}\alpha_{i}^{\vee},
\end{equation*}
where $c_{i} \in \BZ_{\ge 0}$ for $i \in \Iw \setminus \J$; 
by convention, we set $c_{i}:=0$ for all $i \in \J$. 
For each $i \in \Iw$, we set $\ti{\rho}^{(i)} := 
(c_{i}+\rho^{(i)}_{1} \ge \cdots \ge c_{i}+\rho^{(i)}_{m_{i}-1} \ge 
 c_{i}+\rho^{(i)}_{m_{i}-1} = c_{i})$, 
which is a partition of length less than or equal to $m_{i}$; 
note that $\ti{\rho}^{(i)} = \emptyset$ for all $i \in S$. 
Also, for each $i \in I \setminus \Iw$, 
we set $\ti{\rho}^{(i)}:=\rho^{(i)}$, 
which is a partition of length less than $m_{i}$. 
Then, $(\ti{\rho}^{(i)})_{i \in I}$ is an element of 
the set $\Par_{w}(\lambda)$ of $I$-tuples 
$\bchi = (\chi^{(i)})_{i \in I}$ of partitions 
such that for each $i \in \Iw$ (resp., $i \in I \setminus \Iw$), 
$\chi^{(i)}$ is a partition of length less than or 
equal to (resp., less than) $m_{i}$. By the same calculations as 
those after \cite[(6.3.3)]{NS16} and \cite[(5.5)]{NNS1}, 
we deduce that
%
%
\begin{equation} \label{eq:wtXS}
\wt ( X (t_{\xi} \cdot \pi^{C}) )
  = \wt (\eta) + \bigl( 
    \underbrace{
    \deg_{\lambda}(\eta) - 
    \pair{\lambda}{\wt^{\J}(w \Rightarrow \kappa(\eta))}}_{%
    \text{$= \deg_{w\lambda}(\eta)$ by \eqref{eq:deg0} and \eqref{eq:degw}}%
    } \bigr) \delta
  - |(\ti{\rho}^{(i)})_{i \in I}| \delta. 
\end{equation}
Summarizing the above, 
for each $\eta \in \QLS^{w\lambda,\infty}(\lambda)$,
\begin{align*}
F_{\eta} & = 
\sum_{\pi \in \cl^{-1}(\eta) \cap \Kc{w}(\lambda)} 
   \be^{\fwt(\wt(\pi))} q^{\qwt(\wt(\pi))} \\[3mm]
& =
  \sum_{
      \begin{subarray}{c} 
       C \in \Conn(\SLS(\lambda)) \\[1mm] 
       \xi \in \wt^{\J}(w \Rightarrow \kappa(\eta)) + Q^{\vee,+}_{\Iw \setminus \J}
      \end{subarray}}
  \be^{\fwt(\wt ( X_{\eta} (t_{\zeta} \cdot \pi^{C}) ))} 
  q^{\qwt(\wt ( X_{\eta} (t_{\zeta} \cdot \pi^{C}) ))} 
\quad \text{by \eqref{eq:iia} and \eqref{eq:iib}} \\[3mm]
& = \be^{\wt(\eta)} q^{\deg_{w\lambda}(\eta)} 
     \sum_{\bchi \in \Par_{w}(\lambda)} q^{-|\bchi|} 
\quad \text{by \eqref{eq:wtXS}},
\end{align*}
where $|\bchi|=\sum_{i \in I} |\chi^{(i)}|$
for $\bchi = (\chi^{(i)})_{i \in I} \in \Par_{w}(\lambda)$ 
(see Section~\ref{subsec:Parp}). Here we recall that 
if $\bchi = (\chi^{(i)})_{i \in I} \in \Par_{w}(\lambda)$, 
then the partition $\chi^{(i)}$ is a partition of 
length less than or equal to $m_{i}-\eps_{i}$ for each $i \in I$, 
where $\eps_{i}$ is as in \eqref{eq:eps}.
Therefore, we deduce that
\begin{equation*}
F_{\eta}
 = \be^{\wt(\eta)} q^{\deg_{w\lambda}(\eta)} 
     \sum_{\bchi \in \Par_{w}(\lambda)} q^{-|\bchi|}
 = \be^{\wt(\eta)} q^{\deg_{w\lambda}(\eta)} 
   \left( \prod_{i \in I} \prod_{r=1}^{m_{i}-\eps_{i}}(1-q^{-r})\right)^{-1}.
\end{equation*}
Also, we have $F_{\eta}=0$ for all 
$\eta \in \QLS(\lambda) \setminus \QLS^{w\lambda,\infty}(\lambda)$, 
since $\cl^{-1} \cap \Kc{w}(\lambda) = \emptyset$, 
as seen in (i) above. Substituting these $F_{\eta}$'s into \eqref{eq:ch1}, 
we obtain 
\begin{align*}
\gch \Kg_{w}^{-}(\lambda) & = 
\sum_{\eta \in \QLS^{w\lambda,\infty}(\lambda)} 
\be^{\wt(\eta)} q^{\deg_{w\lambda}(\eta)} 
    \left( \prod_{i \in I} \prod_{r=1}^{m_{i}-\eps_{i}}(1-q^{-r})\right)^{-1} \\[3mm]
& = \left( \prod_{i \in I} \prod_{r=1}^{m_{i}-\eps_{i}}(1-q^{-r})\right)^{-1}
E_{w\lambda}(q,\infty) \quad \text{by Theorem~\ref{thm:NNS}}. 
\end{align*}
This completes the proof of equation \eqref{eq:main1}.
%
%
\subsection{Proof of the graded character formula for $\Kl_{w}^{-}(\lambda)$.}
\label{subsec:prf-main2}

Let $\lambda \in P^{+}$, and take $\J=\J_{\lambda}$ as in \eqref{eq:J}. Let $w \in \WJu$. 
For each $\eta \in \QLS(\lambda)$, we take and fix a monomial $X_{\eta}$ in 
root operators $e_{i}$ and $f_{i}$, $i \in I_{\af}$, such that 
$\pi_{\eta}=X_{\eta}\pi_{\lambda}$, where we take $\pi_{\eta}$ as in Lemma~\ref{lem:deg}. 
We see from \cite[Theorem~5.12]{NNS1} that 
under the isomorphism $\Psi_{\lambda}:\CB(\lambda) \stackrel{\sim}{\rightarrow} 
\SLS(\lambda)$ of crystals (see Theorem~\ref{thm:isom}), 
the subset $\CB(X_{w}^{-}(\lambda))$ of $\CB(\lambda)$ (see \eqref{eq:cb2}) 
is mapped to 
\begin{equation}
\BX_{w}^{-}(\lambda):=
\SLS_{\sige w}(\lambda) \setminus 
\bigl\{ X_{\eta}(t_{\wt^{\J}(w \Rightarrow \kappa(\eta))} \cdot \pi_{\lambda}) 
 \mid \eta \in \QLS(\lambda) \bigr\}. 
\end{equation}
Therefore, we have 
\begin{equation} \label{eq:main2a}
\gch \Kl_{w}^{-}(\lambda) = 
\sum_{\pi \in \Kc{w}(\lambda) \setminus \BX_{w}^{-}(\lambda)} 
\be^{\fwt(\wt(\pi))} q^{\qwt(\wt(\pi))}. 
\end{equation}
Here, by (i) and (ii) in the previous subsection, we see that 
\begin{equation*}
\Kc{w}(\lambda) 
= \left\{ X_{\eta} (t_{\xi} \cdot \pi^{C}) \in \SLS(\lambda) \ \biggm| \ 
\begin{array}{l}
C \in \Conn(\SLS(\lambda)),\,\eta \in \QLS^{w\lambda,\infty}(\lambda), \\[1mm]
\xi \in \wt^{\J}(w \Rightarrow \kappa(\eta)) + Q^{\vee,+}_{\Iw \setminus \J}
\end{array}
\right\}.
\end{equation*}
From this, we obtain
\begin{equation} \label{eq:main2b}
\Kc{w}(\lambda) \setminus \BX_{w}^{-}(\lambda) = \bigl\{ 
 X_{\eta}(t_{\wt^{\J}(w \Rightarrow \kappa(\eta))} \cdot \pi_{\lambda}) 
 \mid \eta \in \QLS^{w\lambda,\infty}(\lambda)
\bigr\}.
\end{equation}
Combining \eqref{eq:main2a} and \eqref{eq:main2b}, 
we conclude by the same computation as in the previous subsection 
(this time, only the term corresponding to 
$\bchi = (\emptyset)_{i \in I} \in \Par_{w}(\lambda)$ remains) that
\begin{equation*}
\gch \Kl_{w}^{-}(\lambda) = 
\sum_{\eta \in \QLS^{w\lambda,\infty}(\lambda)} 
\be^{\wt(\eta)} q^{\deg_{w\lambda}(\eta)} = 
E_{w\lambda}(q,\infty). 
\end{equation*}
This completes the proof of equation \eqref{eq:main2}.

\appendix

\section*{Appendix.}
%
%
\section{Recursive proof of the formula \eqref{eq:main1} for $\Kg_{w}^{-}(\lambda)$.}
\label{sec:prf2}
%
%
\begin{prop} \label{prop:interval}
Let $\J$ be a subset of $I$. 
For each $x \in (\WJu)_{\af}$ of the form $x=w\PJ(t_{\xi})$ with 
$w \in \WJu$ and $\xi \in Q^{\vee,+}=\sum_{i \in I} \BZ_{\ge 0} \alpha_{i}^{\vee}$, 
the subset $\bigl\{ v \in \WJu \mid v \sile x \bigr\}$ of $\WJu$ 
has a unique maximal element in the Bruhat order $\le$ on $W$, 
which we denote by $v_{x}$. 
\end{prop}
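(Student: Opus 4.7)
The plan is to reformulate the semi-infinite condition combinatorially via Lemma~\ref{lem:NNS}, establish that the set in question is a Bruhat order ideal, and then prove principality by a Coxeter-theoretic lifting argument rooted in Lemma~\ref{lem:dia-si}.

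First, by Lemma~\ref{lem:NNS} (taking $\zeta = 0$ and identifying $v \in \WJu$ with $v\PJ(t_0) \in (\WJu)_{\af}$), the condition $v \sile x = w\PJ(t_\xi)$ is equivalent to $\wt^{\J}(v \Rightarrow w) \le [\xi]^{\J}$ in $Q^{\vee,+}_{\Jc}$. Thus
\begin{equation*}
M := \{v \in \WJu \mid v \sile x\} = \{v \in \WJu \mid \wt^{\J}(v \Rightarrow w) \le [\xi]^{\J}\},
\end{equation*}
which is nonempty and finite, containing $w$ since $\wt^{\J}(w \Rightarrow w) = 0$. I would then show $M$ is a Bruhat order ideal in $\WJu$: given $v \le v'$ in Bruhat with $v' \in M$, concatenate a saturated Bruhat chain from $v$ to $v'$ in $\QBJ$ (whose edges are all Bruhat and hence contribute no quantum weight) with any shortest directed path from $v'$ to $w$ in $\QBJ$. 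The concatenation has quantum weight $\wt^{\J}(v' \Rightarrow w) \le [\xi]^{\J}$, so by Proposition~\ref{prop:81}, $\wt^{\J}(v \Rightarrow w) \le \wt^{\J}(v' \Rightarrow w) \le [\xi]^{\J}$, placing $v$ in $M$.

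It then remains to show $M$ is a \emph{principal} order ideal; since $M$ is a finite Bruhat order ideal, this reduces to showing any two Bruhat-maximal elements $v_1, v_2 \in M$ must coincide. Suppose for contradiction that $v_1 \ne v_2$ are both Bruhat-maximal in $M$. Then $v_1 \not\ge v_2$, and by the standard Coxeter lifting property (applied to a reduced expression of $v_2$) one can locate a simple reflection $s_i$ with $s_iv_1 > v_1$, where via Lemma~\ref{lem:236} the choice is further refined so that $s_iv_1 \in \WJu$. I would then apply Lemma~\ref{lem:dia-si}(1) to the inequality $v_1 \sile x$ — provided $x^{-1}\alpha_i$ lies in $(\Delta^- \cup \DeJ) + \BZ\delta$ — to conclude $s_iv_1 \sile x$, exhibiting an element of $M$ strictly Bruhat-above $v_1$ and contradicting maximality. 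The sign condition on $x^{-1}\alpha_i$ is in turn forced by the coexistence of $v_1 \sile x$ and the second maximality hypothesis $v_2 \sile x$ together with the incomparability of $v_1$ and $v_2$, which (via Lemma~\ref{lem:dia-si}(2), (3)) would otherwise permit lifting $v_2$ as well.

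The main obstacle is executing this sign analysis so that the hypotheses of the appropriate case of Lemma~\ref{lem:dia-si} are satisfied in every configuration — in particular, handling cases where the naive choice of $s_i$ fails to keep $s_iv_1 \in \WJu$, or where $x^{-1}\alpha_i$ falls on the ``wrong'' side. I expect to resolve these by an outer induction on $\ell(v_1) + \ell(v_2)$: when no simple reflection does the job directly, use Lemma~\ref{lem:dia-si}(2) or (3) to replace $v_1$ or $v_2$ by nearby Bruhat-translates that still lie in $M$ but are strictly closer in Bruhat length, iterating until the two maximal elements are forced to collapse into one.
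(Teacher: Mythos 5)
Your reduction via Lemma~\ref{lem:NNS} (so that $v \sile x$ becomes $\wt^{\J}(v \Rightarrow w) \le [\xi]^{\J}$) and your verification that $M$ is a Bruhat order ideal of $\WJu$ (concatenating a saturated Bruhat chain with a shortest directed path and invoking Proposition~\ref{prop:81}) are both correct, but they are not where the difficulty lies: a finite order ideal can perfectly well have several maximal elements, so everything hinges on the uniqueness step, and that part of your proposal has genuine gaps. First, you never establish the existence of a simple reflection $i$ satisfying all three conditions you need simultaneously ($s_iv_1 > v_1$, $s_iv_1 \in \WJu$, and $x^{-1}\alpha_i \in (\Delta^{-}\cup\DeJ)+\BZ\delta$); the lifting property ``applied to a reduced expression of $v_2$'' does not produce such an $i$ in general, and the claim that the sign condition on $x^{-1}\alpha_i$ is ``forced'' because otherwise Lemma~\ref{lem:dia-si}\,(2),(3) ``would permit lifting $v_2$ as well'' is not an argument: parts (2) and (3) of that lemma replace $x$ by $s_ix$ (they yield $v_2 \sile s_ix$ or $s_iv_2 \sile s_ix$), so they do not produce an element of $M$ --- the set defined by the \emph{fixed} $x$ --- lying strictly above $v_2$. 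Second, the fallback induction on $\ell(v_1)+\ell(v_2)$, in which you replace $v_1$ or $v_2$ by shorter elements of $M$, destroys exactly the maximality hypothesis your contradiction is meant to exploit, and no mechanism is given by which the two maximal elements are ``forced to collapse''.

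The uniqueness assertion is the van der Kallen--type combinatorial core of the proposition, and the paper does not obtain it by elementary lifting either: it first reduces to the case $\J=\emptyset$ (Lemma~\ref{lem:INS} together with \cite[Proposition~2.5.1]{BB}), then uses Soergel's stabilization result \cite[Claim~4.14]{Soe97} to convert the finitely many semi-infinite comparisons among elements of $W \cup \{x\}$ into ordinary Bruhat comparisons $yt_{n\gamma} \le zt_{n\gamma}$ in $W_{\af}$ for large $n$, and finally invokes \cite[Lemma~11\,(ii)]{LLM}, which says that the coset $Wt_{n\gamma}$ of the parabolic subgroup $W \subset W_{\af}$ intersected with the lower Bruhat interval below $xt_{n\gamma}$ has a unique maximal element. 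A self-contained proof along your lines would essentially have to reprove that parabolic-coset lemma; as written, your proposal assumes the hard step rather than proving it.
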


\begin{proof}
By Lemma~\ref{lem:INS} and \cite[Proposition~2.5.1]{BB}, 
it suffices to prove the assertion in the case that $\J = \emptyset$. 

We take and fix $\gamma \in Q^{\vee}$ such that 
$\pair{\gamma}{\alpha_{i}^{\vee}} > 0$ for all $i \in I$. 
Since $W \cup \{ x \}$ is a finite set, 
we deduce from \cite[Claim~4.14]{Soe97} 
(see also \cite[Lectures 12, 13]{Pet97} and \cite[Appendix~A.3]{INS}) 
that there exists $N \in \BZ_{\ge 0}$ such that for $y,\,z \in W \cup \{ x \}$, 
\begin{enu}
\item[(a)] if $y \sile z$, then 
$yt_{n\gamma} \le zt_{n\gamma}$ for all $n \ge N$
in the (ordinary) Bruhat order $\ge$ on $W_{\af}$; 

\item[(b)] if $yt_{n\gamma} \le zt_{n\gamma}$ for some $n \ge N$, 
then $y \sile z$.
\end{enu}
Let $n \ge N$. Because $W$ is a parabolic subgroup of $W_{\af}$, 
it follows from \cite[Lemma 11\,(ii)]{LLM} 
(note that $t_{n\gamma} \le xt_{n\gamma}$ by (a) 
since $n \ge N$ and $e \sile x$) that 
the subset $\bigl\{ vt_{n\gamma} \mid v \in W,\,
vt_{n\gamma} \le x t_{n\gamma} \bigr\}$ of $W_{\af}$ 
has a unique maximal element 
in the Bruhat order on $W_{\af}$, 
which we write as $v_{n}t_{n\gamma}$ with $v_{n} \in W$. 
Here we see by (a) and (b) that $v_{n}$ does not depend on $n \ge N$, 
and is a unique maximal element of 
$\bigl\{ v \in W \mid v \sile x \bigr\}$ 
in the semi-infinite Bruhat order $\sile$ on $W_{\af}$ 
(and hence in the Bruhat order on $W$; see Remark~\ref{rem:SB}).
This proves the proposition. 
\end{proof}

Let $\J$ be a subset of $I$. 
For $w \in \WJu$, we set $\WJu_{\ge w}:=
\bigl\{ v \in \WJu \mid v \ge w \bigr\}$. 
Also, recall from \eqref{eq:kw} 
the definition of $\kq{v}$ for $v \in \WJu$. 
%
%
\begin{lem} \label{lem:dj2}
Keep the notation and setting above. 
It holds that 
%
%
\begin{equation} \label{eq:dj2}
( (\WJu)_{\af} )_{\sige w} = 
\bigsqcup_{v \in \WJu_{\ge w}} \kq{v}. 
\end{equation}
\end{lem}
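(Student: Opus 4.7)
The plan is to prove the two inclusions of \eqref{eq:dj2} separately, with Proposition~\ref{prop:interval} as the main tool, and then verify disjointness of the union. The inclusion $\supseteq$ is routine: for each $v \in \WJu_{\ge w}$, Remark~\ref{rem:SB} promotes $v \ge w$ to $v \sige w$, whence $\kq{v} \subseteq ( (\WJu)_{\af} )_{\sige v} \subseteq ( (\WJu)_{\af} )_{\sige w}$.

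The substantive inclusion is $\subseteq$. Given $x \in ( (\WJu)_{\af} )_{\sige w}$, I would first put $x$ into a form meeting the hypothesis of Proposition~\ref{prop:interval}. By Lemma~\ref{lem:PiJ}\,(1), write $x = u\PJ(t_{\eta})$ with $u \in \WJu$ and $\eta \in Q^{\vee}$; Lemma~\ref{lem:NNS} combined with $x \sige w$ then forces $[\eta]^{\J} \in Q_{\Jc}^{\vee,+}$. Since Lemma~\ref{lem:PiJ}\,(3) permits modifying $\eta$ by an arbitrary element of $\QJv$ without changing $\PJ(t_{\eta})$, I can replace $\eta$ by $\eta + \zeta$ for an appropriate $\zeta \in \QJv$ to arrange $\eta \in Q^{\vee,+}$. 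Proposition~\ref{prop:interval} then supplies a unique Bruhat-maximal element $v_{x} \in \WJu$ of $\bigl\{ v \in \WJu \mid v \sile x \bigr\}$; since $w$ lies in this set (using Remark~\ref{rem:SB}), we get $v_{x} \ge w$, i.e., $v_{x} \in \WJu_{\ge w}$. To see $x \in \kq{v_{x}}$, note that $v_{x} \sile x$ directly gives $x \in ( (\WJu)_{\af} )_{\sige v_{x}}$, and if $x \sige z$ held for some $z \in \WJu$ with $z > v_{x}$ in the Bruhat order, then $z \sile x$ would exhibit an element of $\bigl\{ v \in \WJu \mid v \sile x \bigr\}$ strictly greater than $v_{x}$, contradicting the maximality of $v_{x}$.

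For disjointness, suppose $x \in \kq{v_{1}} \cap \kq{v_{2}}$ with $v_{1}, v_{2} \in \WJu_{\ge w}$, and form the associated $v_{x}$ as above. Since $x \sige v_{1}$ and $x \sige v_{2}$, both $v_{1}$ and $v_{2}$ lie in $\bigl\{ v \in \WJu \mid v \sile x \bigr\}$, hence $v_{1} \le v_{x}$ and $v_{2} \le v_{x}$ in the Bruhat order. If $v_{1} < v_{x}$ were strict, then $v_{x} \in \WJu$ together with $v_{x} > v_{1}$ and $v_{x} \sile x$ would violate the defining property of $\kq{v_{1}}$; hence $v_{1} = v_{x}$, and symmetrically $v_{2} = v_{x}$, so $v_{1} = v_{2}$.

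The main obstacle is the preparatory step of arranging $\eta \in Q^{\vee,+}$ so that Proposition~\ref{prop:interval} becomes applicable; once the maximal element $v_{x}$ is in hand, both coverage and disjointness reduce to a direct unpacking of the definition of $\kq{v}$.
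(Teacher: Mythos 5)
Your proposal is correct and follows essentially the same route as the paper: both arguments hinge on Proposition~\ref{prop:interval} to produce the unique Bruhat-maximal element $v_{x}$ of $\bigl\{ v \in \WJu \mid v \sile x \bigr\}$, and then deduce coverage and disjointness by unpacking the definition of $\kq{v}$. Your explicit adjustment of $\eta$ modulo $\QJv$ to land in $Q^{\vee,+}$ is exactly the point the paper handles via Lemmas~\ref{lem:NNS} and \ref{lem:PiJ}\,(3), so there is no substantive difference.
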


\begin{proof}
First we prove the equality: 
\begin{equation} \label{eq:dj1z}
( (\WJu)_{\af} )_{\sige w} = 
\bigcup_{v \in \WJu_{\ge w}} \kq{v}. 
\end{equation}
Let $x \in ( (\WJu)_{\af} )_{\sige w}$; 
note that $w \in \bigl\{v \in \WJu \mid v \sile x \bigr\}$. 
By Proposition~\ref{prop:interval}, 
the subset $\bigl\{v \in \WJu \mid v \sile x \bigr\}$ of $\WJu$ 
has a unique maximal element $v_{x}$. 
Hence it follows from the definition of $v_{x}$ that
$w \le v_{x} \sile x$; in particular, 
$x \in ( (\WJu)_{\af} )_{\sige v_{x}}$. 
If $x \notin \kq{v_{x}}$, then 
$x \in ( (\WJu)_{\af} )_{\sige u}$ for some $u \in \WJu$ 
such that $u > v_{x}$. Since $u \in \bigl\{v \in \WJu \mid v \sile x \bigr\}$, 
this contradicts the maximality of $v_{x}$. 
Hence, $x \in \kq{v_{x}} \subset \bigcup_{v \in \WJu_{\ge w}} \kq{v}$. 
Thus we have shown the inclusion $\subset$. 
Let us show the opposite inclusion $\supset$. 
Let $x \in \bigcup_{v \in \WJu_{\ge w}} \kq{v}$, 
and take $v \in \WJu_{\ge w}$ such that $x \in \kq{v}$. 
Then, by the definition of $\kq{v}$, 
we have $x \in ( (\WJu)_{\af} )_{\sige v}$. 
Since $v \ge w$, we have 
$( (\WJu)_{\af} )_{\sige v} \subset ( (\WJu)_{\af} )_{\sige w}$, 
and hence $x \in ( (\WJu)_{\af} )_{\sige w}$. 
Thus we have shown the inclusion $\supset$, as desired. 

The remaining task is to prove that 
$\kq{v_{1}} \cap \kq{v_{2}} = \emptyset$ 
for all $v_{1},\,v_{2} \in \WJu$ with $v_{1} \ne v_{2}$. 
Suppose, for a contradiction, that 
$\kq{v_{1}} \cap \kq{v_{2}} \ne \emptyset$
for some $v_{1},\,v_{2} \in \WJu$ with $v_{1} \ne v_{2}$, 
and let $x \in \kq{v_{1}} \cap \kq{v_{2}}$; 
note that $x$ is of the form $x=w\PJ(t_{\xi})$ for some 
$w \in \WJu$ and $\xi \in Q^{\vee,+}$ 
since $x \sige v_{1} \in \WJu$ (see Lemmas~\ref{lem:NNS} and 
\ref{lem:PiJ}\,(3)). Therefore, by Proposition~\ref{prop:interval}, 
the subset $\bigl\{ v \in \WJu \mid v \sile x \bigr\}$ of $\WJu$ 
has a unique maximal element $v_{x}$ in the Bruhat order $\le$ on $W$. 
Since $x \in \kq{v_{1}} \subset ( (\WJu)_{\af} )_{\sige v_{1}}$, 
we have $v_{1} \in \bigl\{ v \in \WJu \mid v \sile x \bigr\}$, 
and hence $v_{1} \le v_{x}$ by the maximality of $v_{x}$. 
Since $x \in ( (\WJu)_{\af} )_{\sige v_{x}}$ by the definition of $v_{x}$ and 
$v_{x} \ge v_{1}$, and since $x \in \kq{v_{1}}$, we see by the definition of $\kq{v_{1}}$ 
that $v_{1} = v_{x}$. Similarly, we see that $v_{2}=v_{x}$, 
and hence $v_{1}=v_{2}$, which is a contradiction. 
This proves the lemma. 
\end{proof}

We fix $\lambda \in P^{+} \subset P_{\af}^{0}$ 
(see \eqref{eq:P-fin} and \eqref{eq:P}), 
and take $\J=\J_{\lambda}$ as in \eqref{eq:J}. 
By Lemma~\ref{lem:dj2} and \eqref{eq:BK2}, we have
%
%
\begin{equation} \label{eq:dj3}
\gch V_{w}^{-}(\lambda) 
= \sum_{v \in \WJu_{\ge w}} \gch \Kg_{v}^{-}(\lambda)
\quad \text{for each $w \in \WJu$}.
\end{equation}

For each $i \in I$, 
we define a $\BC(q)$-linear operator $\sD_{i}$ on $\BC(q)[P]$ by
\begin{equation*}
\sD_{i} \be^{\nu} := 
\frac{ \be^{\nu - \rho} - \be^{s_{i}(\nu - \rho)} }{ 1-\be^{\alpha_{i}} } \be^{\rho} = 
\frac{ \be^{\nu}-\be^{\alpha_{i}}\be^{s_{i}\nu} }{ 1-\be^{\alpha_{i}} } \qquad 
\text{for $\nu \in P$, where $q=\be^{\delta}$}; 
\end{equation*}
note that $\sD_{i}^{2}=\sD_{i}$, and that
\begin{equation} \label{eq:Di}
\sD_{i} \be^{\nu} = 
\begin{cases}
\be^{\nu}(1+\be^{\alpha_{i}}+\be^{2\alpha_{i}}+ \cdots +\be^{-\pair{\nu}{\alpha_{i}^{\vee}}\alpha_{i}}) 
  & \text{if $\pair{\nu}{\alpha_{i}^{\vee}} \le 0$}, \\[1.5mm]
0 & \text{if $\pair{\nu}{\alpha_{i}^{\vee}}=1$}, \\[1.5mm]
-\be^{\nu}(\be^{- \alpha_{i}}+ \be^{- 2\alpha_{i}}+\cdots +\be^{(-\pair{\nu}{\alpha_{i}^{\vee}}+1)\alpha_{i}})
  & \text{if $\pair{\nu}{\alpha_{i}^{\vee}} \ge 2$}.
\end{cases}
\end{equation}
Also, we define a $\BQ(q)$-linear operator $\sT_{i}$ on $\BC(q)[P]$ by
\begin{equation*}
\sT_{i}:=\sD_{i}-1, \quad \text{that is}, \quad 
\sT_{i}\be^{\nu}=\frac{\be^{\alpha_i}(\be^{\nu}-\be^{s_i\nu})}{1-\be^{\alpha_i}};
\end{equation*}
note that $\sT_{i}^{2}=-\sT_{i}$ and $\sT_{i}\sD_{i}=\sD_{i}\sT_{i}=0$.
%
%
\begin{prop}[\cite{NS16}; see also {\cite[Proposition~6.6 and Remark~6.7]{NOS}}] \label{prop:dem}
Let $w \in \WJu$ and $i \in I$. Then, 
%
%
\begin{equation} \label{eq:dem1}
\sT_{i} \gch V_{w}^{-}(\lambda) = 
\begin{cases}
\gch V_{s_{i}w}^{-}(\lambda) - \gch V_{w}^{-}(\lambda)
 & \text{\rm if $\pair{w\lambda}{\alpha_{i}^{\vee}} < 0$}, \\[1mm]
0 & \text{\rm if $\pair{w\lambda}{\alpha_{i}^{\vee}} \ge 0$}.
\end{cases}
\end{equation}
\end{prop}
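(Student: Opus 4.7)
The plan is to deduce the identity from the crystal-theoretic realization of $V_{w}^{-}(\lambda)$ as $\SLS_{\sige w}(\lambda)$ (Theorem~\ref{thm:isom}) by analyzing how this subset decomposes into $\alpha_{i}$-strings inside the ambient crystal $\SLS(\lambda)$. The essential mechanism is the identity
\begin{equation*}
\sT_{i}(f) = \be^{\alpha_{i}}\,\frac{f - s_{i}f}{1 - \be^{\alpha_{i}}},
\end{equation*}
which forces $\sT_{i}$ to vanish on every $s_{i}$-invariant element of $\BC(q)[P]$. On the other hand, the character of any full $\alpha_{i}$-string $\{b_{0},\,f_{i}b_{0},\,\ldots,\,f_{i}^{n}b_{0}\}$ with $n = \pair{\wt b_{0}}{\alpha_{i}^{\vee}}$ is $s_{i}$-invariant, because its weights $\wt(b_{0}) - k\alpha_{i}$ form an arithmetic progression that is symmetric under $s_{i}$ (which sends $\wt(b_{k}) \mapsto \wt(b_{n-k})$).

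Suppose first that $\pair{w\lambda}{\alpha_{i}^{\vee}} \ge 0$. The first step is to verify that $\SLS_{\sige w}(\lambda)$ is a disjoint union of full $\alpha_{i}$-strings of $\SLS(\lambda)$; by Remark~\ref{rem:weyl}, this is equivalent to stability of $\SLS_{\sige w}(\lambda)$ under the Weyl group action of $s_{i}$. That stability in turn reduces, via the effect of $s_{i} \cdot \pi$ on final directions, to the stability of $\{x \in (\WJu)_{\af} \mid x \sige w\}$ under the corresponding action, which follows from Lemma~\ref{lem:dia-si} together with the fact that $\pair{w\lambda}{\alpha_{i}^{\vee}} \ge 0$ implies $s_{i}w \sige w$ (after replacing $s_{i}w$ by $\PJ(s_{i}w)$ if $\pair{w\lambda}{\alpha_{i}^{\vee}}=0$, using Remark~\ref{rem:dem}). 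The summation identity then yields $\sT_{i} \gch V_{w}^{-}(\lambda) = 0$.

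Suppose next that $\pair{w\lambda}{\alpha_{i}^{\vee}} < 0$. Then $w^{-1}\alpha_{i} \in \Delta^{-} \setminus \DeJ^{-}$, so $s_{i}w \in \WJu$ with $s_{i}w \sil w$, and $V_{w}^{-}(\lambda) \subset V_{s_{i}w}^{-}(\lambda)$ by \eqref{eq:subset}. Applying the previous case to $s_{i}w$ (for which $\pair{s_{i}w\lambda}{\alpha_{i}^{\vee}} > 0$) shows that $\SLS_{\sige s_{i}w}(\lambda)$ is a disjoint union of full $\alpha_{i}$-strings, so $\sD_{i} \gch V_{s_{i}w}^{-}(\lambda) = \gch V_{s_{i}w}^{-}(\lambda)$. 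The crucial combinatorial input is that within each such string $B' = \{b_{0},\,f_{i}b_{0},\,\ldots,\,f_{i}^{n}b_{0}\} \subset \SLS_{\sige s_{i}w}(\lambda)$, the intersection $B' \cap \SLS_{\sige w}(\lambda)$ is a lower $f_{i}$-segment $\{f_{i}^{k}b_{0},\,\ldots,\,f_{i}^{n}b_{0}\}$ for some $0 \le k \le n+1$. Granting this, summing the elementary identity $\sD_{i}\bigl(\sum_{j=k}^{n}\be^{\wt(f_{i}^{j}b_{0})}\bigr) = \sum_{j=0}^{n}\be^{\wt(f_{i}^{j}b_{0})}$ (a direct check using \eqref{eq:Di}) over all strings gives $\sD_{i} \gch V_{w}^{-}(\lambda) = \gch V_{s_{i}w}^{-}(\lambda)$, whence $\sT_{i} \gch V_{w}^{-}(\lambda) = \gch V_{s_{i}w}^{-}(\lambda) - \gch V_{w}^{-}(\lambda)$ via $\sT_{i} = \sD_{i} - 1$.

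The main obstacle is the lower-segment property just invoked: the monotonicity, inside each $\alpha_{i}$-string of $\SLS(\lambda)$, of the condition $\kappa(\pi) \sige w$ as one descends by $f_{i}$. This would be established by tracking the evolution of the final direction $\kappa(\pi)$ step by step through \eqref{eq:t-e}--\eqref{eq:fpi} and invoking Lemma~\ref{lem:dia-si} at each transition of the string to compare consecutive final directions in the semi-infinite Bruhat order. A cleaner alternative is to appeal to Kashiwara's general compatibility of extremal weight modules with Demazure-type subcrystals (\cite[Sect.~2.8]{Kas05}), which directly produces the Demazure-character recurrence and hence \eqref{eq:dem1}.
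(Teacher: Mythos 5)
Your reduction of \eqref{eq:dem1} to a per-string computation has a genuine gap at its crucial step. In the case $\pair{w\lambda}{\alpha_{i}^{\vee}}<0$ you only establish (or rather assert) that the intersection of $\SLS_{\sige w}(\lambda)$ with each full $\alpha_{i}$-string $\{b_{0},f_{i}b_{0},\dots,f_{i}^{n}b_{0}\}$ of $\SLS_{\sige s_{i}w}(\lambda)$ is a lower $f_{i}$-segment $\{f_{i}^{k}b_{0},\dots,f_{i}^{n}b_{0}\}$, and you then invoke the ``elementary identity'' $\sD_{i}\bigl(\sum_{j=k}^{n}\be^{\wt(f_{i}^{j}b_{0})}\bigr)=\sum_{j=0}^{n}\be^{\wt(f_{i}^{j}b_{0})}$ for arbitrary $0\le k\le n+1$. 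That identity is false for intermediate segments: take $n=2$ and $k=1$, with $\nu_{j}:=\wt(f_{i}^{j}b_{0})$, so $\pair{\nu_{1}}{\alpha_{i}^{\vee}}=0$ and $\pair{\nu_{2}}{\alpha_{i}^{\vee}}=-2$; then \eqref{eq:Di} gives $\sD_{i}(\be^{\nu_{1}}+\be^{\nu_{2}})=\be^{\nu_{1}}+(\be^{\nu_{2}}+\be^{\nu_{1}}+\be^{\nu_{0}})=\be^{\nu_{0}}+2\be^{\nu_{1}}+\be^{\nu_{2}}$, which is not the full-string character. The identity holds only when $k\in\{0,\,n,\,n+1\}$, i.e.\ when the intersection is the whole string, the single $f_{i}$-minimal element, or empty. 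So what your argument really needs is the string property (trichotomy) for the Demazure-type subsets $\CB_{w}^{-}(\lambda)\cong\SLS_{\sige w}(\lambda)$ --- the level-zero analogue of Kashiwara's string property for Demazure crystals --- and this is exactly the nontrivial input proved in the cited sources (\cite{NS16}, \cite{NOS}); it does not follow from ``lower segment'' alone, nor from the global-basis compatibility of \cite[Sect.~2.8]{Kas05} that you offer as a fallback, which gives $f_{i}$-stability of $\CB_{w}^{-}(\lambda)$ but not the Demazure-operator recurrence.

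There is also a smaller flaw in the case $\pair{w\lambda}{\alpha_{i}^{\vee}}\ge 0$: you claim that being a disjoint union of full $\alpha_{i}$-strings is \emph{equivalent} to stability under the Weyl-group action $s_{i}\cdot$ of Remark~\ref{rem:weyl}. It is not: the action $s_{i}\cdot$ only reflects each element to the mirror position in its string, so, e.g., the two endpoints of a string of length $\ge 2$, or a single middle element of weight fixed by $s_{i}$, form $s_{i}\cdot$-stable sets that are not unions of full strings. The correct (and provable) route here is to show that $\SLS_{\sige w}(\lambda)$ is stable under both $e_{i}$ and $f_{i}$ when $\pair{w\lambda}{\alpha_{i}^{\vee}}\ge 0$ (for $e_{i}$ one tracks the final direction and uses Lemma~\ref{lem:dia-si}, or argues module-theoretically from $E_{i}v_{w}=0$), which does yield a union of full strings and hence an $s_{i}$-invariant character killed by $\sT_{i}$. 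Note finally that the paper itself does not prove this proposition but quotes it; the references establish precisely the string-property/Demazure-operator statements that your proposal takes for granted.
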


%
\begin{prop} \label{prop:rec1}
Let $w \in \WJu$ and $i \in I$. Then, 
%
%
\begin{equation} \label{eq:rec1}
\sT_{i} \gch \Kg_{w}^{-}(\lambda) = 
\begin{cases}
\gch \Kg_{s_{i}w}^{-}(\lambda)
 & \text{\rm if $\pair{w\lambda}{\alpha_{i}^{\vee}} < 0$}, \\[1mm]
- \gch \Kg_{w}^{-}(\lambda)
 & \text{\rm if $\pair{w\lambda}{\alpha_{i}^{\vee}} > 0$}, \\[1mm]
0 & \text{\rm if $\pair{w\lambda}{\alpha_{i}^{\vee}} = 0$}.
\end{cases}
\end{equation}
\end{prop}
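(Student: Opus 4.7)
The plan is to argue by descending induction on $\ell(w)$ inside $\WJu$. Applying $\sT_i$ linearly to equation~\eqref{eq:dj3} yields
\begin{equation*}
\sT_i \gch \Kg_w^-(\lambda) = \sT_i \gch V_w^-(\lambda) - \sum_{v \in \WJu,\, v > w} \sT_i \gch \Kg_v^-(\lambda),
\end{equation*}
where the first summand on the right is handled by Proposition~\ref{prop:dem} and each term of the sum is supplied by the induction hypothesis. The base case is $w$ the longest element of $\WJu$: then $\Kg_w^-(\lambda) = V_w^-(\lambda)$ and no $v \in \WJu$ satisfies $v > w$, so \eqref{eq:rec1} reduces to Proposition~\ref{prop:dem} together with the length observation that any $v \in \WJu$ with $v \ge s_i w$ (when $s_i w \in \WJu$) must equal $s_i w$ or $w$; this gives $\gch V_{s_i w}^-(\lambda) - \gch V_w^-(\lambda) = \gch \Kg_{s_i w}^-(\lambda)$ via \eqref{eq:dj3}, matching \eqref{eq:rec1}.

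For the inductive step, I would partition $\WJu = A^+ \sqcup A^- \sqcup A^0$ according to the sign of $\pair{v\lambda}{\alpha_i^\vee}$. By Lemma~\ref{lem:236} and Remark~\ref{rem:236}, the assignment $v \mapsto s_i v$ restricts to an involution $A^+ \leftrightarrow A^-$ inside $\WJu$, with $v \in A^+$ (resp.\ $A^-$) implying $s_i v > v$ (resp.\ $s_i v < v$). Substituting the induction hypothesis and reindexing the $v \in A^-$ contribution by $u = s_i v$ gives
\begin{equation*}
\sum_{v \in \WJu,\, v > w} \sT_i \gch \Kg_v^-(\lambda) = \sum_{u \in A^+,\, s_i u > w} \gch \Kg_u^-(\lambda) - \sum_{u \in A^+,\, u > w} \gch \Kg_u^-(\lambda).
\end{equation*}
The combinatorial heart is the subword characterization of the Bruhat order on $W$: for $u \in A^+$ (so $s_i u > u$) a reduced expression of $s_i u$ begins with $s_i$, and hence $w \le s_i u$ holds iff either $w \le u$ or $s_i w \le u$, where the second alternative requires $s_i w < w$ (to lift the resulting subword to a reduced expression of $w$ by prepending $s_i$). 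Accordingly $\{u \in A^+ : s_i u > w\}$ equals $\{u \in A^+ : u > s_i w\}$ in Case~1, $\{u \in A^+ : u \ge w\}$ in Case~2, and $\{u \in A^+ : u > w\}$ in Case~3; Cases~2 and 3 then immediately yield $-\gch \Kg_w^-(\lambda)$ and $0$ respectively, using $\sT_i \gch V_w^-(\lambda) = 0$ from Proposition~\ref{prop:dem}.

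Case~1 requires one additional combinatorial identity. Combining \eqref{eq:dj3} applied to both $V_w^-$ and $V_{s_i w}^-$ with Proposition~\ref{prop:dem}, the desired formula $\sT_i \gch \Kg_w^-(\lambda) = \gch \Kg_{s_i w}^-(\lambda)$ reduces to the claim that every $v \in \WJu$ with $v > s_i w$ and $v \not\ge w$ automatically lies in $A^+$. I expect this to be the main obstacle, and I will establish it by applying Lemma~\ref{lem:dia-b}\,(1) with $u := s_i w$: since $w \in A^-$ in Case~1, we have $u^{-1}\alpha_i = -w^{-1}\alpha_i \in \Delta^+ \setminus \DeJ^+$, and if $v \notin A^+$ then $v^{-1}\alpha_i \in \Delta^- \cup \DeJ$, so the lemma yields $s_i u = w \le v$, contradicting $v \not\ge w$. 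Once this claim is in place, the two sums in Case~1 balance exactly and the inductive step closes, completing the proof.
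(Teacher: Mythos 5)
Your proposal is correct and follows essentially the same route as the paper: descending induction on $w$, applying $\sT_{i}$ to the decomposition \eqref{eq:dj3}, invoking Proposition~\ref{prop:dem} and the induction hypothesis after partitioning by the sign of $\pair{v\lambda}{\alpha_{i}^{\vee}}$, with the crucial Case~1 identity (your claim that $v > s_{i}w$, $v \not\ge w$ forces $\pair{v\lambda}{\alpha_{i}^{\vee}} > 0$) being exactly the paper's equality $\WJu_{\ge s_{i}w} \setminus \WJu_{\ge w} = s_{i}(\WJu_{\ge w})_{-}''$, proved by the same application of Lemma~\ref{lem:dia-b}\,(1). The only difference is bookkeeping: you partition all of $\WJu$ by sign and use the standard subword/lifting property of the Bruhat order where the paper instead splits $\WJu_{\ge w}$ into four pieces and uses Lemma~\ref{lem:dia-b}\,(2),(3).
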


\begin{proof}
We show \eqref{eq:rec1} by descending induction on $w$. 
Assume first that $w = \mcr{\lng}$; note that $\pair{\mcr{\lng}\lambda}{\alpha_{i}^{\vee}} = 
\pair{\lng\lambda}{\alpha_{i}^{\vee}} \le 0$. Also, by \eqref{eq:dj3}, we have 
$\gch V_{\mcr{\lng}}^{-}(\lambda) = \gch \Kg_{\mcr{\lng}}^{-}(\lambda)$. 
If $\pair{\mcr{\lng}\lambda}{\alpha_{i}^{\vee}} < 0$, then it follows from \eqref{eq:dem1} that
\begin{align*}
\sT_{i}\gch \Kg_{\mcr{\lng}}^{-}(\lambda) & = 
\sT_{i}\gch V_{\mcr{\lng}}^{-}(\lambda) = 
\gch V_{s_{i}\mcr{\lng}}^{-}(\lambda) - \gch V_{\mcr{\lng}}^{-}(\lambda) \\
& = \gch \Kg_{s_{i}\mcr{\lng}}^{-}(\lambda), 
\end{align*}
where the last equality follows from \eqref{eq:dj3} and the fact that
$\WJu_{\ge s_{i}\mcr{\lng}} = \bigl\{ s_{i}\mcr{\lng},\,\mcr{\lng} \bigr\}$. 
If $\pair{\mcr{\lng}\lambda}{\alpha_{i}^{\vee}} = 0$, 
then it follows from \eqref{eq:dem1} that
\begin{equation*}
\sT_{i}\gch \Kg_{\mcr{\lng}}^{-}(\lambda) = 
\sT_{i}\gch V_{\mcr{\lng}}^{-}(\lambda) = 0.
\end{equation*}
This proves \eqref{eq:rec1} in the case that $w = \mcr{\lng}$. 

Assume next that $w < \mcr{\lng}$. We set 
\begin{equation*}
(\WJu_{\ge w})_{0}:=
 \bigl\{ v \in \WJu_{\ge w} \mid \pair{v\lambda}{\alpha_{i}^{\vee}}=0 \bigr\}, \quad 
(\WJu_{\ge w})_{+}:=
 \bigl\{ v \in \WJu_{\ge w} \mid \pair{v\lambda}{\alpha_{i}^{\vee}}>0 \bigr\}, 
\end{equation*}
\begin{equation*}
(\WJu_{\ge w})_{-}':=
 \bigl\{ v \in \WJu_{\ge w} \mid 
 \text{$\pair{v\lambda}{\alpha_{i}^{\vee}} < 0$ and $s_{i}v \in \WJu_{\ge w}$} \bigr\}, 
\end{equation*}
\begin{equation*}
(\WJu_{\ge w})_{-}'':=
 \bigl\{ v \in \WJu_{\ge w} \mid 
 \text{$\pair{v\lambda}{\alpha_{i}^{\vee}} < 0$ and $s_{i}v \not\in \WJu_{\ge w}$} \bigr\}; 
\end{equation*}
observe that 
\begin{equation*}
\WJu_{\ge w} = 
(\WJu_{\ge w})_{0} \sqcup (\WJu_{\ge w})_{+} \sqcup 
(\WJu_{\ge w})_{-}' \sqcup (\WJu_{\ge w})_{-}'', \qquad 
(\WJu_{\ge w})_{-}' = s_{i}(\WJu_{\ge w})_{+}.
\end{equation*}

\paragraph{Case 1.}
%
Assume that $\pair{w\lambda}{\alpha_{i}^{\vee}} < 0$; note that 
$w \in (\WJu_{\ge w})_{-}''$. By \eqref{eq:dj3}, we have 
\begin{equation} \label{eq:rec1g}
\sT_{i} \gch V_{w}^{-}(\lambda)  = 
\sum_{v \in \WJu_{\ge w}} \sT_{i} \gch \Kg_{v}^{-}(\lambda). 
\end{equation}
The right-hand side of \eqref{eq:rec1g} is equal to: 
\begin{align*}
& 
\sum_{v \in (\WJu_{\ge w})_{0}} 
 \sT_{i} \gch \Kg_{v}^{-}(\lambda) + 
\sum_{v \in (\WJu_{\ge w})_{+}} 
 \sT_{i} \gch \Kg_{v}^{-}(\lambda) \\
& \hspace*{50mm} + 
\sum_{v \in (\WJu_{\ge w})_{-}'} 
 \sT_{i} \gch \Kg_{v}^{-}(\lambda) + 
\sum_{v \in (\WJu_{\ge w})_{-}''} 
 \sT_{i} \gch \Kg_{v}^{-}(\lambda). 
\end{align*}
Here, by the induction hypothesis, we see that
\begin{align*}
& \sum_{v \in (\WJu_{\ge w})_{0}} 
  \sT_{i} \gch \Kg_{v}^{-}(\lambda) = 0, \qquad 
  \sum_{v \in (\WJu_{\ge w})_{+}} 
  \sT_{i} \gch \Kg_{v}^{-}(\lambda) = 
  -\sum_{v \in (\WJu_{\ge w})_{+}} \gch \Kg_{v}^{-}(\lambda), \\
& \sum_{v \in (\WJu_{\ge w})_{-}'} 
  \sT_{i} \gch \Kg_{v}^{-}(\lambda) = 
  \sum_{v \in (\WJu_{\ge w})_{-}'} \gch \Kg_{s_{i}v}^{-}(\lambda) = 
  \sum_{v \in (\WJu_{\ge w})_{+}} \gch \Kg_{v}^{-}(\lambda). 
\end{align*}
Therefore, the right-hand side of \eqref{eq:rec1g} is equal to 
$\sum_{v \in (\WJu_{\ge w})_{-}''} 
 \sT_{i} \gch \Kg_{v}^{-}(\lambda)$. 
Hence, again by the induction hypothesis, we deduce that
\begin{equation} \label{eq:rec1b}
\sT_{i} \gch V_{w}^{-}(\lambda) 
 = \sT_{i} \gch \Kg_{w}^{-}(\lambda) + 
 \sum_{v \in (\WJu_{\ge w})_{-}'',\,v \ne w} 
 \gch \Kg_{s_{i}v}^{-}(\lambda).
\end{equation}
Also, by \eqref{eq:dem1} and \eqref{eq:dj3}, we have
\begin{equation} \label{eq:rec1c}
\sT_{i} \gch V_{w}^{-}(\lambda) = 
\gch V_{s_{i}w}^{-}(\lambda) - \gch V_{w}^{-}(\lambda) = 
\sum_{v \in \WJu_{\ge s_{i}w} \setminus \WJu_{\ge w}} 
\gch \Kg_{v}^{-}(\lambda).
\end{equation}
We claim that 
$\WJu_{\ge s_{i}w} \setminus \WJu_{\ge w} = s_{i}(\WJu_{\ge w})_{-}''$. 
Let $v \in \WJu_{\ge s_{i}w} \setminus \WJu_{\ge w}$. 
If $\pair{v\lambda}{\alpha_{i}^{\vee}} \le 0$, the 
we deduce from Lemma~\ref{lem:dia-b}\,(1), 
together with the assumption
$\pair{s_{i}w\lambda}{\alpha_{i}^{\vee}} > 0$ 
of Case 1, that $v \in \WJu_{\ge w}$, 
which is a contradiction. 
As a result, we have $\pair{v\lambda}{\alpha_{i}^{\vee}} > 0$, 
and hence $\pair{s_{i}v\lambda}{\alpha_{i}^{\vee}} < 0$. 
Since $\pair{s_{i}w\lambda}{\alpha_{i}^{\vee}} > 0$, 
it follows from Lemma~\ref{lem:dia-b}\,(3) that 
$s_{i}v \in \WJu_{\ge w}$. Since $s_{i}(s_{i}v) = v \notin \WJu_{\ge w}$, 
we obtain $s_{i}v \in (\WJu_{\ge w})_{-}''$, and hence 
$v \in s_{i}(\WJu_{\ge w})_{-}''$. Thus we have shown 
$\WJu_{\ge s_{i}w} \setminus \WJu_{\ge w} \subset s_{i}(\WJu_{\ge w})_{-}''$. 
In order to show the opposite inclusion, 
let $v \in (\WJu_{\ge w})_{-}''$. 
Then we see from Lemma~\ref{lem:dia-b}\,(3) that 
$s_{i}v \in \WJu_{\ge s_{i}w}$. In addition, by the definition of 
$(\WJu_{\ge w})_{-}''$, we have $s_{i}v \notin \WJu_{\ge w}$. 
Thus we obtain $s_{i}v \in \WJu_{\ge s_{i}w} \setminus \WJu_{\ge w}$, as desired. 

By the equality $\WJu_{\ge s_{i}w} \setminus \WJu_{\ge w} = s_{i}(\WJu_{\ge w})_{-}''$ 
and \eqref{eq:rec1c}, we see that
\begin{align*}
\sT_{i} \gch V_{w}^{-}(\lambda) & = 
\sum_{v \in s_{i}(\WJu_{\ge w})_{-}''} \gch \Kg_{v}^{-}(\lambda)=
\sum_{v \in (\WJu_{\ge w})_{-}''} \gch \Kg_{s_{i}v}^{-}(\lambda) \\
& = \gch \Kg_{s_{i}w}^{-}(\lambda) + 
\sum_{v \in (\WJu_{\ge w})_{-}'',\,v \ne w} 
\gch \Kg_{s_{i}v}^{-}(\lambda). 
\end{align*}
Combining this equality and \eqref{eq:rec1b}, we obtain 
$\sT_{i} \gch \Kg_{w}^{-}(\lambda) = \gch \Kg_{s_{i}w}^{-}(\lambda)$, as desired. 

\paragraph{Case 2.}
%
Assume that $\pair{w\lambda}{\alpha_{i}^{\vee}} > 0$; 
note that $w \in (\WJu_{\ge w})_{+}$. 
We see by Lemma~\ref{lem:dia-b}\,(2) that 
if $v \in \WJu_{\ge w}$ satisfies 
$\pair{v\lambda}{\alpha_{i}^{\vee}} < 0$, then 
$s_{i}v \in \WJu_{\ge w}$. Hence it follows that 
$(\WJu_{\ge w})_{-}''=\emptyset$, 
$(\WJu_{\ge w})_{-}'$ is identical to 
$(\WJu_{\ge w})_{-}:=\bigl\{v \in \WJu_{\ge w} \mid 
\pair{v\lambda}{\alpha_{i}^{\vee}} < 0\bigr\}$, and 
$s_{i}(\WJu_{\ge w})_{-} = (\WJu_{\ge w})_{+}$. 
By \eqref{eq:dj3}, we have
\begin{equation} \label{eq:rec1d}
\sT_{i} \gch V_{w}^{-}(\lambda) = 
 \sum_{v \in (\WJu_{\ge w})_{0}} \sT_{i} \gch \Kg_{v}^{-}(\lambda) + 
 \sum_{v \in (\WJu_{\ge w})_{+}} \sT_{i} \gch \Kg_{v}^{-}(\lambda) + 
 \sum_{v \in (\WJu_{\ge w})_{-}} \sT_{i} \gch \Kg_{v}^{-}(\lambda).
\end{equation}
Here we remark that 
$\sT_{i} \gch V_{w}^{-}(\lambda) = 0$ by \eqref{eq:dem1}. 
Also, by the induction hypothesis, 
we have 
\begin{equation*}
\sum_{v \in (\WJu_{\ge w})_{0}} \sT_{i} \gch \Kg_{v}^{-}(\lambda) = 0,
\end{equation*}
\begin{align*}
\sum_{v \in (\WJu_{\ge w})_{+}} \sT_{i} \gch \Kg_{v}^{-}(\lambda) & = 
\sT_{i} \gch \Kg_{w}^{-}(\lambda) + 
\sum_{v \in (\WJu_{\ge w})_{+},\,v \ne w} \sT_{i} \gch \Kg_{v}^{-}(\lambda) \\ 
& = \sT_{i} \gch \Kg_{w}^{-}(\lambda) -
\sum_{v \in (\WJu_{\ge w})_{+},\,v \ne w} \gch \Kg_{v}^{-}(\lambda), 
\end{align*}
\begin{equation*}
\sum_{v \in (\WJu_{\ge w})_{-}} \sT_{i} \gch \Kg_{v}^{-}(\lambda) = 
\sum_{v \in (\WJu_{\ge w})_{-}} \gch \Kg_{s_{i}v}^{-}(\lambda) = 
\sum_{v \in (\WJu_{\ge w})_{+}} \gch \Kg_{v}^{-}(\lambda). 
\end{equation*}
Substituting these equalities into \eqref{eq:rec1d}, we obtain 
$\sT_{i} \gch \Kg_{w}^{-}(\lambda) = - \gch \Kg_{w}^{-}(\lambda)$, as desired. 

\paragraph{Case 3.}
%
Assume that $\pair{w\lambda}{\alpha_{i}^{\vee}} = 0$; 
note that $w \in (\WJu_{\ge w})_{0}$. 
By the same argument as in Case 2, 
we see that 
$(\WJu_{\ge w})_{-}''=\emptyset$, 
$(\WJu_{\ge w})_{-}' = (\WJu_{\ge w})_{-}$, and 
$s_{i}(\WJu_{\ge w})_{-} = (\WJu_{\ge w})_{+}$; 
hence the equality \eqref{eq:rec1d} holds also in this case. 
Here we remark that $\sT_{i} \gch V_{w}^{-}(\lambda) = 0$ by \eqref{eq:dem1}, 
and that 
\begin{equation*}
\sum_{v \in (\WJu_{\ge w})_{0}} \sT_{i} \gch \Kg_{v}^{-}(\lambda) =  
\sT_{i} \gch \Kg_{w}^{-}(\lambda), 
\end{equation*}
\begin{equation*}
\sum_{v \in (\WJu_{\ge w})_{+}} \sT_{i} \gch \Kg_{v}^{-}(\lambda) = 
- \sum_{v \in (\WJu_{\ge w})_{+}} \gch \Kg_{v}^{-}(\lambda),
\end{equation*}
\begin{equation*}
\sum_{v \in (\WJu_{\ge w})_{-}} \sT_{i} \gch \Kg_{v}^{-}(\lambda) = 
\sum_{v \in (\WJu_{\ge w})_{+}} \gch \Kg_{v}^{-}(\lambda)
\end{equation*}
by the induction hypothesis. 
Substituting these equalities into \eqref{eq:rec1d}, we conclude that 
$\sT_{i} \gch \Kg_{w}^{-}(\lambda) = 0$, as desired. 

This completes the proof of Proposition~\ref{prop:rec1}. 
\end{proof}

Recall the following theorem from 
\cite{CO} and \cite[Theorem~35]{NNS2}. 
%
%
\begin{thm} \label{thm:CO}
Let $w \in \WJu$ and $i \in I$ be such that 
$\pair{w\lambda}{\alpha_{i}^{\vee}} < 0$. 
\begin{enu}
\item If $-\xcr{w}^{-1}\alpha_{i}$ is not a simple root, then 
\begin{equation} \label{co1}
\sT_{i} E_{w\lambda}(q,\infty) = E_{s_{i}w\lambda}(q,\infty). 
\end{equation}

\item If $-\xcr{w}^{-1}\alpha_{i}$ is a simple root, then 
\begin{equation} \label{co2}
\sT_{i} E_{w\lambda}(q,\infty) = 
(1-q^{\pair{\lambda}{\xcr{w}^{-1}\alpha_{i}^{\vee}}})E_{s_{i}w\lambda}(q,\infty). 
\end{equation}
\end{enu}
\end{thm}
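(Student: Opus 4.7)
The plan is to prove Theorem~\ref{thm:CO} combinatorially using the quantum Lakshmibai-Seshadri path formula $E_{w\lambda}(q,\infty) = \sum_{\eta \in \QLS^{w\lambda,\infty}(\lambda)} \be^{\wt(\eta)} q^{\deg_{w\lambda}(\eta)}$ from Theorem~\ref{thm:NNS}. I would apply the explicit formula \eqref{eq:Di} for $\sT_i = \sD_i - 1$ monomial-by-monomial to this sum, partitioning $\QLS^{w\lambda,\infty}(\lambda)$ according to the sign of $\pair{\wt(\eta)}{\alpha_i^\vee}$, and then compare the result with the analogous sum for $E_{s_iw\lambda}(q,\infty)$.

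The central step would be to construct a weight-compatible $s_i$-pairing on the relevant subset of quantum LS paths. Using Lemmas~\ref{lem:dia-b} and \ref{lem:dia-qb}, I would pair a path $\eta$ with the path $s_i\cdot\eta$ obtained by applying $s_i$ componentwise (restricted to those directions for which the result still lies in $\WJu$), so that the contributions from paths with $\pair{\wt(\eta)}{\alpha_i^\vee} > 0$ and those with $\pair{\wt(\eta)}{\alpha_i^\vee} < 0$ largely cancel after summation, leaving a residual sum indexed by paths whose final direction lies in $\mcr{\EQB(\xcr{s_iw})}$. The recursive description of $\EQB(\xcr{w})$ in Proposition~\ref{prop:323}\,(2) then controls how this residual sits inside $\mcr{\EQB(\xcr{w})}$, and by Lemma~\ref{lem:311} the resulting dichotomy is precisely the one appearing in Theorem~\ref{thm:CO}, namely whether $-\xcr{w}^{-1}\alpha_i$ is a simple root or not.

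The main obstacle is to produce the extra factor $(1 - q^{\pair{\lambda}{\xcr{w}^{-1}\alpha_i^\vee}})$ in Case~(2). Here $\EQB(\xcr{s_iw}) \subset \EQB(\xcr{w})$ by \eqref{eq:2b}, so the same paths contribute to both $E_{w\lambda}(q,\infty)$ and $E_{s_iw\lambda}(q,\infty)$, but with \emph{different} exponents $\deg_{w\lambda}(\eta)$ and $\deg_{s_iw\lambda}(\eta)$; an explicit computation of the difference via \eqref{eq:degw}, Lemma~\ref{lem:wtS}, and an analysis of the quantum edge $\xcr{w} \to s_i\xcr{w}$ should yield exactly $\pair{\lambda}{\xcr{w}^{-1}\alpha_i^\vee}$, at which point the geometric-series collapse in \eqref{eq:Di} produces the desired rational factor. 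As an alternative route, one could deduce the theorem (following Cherednik-Orr) from the $t\to\infty$ limit of the intertwining recursion for $E_\mu(q,t)$ in the polynomial representation of the double affine Hecke algebra, which would avoid the combinatorial analysis but require care in tracking the $t$-dependent normalizations as $t \to \infty$.
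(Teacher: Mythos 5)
The first thing to note is that the paper contains no proof of Theorem~\ref{thm:CO} at all: it is recalled from Cherednik--Orr \cite{CO} and from \cite[Theorem~35]{NNS2}, so there is no internal argument to measure yours against. Your two suggested routes are precisely the two existing ones: the quantum-LS-path computation is in substance the proof carried out in \cite{NNS2}, and the intertwiner recursion in the polynomial representation of the double affine Hecke algebra, with the $t\to\infty$ limit, is Cherednik--Orr's original derivation. So the proposal is aimed correctly, but as written it is a sketch of the cited proofs rather than a self-contained argument.

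Within the sketch, the central mechanism would fail as stated. First, the ``weight-compatible $s_i$-pairing'' defined by applying $s_i$ componentwise to the directions of $\eta \in \QLS(\lambda)$ is not well defined: the reflected sequence need not satisfy the chain conditions in $\QBb{a_u}$, and the diamond lemmas~\ref{lem:dia-qb} and \ref{lem:dia-b} control single edges, not whole directed paths. Second, and more fundamentally, $\sT_i$ applied to one monomial $\be^{\wt(\eta)}q^{\deg_{w\lambda}(\eta)}$ is, by \eqref{eq:Di}, a geometric string of $|\pair{\wt(\eta)}{\alpha_i^{\vee}}|$ monomials, so no bijective pairing of individual paths can by itself effect the cancellation; the terms must be organized along the $\alpha_i$-strings of the classical crystal structure on $\QLS(\lambda)$ (root operators $e_i,f_i$, $i \in I$), and one must track simultaneously how the weight, the degree $\deg_{w\lambda}$ (which changes to $\deg_{s_iw\lambda}$ on the right-hand side in \emph{both} cases, not only in case (2)), and the membership condition $\kappa(\eta) \in \mcr{\EQB(\xcr{w})}$ vary along a string. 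In case (2) this string analysis is also where the factor $1-q^{\pair{\lambda}{\xcr{w}^{-1}\alpha_i^{\vee}}}$ must come from, via $\EQB(\xcr{s_{i}w}) \subset \EQB(\xcr{w})$ (see \eqref{eq:2b}) together with the identity $\wt(\xcr{w} \Rightarrow u) = \wt(\xcr{s_{i}w} \Rightarrow u) + \alpha_m^{\vee}$ for $u \in \EQB(\xcr{s_{i}w})$, where $\alpha_m = -\xcr{w}^{-1}\alpha_i$ (this is exactly how Proposition~\ref{prop:323}\,(2) is used in Case~1.2 of the proof of Proposition~\ref{prop:kappa}). Supplying this analysis is precisely the content of \cite[Theorem~35]{NNS2}; until it is carried out, your proposal delegates the essential work to the reference it is meant to reprove.
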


For $w \in \WJu$, let $F_{w}(q)$ denote 
the right-hand side of \eqref{eq:main1}, that is, 
%
%
\begin{equation} \label{eq:F}
F_{w}(q):=
 \underbrace{\prod_{i \in I} 
 \prod_{r=1}^{\pair{\lambda}{\alpha_{i}^{\vee}}-\eps_{i}}
 \frac{1}{1-q^{-r}}}_{=:c_{w}(q)} E_{w\lambda}(q,\infty),
\end{equation}
where $\eps_{i}=\eps_{i}(\xw)$ is as in  \eqref{eq:eps}. 
%
%
\begin{lem} \label{lem:F}
Let $w \in \WJu$ and $i \in I$ 
be such that $\pair{w\lambda}{\alpha_{i}^{\vee}} < 0$. Then, 
\begin{equation} \label{co3}
\sT_{i} F_{w}(q) = F_{s_{i}w}(q). 
\end{equation}
\end{lem}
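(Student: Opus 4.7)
The plan is to exploit the fact that $c_w(q)$ is a scalar in $\BC(q)$ (no $\be^\nu$ factors), so $\sT_i$ commutes with multiplication by it: $\sT_i F_w(q) = c_w(q)\,\sT_i E_{w\lambda}(q,\infty)$. Hence it suffices to prove the two equalities
\begin{equation*}
c_w(q)\,\sT_i E_{w\lambda}(q,\infty) = c_{s_iw}(q)\,E_{s_iw\lambda}(q,\infty),
\end{equation*}
by comparing $c_w(q)/c_{s_iw}(q)$ with the proportionality constant in Theorem~\ref{thm:CO}. The proof will therefore naturally split into the two cases of that theorem, according to whether $-\xcr{w}^{-1}\alpha_i$ is a simple root or not.

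Before starting the case analysis, I would record two structural facts. First, since $\pair{w\lambda}{\alpha_i^{\vee}} < 0$ and $w \in \WJu$, one has $w^{-1}\alpha_i \in \Delta^{-}\setminus \DeJ^{-}$; by Lemma~\ref{lem:236} (together with Remark~\ref{rem:236}) this gives $s_iw \in \WJu$, and therefore $\xcr{s_iw}=s_i\xcr{w}=s_iw\lng(\J)$ so that all quantities $F_{s_iw}(q)$, $c_{s_iw}(q)$, etc., are well-defined. Second, since $-w^{-1}\alpha_i \in \Delta^{+}\setminus \DeJ^{+}$ and any element of $W_\J$ (in particular $\lng(\J)$) preserves $\Delta^{+}\setminus \DeJ^{+}$ (because the $\Jc$-components of a root are unchanged by the action of $W_\J$), we get $-\xcr{w}^{-1}\alpha_i = \lng(\J)(-w^{-1}\alpha_i) \in \Delta^{+}\setminus \DeJ^{+}$. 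In particular, $\xcr{w}^{-1}\alpha_i \in \Delta^{-}$, and if it is minus a simple root $\alpha_k$, then automatically $k \notin \J$, i.e.\ $\pair{\lambda}{\alpha_k^{\vee}} \ge 1$.

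The crux is the comparison of $\eps_j(\xcr{w})$ and $\eps_j(\xcr{s_iw})$. Because $\xcr{s_iw}\alpha_j = s_i(\xcr{w}\alpha_j)$ and $s_i$ preserves the sign of every root except $\pm\alpha_i$, one has $\eps_j(\xcr{w}) \ne \eps_j(\xcr{s_iw})$ if and only if $\xcr{w}\alpha_j = \pm \alpha_i$, i.e.\ $\alpha_j = \pm \xcr{w}^{-1}\alpha_i$. Since $\alpha_j \in \Delta^{+}$ and $\xcr{w}^{-1}\alpha_i \in \Delta^{-}$, only the choice $\alpha_j = -\xcr{w}^{-1}\alpha_i$ is possible, and it occurs for some (necessarily unique) $j$ precisely when $-\xcr{w}^{-1}\alpha_i$ is a simple root.

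In Case (1) of Theorem~\ref{thm:CO} (the quantity $-\xcr{w}^{-1}\alpha_i$ is not simple), no $\eps_j$ changes, so $c_w(q)=c_{s_iw}(q)$, and $\sT_i E_{w\lambda}(q,\infty) = E_{s_iw\lambda}(q,\infty)$ immediately yields $\sT_i F_w = F_{s_iw}$. In Case (2), with $-\xcr{w}^{-1}\alpha_i = \alpha_k$, one checks directly that $\xcr{w}\alpha_k = -\alpha_i$ is negative while $\xcr{s_iw}\alpha_k = \alpha_i$ is positive, so $\eps_k(\xcr{w}) = 0$ and $\eps_k(\xcr{s_iw}) = 1$; all other $\eps_j$ are unchanged. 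Comparing the two products then gives
\begin{equation*}
\frac{c_{s_iw}(q)}{c_w(q)} = 1 - q^{-\pair{\lambda}{\alpha_k^{\vee}}},
\end{equation*}
which exactly matches the factor $(1 - q^{\pair{\lambda}{\xcr{w}^{-1}\alpha_i^{\vee}}}) = (1-q^{-\pair{\lambda}{\alpha_k^{\vee}}})$ appearing in \eqref{co2}, so $c_w(q)\,\sT_i E_{w\lambda}(q,\infty) = c_{s_iw}(q) E_{s_iw\lambda}(q,\infty)$, as required. The main technical obstacle is the second structural fact above, namely verifying that $\lng(\J)$ stabilizes $\Delta^{+}\setminus \DeJ^{+}$; once this is in hand, the rest is a careful bookkeeping of $\eps_j$'s, with the two Theorem~\ref{thm:CO} cases matched up by design.
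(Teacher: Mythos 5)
Your proposal is correct and follows essentially the same route as the paper's proof: compare $c_{w}(q)$ and $c_{s_{i}w}(q)$ by tracking which $\eps_{j}$ can change when $\xw$ is replaced by $\xcr{s_{i}w}=s_{i}\xw$ (only possibly at $\alpha_{j}=-\xw^{-1}\alpha_{i}$, and only when this is a simple root), and then match the two cases with Theorem~\ref{thm:CO}, checking that the ratio $c_{s_{i}w}(q)/c_{w}(q)=1-q^{\pair{\lambda}{\xw^{-1}\alpha_{i}^{\vee}}}$ in the simple-root case. The only cosmetic difference is that you derive $\xw^{-1}\alpha_{i}\in\Delta^{-}$ (indeed in $\Delta^{-}\setminus\DeJ^{-}$) from the $\WJ$-stability of $\Delta^{+}\setminus\DeJ^{+}$, whereas the paper reads it off directly from $\pair{\xw\lambda}{\alpha_{i}^{\vee}}=\pair{w\lambda}{\alpha_{i}^{\vee}}<0$.
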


\begin{proof}
Since $s_{i}w \in \WJu$ 
(see Lemma~\ref{lem:236} and Remark~\ref{rem:236}), 
we have $\xcr{s_{i}w} = s_{i}ww_{\circ}(S) = s_{i}\xw$, 
where $w_{\circ}(S) \in \WJ$ is the longest element of $\WJ$ 
(see Section~\ref{subsec:SiBG}). 

Assume first that $-\xcr{w}^{-1}\alpha_{i}$ is not a simple root. 
Then, for each $k \in I$, we have
%
%
\begin{equation} \label{co3-1}
(\eps_{k}(\xcr{w}) = 1 \iff) \quad
\xw s_{k} > \xw \iff \xcr{s_{i}w} s_{k} > \xcr{s_{i}w} \quad
(\iff \eps_{k}(\xcr{s_{i}w})=1). 
\end{equation}
Indeed, this is verified as follows. 
Assume that $\xw s_{k} > \xw$. 
Then, $\xw \alpha_{k} \in \Delta^{+}$. Also, since 
$\pair{\xw\lambda}{\alpha_{i}^{\vee}} = 
 \pair{w\lambda}{\alpha_{i}^{\vee}} < 0$, 
we have $\xw^{-1}\alpha_{i} \in \Delta^{-}$. 
It follows that $\xw \alpha_{k} \ne \alpha_{i}$, and hence 
$s_{i}\xw \alpha_{k} \in \Delta^{+}$, 
which implies that $\xcr{s_{i}w}s_{k} = s_{i}\xw s_{k} > s_{i} \xw$. 
Assume that $\xcr{s_{i}w}s_{k} > \xcr{s_{i}w}$; 
we have $s_{i} \xw \alpha_{k} = \xcr{s_{i}w}\alpha_{k} \in \Delta^{+}$. 
Suppose, for a contradiction, that 
$\xw \alpha_{k} \in \Delta^{-}$. Then we have 
$\xw \alpha_{k} = - \alpha_{i}$, and hence 
$-\xw^{-1}\alpha_{i} = \alpha_{k}$. 
However, this contradicts the assumption that 
$-\xw^{-1}\alpha_{i}$ is not a simple root. 
Thus we have shown \eqref{co3-1}. 
By \eqref{co3-1}, we deduce that $c_{w}(q) = c_{s_{i}w}(q)$.
Therefore, \eqref{co3} follows from \eqref{co1} in this case. 

Assume next that $-\xw^{-1}\alpha_{i}=\alpha_{m}$ for some $m \in I$. 
Then, by exactly the same argument as for \eqref{co3-1}, we have
\begin{equation} \label{co3-2}
\begin{cases}
\xw s_{k} > \xw \iff \xcr{s_{i}w} s_{k} > \xcr{s_{i}w} 
  \quad \text{for all $k \in I \setminus \{m\}$}, \\
\xw s_{m} < \xw \quad \text{and} \quad \xcr{s_{i}w} s_{m} > \xcr{s_{i}w}, 
\end{cases}
\end{equation}
and hence
\begin{equation} \label{co3-3}
\frac{1}{1-q^{ -\pair{\lambda}{\alpha_{m}^{\vee}} }} c_{s_{i}w}(q) =  c_{w}(q);
\end{equation}
notice that $-\pair{\lambda}{\alpha_{m}^{\vee}} = \pair{\lambda}{\xw^{-1}\alpha_{i}^{\vee}}$. 
Therefore, \eqref{co3} follows from \eqref{co2} in this case. 
This proves the lemma. 
\end{proof}

Now, we prove the formula \eqref{eq:main1}, that is, 
\begin{equation}
\gch \Kg_{w}^{-}(\lambda) = F_{w}(q) \quad 
\text{for $w \in \WJu$}
\end{equation}
by descending induction on $w$. 
Assume first that $w = \mcr{\lng}$. 
Then we see by \eqref{eq:dj3} that 
$\gch \Kg_{\mcr{\lng}}^{-}(\lambda) = 
 \gch V_{\mcr{\lng}}^{-}(\lambda)$. 
Since $\xcr{\mcr{\lng}} = \lng$, it follows that 
$\eps_{k}(\xcr{\mcr{\lng}}) = 0$ for all $k \in I$, 
and hence 
\begin{equation*}
F_{\mcr{\lng}}(q) = 
 \prod_{i \in I} 
 \prod_{r=1}^{\pair{\lambda}{\alpha_{i}^{\vee}}}
 \frac{1}{1-q^{-r}} E_{\lng\lambda}(q,\infty).
\end{equation*}
Therefore, we deduce that $\gch \Kg_{\mcr{\lng}}^{-}(\lambda) = F_{\mcr{\lng}}(q)$ 
by \cite[Theorem~5.1.2]{NNS1}. 
Assume next that $w < \mcr{\lng}$, and take $i \in I$ such that 
$\pair{w\lambda}{\alpha_{i}^{\vee}} > 0$; 
we see by Lemma~\ref{lem:236} and Remark~\ref{rem:236} that 
$s_{i}w \in \WJu$ and $s_{i}w > w$. 
Hence, by the induction hypothesis, we obtain 
$\gch \Kg_{s_{i}w}^{-}(\lambda) = F_{s_{i}w}(q)$. 
Here, by Proposition~\ref{prop:rec1}, we have 
$\gch \Kg_{w}^{-}(\lambda) = \sT_{i} \gch \Kg_{s_{i}w}^{-}(\lambda)$. 
Also, by Lemma~\ref{lem:F}, we have $F_{w}(q) = \sT_{i}F_{s_{i}w}(q)$. 
From these equalities, we conclude that
\begin{equation*}
\gch \Kg_{w}^{-}(\lambda) = \sT_{i} \gch \Kg_{s_{i}w}^{-}(\lambda) 
= \sT_{i}F_{s_{i}w}(q) = F_{w}(q), 
\end{equation*}
as desired. This completes the proof of the formula \eqref{eq:main1}. 

\begin{rem}
Recall from \eqref{eq:dj3} that 
$\gch V_{w}^{-}(\lambda) 
= \sum_{v \in \WJu_{\ge w}} \gch \Kg_{v}^{-}(\lambda)$ 
for each $w \in \WJu$. 
By the M\"obius inversion formula, together with 
\cite[Corollary~2.7.10]{BB}, we deduce that
\begin{equation*}
\gch \Kg_{w}^{-}(\lambda) = 
\sum_{ v \in \WJu_{\ge w},\,[w,v] \subset \WJu }
(-1)^{\ell(v)-\ell(w)} \gch V_{v}^{-}(\lambda) \quad 
\text{for each $w \in \WJu$}, 
\end{equation*}
where we set $[w,v]:=\bigl\{u \in W \mid w \le u \le v\bigr\}$. 
\end{rem}
%
%
\section{Relations satisfied by the cyclic vector $u_{w} \in \Kg_{w}^{-}(\lambda)$.}
\label{sec:apdx}

Let $\lambda \in P^{+}$, and take $\J=\J_{\lambda}$ as in \eqref{eq:J}. 
For each $w \in W^{\J}$, let $u_{w} \in \Kg_{w}^{-}(\lambda)$ denote
the image of the cyclic vector $v_{w} \in V_{w}^{-}(\lambda)$ 
under the canonical projection $V_{w}^{-}(\lambda) \twoheadrightarrow \Kg_{w}^{-}(\lambda)$. 
Recall from Section~\ref{subsec:liealg} that for $i \in I_{\af}$, 
$F_{i} \in U_{\q}(\Fg_{\af})$ denotes 
the Chevalley generator corresponding to $-\alpha_{i}$. 
%
%
\begin{lem}[cf. {\cite[\S3.1]{FKM}}] \label{lem:rel} \mbox{}
\begin{enu}
\item For each $i \in I$, it holds that
\begin{equation*}
\begin{cases}
F_{i}^{\pair{w\lambda}{\alpha_{i}^{\vee}}}u_{w} = 0 & 
 \text{\rm 
 if $\pair{w\lambda}{\alpha_{i}^{\vee}} > 0$ ($\Leftrightarrow$ 
    $w^{-1}\alpha_{i} \in \Delta^{+} \setminus \DeJ^{+}$)}, \\[1.5mm]
F_{i}u_{w} = 0 & 
 \text{\rm 
 if $\pair{w\lambda}{\alpha_{i}^{\vee}} \le 0$ ($\Leftrightarrow$ 
    $w^{-1}\alpha_{i} \in \Delta^{-} \cup \DeJ$)}.
\end{cases}
\end{equation*}

\item It holds that
\begin{equation*}
\begin{cases}
F_{0}^{-\pair{w\lambda}{\theta^{\vee}}+1}u_{w} = 0 & 
 \text{\rm 
 if $\pair{w\lambda}{\theta^{\vee}} < 0$ ($\Leftrightarrow$ 
    $w^{-1}\theta \in \Delta^{-} \setminus \DeJ^{-}$)}, \\[1.5mm]
F_{0}u_{w} = 0 & 
 \text{\rm 
 if $\pair{w\lambda}{\theta^{\vee}} \ge 0$ ($\Leftrightarrow$ 
    $w^{-1}\theta \in \Delta^{+} \cup \DeJ$)}.
\end{cases}
\end{equation*}

\item It holds that
$U_{\q}^{-}(\Fg_{\af})_{w\alpha+m\delta}\,u_{w}=\{0\}$ 
for all $\alpha \in \Delta^{+}$ and $m \in \BZ$.
\end{enu}
\end{lem}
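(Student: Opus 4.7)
The overall plan is to reduce each relation to a statement in $V(\lambda)$ itself and pass to the quotient: since $u_w$ is the image of the extremal weight vector $v_w$ under the canonical projection $V_w^-(\lambda) \twoheadrightarrow \Kg_w^-(\lambda)$, it suffices to establish either $y \cdot v_w = 0$ in $V(\lambda)$ or $y \cdot v_w \in \sum_{z \in \WJu,\,z > w} V_z^-(\lambda)$.

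For part~(1), fix $i \in I$ and set $n := \pair{w\lambda}{\alpha_i^\vee}$. When $n \le 0$, the defining property of $v_w$ as an extremal weight vector gives $F_i v_w = 0$ directly. When $n > 0$, the equivalent condition $w^{-1}\alpha_i \in \Delta^+ \setminus \DeJ^+$ combined with Lemma~\ref{lem:236} and Remark~\ref{rem:236} ensures $s_i w \in \WJu$, while $w^{-1}\alpha_i \in \Delta^+$ forces $s_i w > w$ in the Bruhat order on $W$; the extremal property then yields $F_i^{(n)} v_w = v_{s_i w} \in V_{s_i w}^-(\lambda)$, which lies in the submodule by which we quotient, so $F_i^{(n)} u_w = 0$ and hence $F_i^n u_w = 0$. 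Part~(2) is entirely analogous with $i = 0$, using $\alpha_0 = \delta - \theta$ and $\pair{w\lambda}{\alpha_0^\vee} = -\pair{w\lambda}{\theta^\vee}$: when $\pair{w\lambda}{\theta^\vee} \ge 0$ one obtains $F_0 v_w = 0$ directly; when $\pair{w\lambda}{\theta^\vee} < 0$ with $n_0 := -\pair{w\lambda}{\theta^\vee}$, the vector $F_0^{(n_0)}v_w = v_{s_0 w}$ is itself extremal of weight $s_0 w\lambda$ satisfying $\pair{s_0 w\lambda}{\alpha_0^\vee} = -n_0 < 0$, so $F_0 v_{s_0 w} = 0$, yielding $F_0^{n_0+1}v_w = 0$ already in $V(\lambda)$ via the divided-power relations.

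For part~(3), the plan is a weight-space analysis carried out in $V(\lambda)$. Any $y \in (U_\q^-)_{w\alpha + m\delta}$ sends $v_w$ into the weight space $V(\lambda)_{w(\lambda+\alpha) + m\delta}$, so it suffices to show this weight space vanishes for all $\alpha \in \Delta^+$ and $m \in \BZ$. Using the $W_\af$-action on $\CB(\lambda)$ from~\eqref{eq:W1}, weight multiplicities in $V(\lambda)$ are $W$-invariant (since $W \subset W_\af$ fixes $\delta$), and this reduces the claim to $V(\lambda)_{(\lambda + \alpha) + m\delta} = 0$. Via the crystal isomorphism $\Psi_\lambda : \CB(\lambda) \xrightarrow{\sim} \SLS(\lambda)$ of Theorem~\ref{thm:isom} and formula~\eqref{eq:wt}, every weight of $V(\lambda)$ has classical projection $\fwt(\wt(\pi)) = \sum_u (a_u - a_{u-1})\,\cl(x_u)\lambda$, which is a convex combination of elements of $W\lambda$; hence all classical weights of $V(\lambda)$ lie in the convex hull $\mathrm{Conv}(W\lambda)$, and the problem reduces to verifying $\lambda + \alpha \notin \mathrm{Conv}(W\lambda)$ for every $\alpha \in \Delta^+$.

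This final verification, which I expect to be the only delicate step of part~(3), is immediate from the fact that pairing with any strictly dominant coweight $\xi^\vee$ attains its unique maximum over $W\lambda$ at the dominant representative $\lambda$: we have $\pair{\lambda + \alpha}{\xi^\vee} = \pair{\lambda}{\xi^\vee} + \pair{\alpha}{\xi^\vee} > \pair{\lambda}{\xi^\vee} \ge \pair{x\lambda}{\xi^\vee}$ for every $x \in W$, ruling out $\lambda + \alpha$ from $\mathrm{Conv}(W\lambda)$.
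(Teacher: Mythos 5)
Your proof is correct, and for parts (1) and (2) it is essentially the paper's argument: the case $\pair{w\lambda}{\alpha_{i}^{\vee}}>0$ of (1) rests on $F_{i}^{(n)}v_{w}=v_{s_{i}w}\in V_{s_{i}w}^{-}(\lambda)$ together with $s_{i}w\in \WJu$ and $s_{i}w>w$, so that the vector is killed in the quotient defining $\Kg_{w}^{-}(\lambda)$, while the remaining relations in (1) and (2) hold already in $V(\lambda)$ by the extremality of the family $\{v_{x}\}_{x\in W_{\af}}$; your explicit computation $F_{0}^{n_{0}+1}v_{w}= [n_{0}]!\,F_{0}v_{s_{0}w}=0$ is just an unpacking of the paper's one-line appeal to that extremality. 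The genuine difference is part (3): the paper disposes of it by citing \cite[Theorem~5.3]{Kas02}, whereas you prove the required vanishing $V(\lambda)_{w(\lambda+\alpha)+m\delta}=0$ directly, using the $W$-invariance of weight multiplicities, the realization $\CB(\lambda)\cong\SLS(\lambda)$ of Theorem~\ref{thm:isom} together with \eqref{eq:wt} (so that every classical weight of $V(\lambda)$ is a convex combination of elements of $W\lambda$), and the exclusion of $\lambda+\alpha$ from $\mathrm{Conv}(W\lambda)$ by pairing with a strictly dominant coweight. This alternative is sound: it makes the appendix self-contained modulo results already quoted in the body of the paper (the path-model isomorphism), at the cost of a longer argument, while the paper's route is shorter but leans on an external theorem of Kashiwara. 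I see no gaps; the only points worth keeping explicit are that the quotient map is $U_{\q}^{-}(\Fg_{\af})$-linear (so relations verified on $v_{w}$ descend to $u_{w}$) and that passing from the divided power $F_{i}^{(n)}$ to the ordinary power $F_{i}^{n}$ costs only a nonzero $q$-integer factor, both of which you handle.
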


\begin{proof}
The first equality of part (1) follows from the fact that 
$F_{i}^{(\pair{w\lambda}{\alpha_{i}^{\vee}})}v_{w}$ is identical to 
$v_{s_{i}w} \in V_{s_{i}w}^{-}(\lambda)$; notice that 
$s_{i}w \in \WJu$ and $s_{i}w > w$ (see \cite[Lemma~5.9]{LNSSS}). 
The second equality of part (1) and part (2) follow from 
the fact that $v_{w}$ is an extremal weight vector of 
weight $w\lambda$. Part (3) follows from \cite[Theorem~5.3]{Kas02}. 
\end{proof}
%
%
{\small
 \setlength{\baselineskip}{12pt}

}

\end{document}